\documentclass[10pt]{amsproc}
\newtheorem{theorem}{\sc Theorem}[section]
\newtheorem{lemma}[theorem]{\sc Lemma}
\newtheorem{proposition}[theorem]{\sc Proposition}

\newcommand{\U}{\mathcal{U}}
\newcommand{\F}{\Delta (G)} 

\newcommand{\y}{z}
\newcommand{\ol}{\overline}
\usepackage{MnSymbol}



\begin{document}

\title[Centralizers of commutators]{Commutators,  centralizers, and strong conciseness in profinite groups}
\thanks{The first and second authors are members of GNSAGA (INDAM), 
and the third author was  supported by  FAPDF and CNPq.}

\author{Eloisa Detomi}
\address{Dipartimento di Ingegneria dell'Informazione - DEI, Universit\`a di Padova, Via G. Gradenigo 6/B, 35121 Padova, Italy} 
\email{eloisa.detomi@unipd.it}
\author{Marta Morigi}
\address{Dipartimento di Matematica, Universit\`a di Bologna\\
Piazza di Porta San Donato 5 \\ 40126 Bologna \\ Italy}
\email{marta.morigi@unibo.it}
\author{Pavel Shumyatsky}
\address{Department of Mathematics, University of Brasilia\\
Brasilia-DF \\ 70910-900 Brazil}
\email{pavel@unb.br}

\subjclass[2010]
{20E45, 20F24, 20E18} 
\keywords{Commutators, centralizers, conciseness}

\begin{abstract} 
{A group $G$ is said to have restricted centralizers if for each $g \in G$ the centralizer $C_G(g)$ either is finite or has finite index in $G$. Shalev showed that a profinite group with restricted centralizers is virtually abelian. We take interest in profinite groups with restricted centralizers of uniform  commutators, that is, elements of the form $[x_1,\dots,x_k]$, where $\pi(x_1)=\pi(x_2)=\dots=\pi(x_k)$. Here $\pi(x)$ denotes the set of prime divisors of the order of $x\in G$. It is shown that such a group necessarily has an open nilpotent subgroup. We use this result to deduce that $\gamma_k(G)$ is finite if and only if the cardinality of the set of uniform $k$-step commutators in $G$ is less than $2^{\aleph_0}$.}
\end{abstract}
\maketitle

\section{Introduction} 
A group $G$ is said to have restricted centralizers if for each $g \in G$ the centralizer $C_G(g)$ either is finite or has finite index in $G$. This notion was introduced by Shalev in \cite{shalev} where he showed that a profinite group with restricted centralizers is virtually abelian. As usual, we say that a profinite group has a property virtually if it has an open subgroup with that property. Recently, profinite groups with restricted centralizers of some specific elements were considered in \cite{as, DMSrestricted}.
The article \cite{DMSrestricted} handles profinite groups with restricted centralizers of $w$-values for a multilinear commutator word $w$. The theorem proved in \cite{DMSrestricted} says that if $w$ is a multilinear commutator word and $G$ is a profinite group in which the centralizer of any $w$-value is either finite or open, then the verbal subgroup $w(G)$ generated by all $w$-values is virtually abelian. 
Recall that the  lower central words $\gamma_k$ are  recursively defined by $$\gamma_1=x_1,\qquad \gamma_k=[\gamma_{k-1}(x_1,\ldots,x_{k-1}),x_k].$$ Of course  $\gamma_k(G)$ is the $k$-th  term of the lower central series of $G$. Thus, if the $\gamma_k$-values have restricted centralizers in a profinite group $G$, then $\gamma_k(G)$ is virtually abelian.

 In this paper we will show that if the $\gamma_k$-values have restricted centralizers, then the group $G$ is virtually nilpotent. In fact, we will establish a stronger result.

If $G$ is a profinite group, then $|G|$ denotes its order, which is a  Steinitz number, and $\pi(G)$ denotes the set of prime divisors of $|G|$. Similarly, if $g$ is an element of $G$, then $|g|$ and $\pi(g)$ respectively  denote the order of the procyclic subgroup generated by $g$ and the set of prime divisors of $|g|$.

We will say that an element $g$ of a profinite group $G$ is a {\it uniform $k$-step commutator } (u$_k$-commutator for short) if there are elements $x_1,x_2,\ldots,x_k\in G$ such that $\pi(x_1)=\dots=\pi(x_k)$ and $g=[x_1,x_2,\ldots,x_k]$.
 When $k=2$,  the element $g$ will be referred to simply as a {\it uniform commutator} (such elements were called anti-coprime commutators in \cite{dms2,dms}).

\begin{theorem}\label{main} Let  $G$ be a profinite group in which the centralizers of uniform $k$-step commutators  are either finite or open. Then $G$ is virtually nilpotent and $\gamma_k(G)$ is virtually abelian.
\end{theorem}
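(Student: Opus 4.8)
The plan is to localize the hypothesis at each prime, settle the pro-$p$ case, and then reassemble a global open nilpotent subgroup. The starting observation is that inside any pro-$p$ subgroup the u$_k$-commutators are exactly the $\gamma_k$-values: if $x_1,\dots,x_k$ are nontrivial $p$-elements, then $\pi(x_1)=\dots=\pi(x_k)=\{p\}$, so $[x_1,\dots,x_k]$ is automatically uniform. Hence for every Sylow $p$-subgroup $P$ of $G$ the hypothesis is inherited --- each $\gamma_k$-value $g$ of $P$ has centralizer $C_P(g)=C_G(g)\cap P$ finite or open in $P$ --- and the theorem of \cite{DMSrestricted}, applied to the word $\gamma_k$ inside $P$, already gives that $\gamma_k(P)$ is virtually abelian.

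The first genuine task is to upgrade this to the \emph{pro-$p$ statement}: every such $P$ has an open subgroup $Q$ with $\gamma_{k+1}(Q)=1$, so of nilpotency class at most $k$. That the restricted-centralizer hypothesis is essential here (and not merely virtual abelianness of $\gamma_k(P)$) is shown by the semidirect product of two copies of the $p$-adic integers in which one factor acts on the other through a nontrivial one-unit: there $\gamma_k$ is abelian yet the group is not virtually nilpotent, and, consistently, one of its $\gamma_k$-values has a centralizer that is neither finite nor open. I would prove the pro-$p$ statement by a Baire-category argument. Writing $P=\bigcup_n P_n$ with $P_n$ the closed set of elements having at most $n$ conjugates, some $P_n$ has nonempty interior; feeding this into the finite-or-open dichotomy for centralizers of $\gamma_k$-values reduces matters to the case where, on some open subgroup, every $\gamma_k$-value has open centralizer and in fact boundedly many conjugates. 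A B.\,H.\ Neumann (BFC) type argument then makes $\gamma_k$ central on a further open subgroup $Q$, whence $\gamma_{k+1}(Q)=[\gamma_k(Q),Q]=1$; this last implication, from ``$\gamma_k$-values are FC-elements'' to bounded nilpotency class in the pro-$p$ setting, is the technical heart of the local analysis.

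Next I would globalize. The target is an open subgroup $H$ of $G$ that is pronilpotent, $H=\prod_p H_p$ with each $H_p$ nilpotent of class at most $k$; since the bound is uniform in $p$ we then have $\gamma_{k+1}(H)=\prod_p\gamma_{k+1}(H_p)=1$, and $H$ is nilpotent of class at most $k$. Two features of the hypothesis must be used in full. First, distinct Sylow subgroups of $G$ must virtually centralize one another inside $H$; here the relevant information comes from restricted centralizers of \emph{mixed-prime} uniform commutators, that is values $[x_1,\dots,x_k]$ with $\pi(x_i)=\{p,q\}$, which together with Shalev's theorem \cite{shalev} tame the cross-prime actions. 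Second, the local class bound of the previous step must be rendered independent of $p$, which is exactly where the parameter $k$ must be shown to control the class uniformly.

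Finally, since $H$ has class at most $k$ we have $\gamma_k(H)\le Z(H)$, so $\gamma_k(H)=\prod_p\gamma_k(H_p)$ is abelian; choosing $H$ open and normal, a bounded-index argument using that $G/H$ is finite --- and a further appeal to the finite/open dichotomy across the finitely many cosets of $H$ --- transfers this to virtual abelianness of $\gamma_k(G)$, completing both conclusions. I expect the principal obstacle to be precisely the globalization of the third paragraph: extracting \emph{uniform}, prime-independent class bounds and, above all, controlling the interaction between distinct primes, about which the hypothesis speaks only through the mixed-prime uniform commutators and must therefore be exploited indirectly via the virtual-abelianness theorem of Shalev.
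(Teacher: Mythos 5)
Your proposal has two genuine gaps, and the first one is fatal already at the local step. In the second paragraph you claim that, once Baire category gives an open subgroup on which every $\gamma_k$-value has at most $m$ conjugates, ``a B.\,H.\ Neumann (BFC) type argument then makes $\gamma_k$ central on a further open subgroup $Q$, whence $\gamma_{k+1}(Q)=1$.'' This does not follow: having each $\gamma_k$-value of $Q$ with centralizer of index at most $m$ does not produce an open subgroup centralizing all of them, because a profinite group that is not finitely generated can have infinitely many open subgroups of index at most $m$ whose intersection is not open (e.g.\ the index-$p$ subgroups of $(\mathbb{Z}/p\mathbb{Z})^{\aleph_0}$ intersect trivially). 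What the bounded-conjugacy-class hypothesis actually yields is the theorem of \cite{S-BFC} (Theorem \ref{dadada} above): a nilpotent subgroup of $(k,m)$-bounded index \emph{and $(k,m)$-bounded class}, where $m$ is the Baire-category constant and is not controlled by $k$. Consequently your claimed uniform class bound (at most $k$, independent of $p$ and of $m$) is unavailable; note that the existence of \emph{any} class bound $C=C(k)$ for the open nilpotent subgroup is stated in the paper as an open problem, so an argument establishing it in two lines cannot be right. The correct local conclusion is only ``virtually nilpotent,'' with no uniform class, and this destroys the mechanism of your third and fourth paragraphs, which rest on $\gamma_{k+1}(H_p)=1$ for all $p$ simultaneously.

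The second gap is that the globalization step --- which is the actual content of the theorem --- is not an argument but a placeholder. Shalev's theorem cannot ``tame the cross-prime actions'': its hypothesis is restricted centralizers of \emph{all} elements, and no subgroup of $G$ is identified in which that hypothesis holds; the present hypothesis speaks only about u$_k$-commutators. The paper's route is entirely different and shows what is really needed: (i) when some u$_k$-commutator has finite centralizer, coprime-action arguments (Burnside's normal $p$-complement theorem, Lemma \ref{bul2}, Thompson--Higman) force $G$ to be finite or nilpotent (Proposition \ref{mainFinite}); (ii) when all u$_k$-commutators are $FC$-elements, one needs prosolubility (via nonsoluble length, Lemma \ref{prosol}), system normalizers, and the combinatorial Lemma \ref{idea} expressing $\gamma_k$-values as bounded products of u$_k$-commutators in metapronilpotent groups (Proposition \ref{mainFC}); (iii) in general one splits $\U_k$ according to whether centralizers are open or finite, uses Dade's theorem on Carter subgroups to bound the Fitting height, and inducts on Fitting height through the metapronilpotent case (Proposition \ref{mainfirst}). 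None of this is replaced by your sketch. Finally, your last step also understates the difficulty: knowing $\gamma_k(H)$ is abelian for an open subgroup $H$ does not transfer to $\gamma_k(G)$ by a coset-counting argument, since $\gamma_k(G)$ is generated by values whose entries range over all of $G$; the paper needs the marginal-subgroup/$w^*$-residual machinery (Lemma \ref{ts}, Lemma \ref{concise}, Proposition \ref{X}, Theorem \ref{genN}) precisely to bridge this.
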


We do not know if  there exists a constant, say $C$, depending only on $k$ such that any group $G$
  satisfying the hypothesis of Theorem \ref{main}   has an open nilpotent subgroup of class at most $C$. In the case where $k=2$ the affirmative answer is furnished by the next result. A key tool employed in the proof was established in \cite{ES} using probabilistic arguments.

\begin{theorem}\label{main2} Let  $G$ be a profinite group in which the centralizers of uniform commutators are either finite or open. Then $G$ has an open  subgroup which is  nilpotent of class at most $3$. 
\end{theorem}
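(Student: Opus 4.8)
The plan is to bootstrap from Theorem~\ref{main}, which already supplies an open normal subgroup $N$ of $G$ that is nilpotent, together with the information that $G'$ is virtually abelian. Being a nilpotent profinite group, $N$ is pronilpotent and so decomposes as a Cartesian product $N=\prod_p N_p$ of its Sylow pro-$p$ subgroups. The decisive observation is that any two nontrivial elements $x,y$ lying in the same factor $N_p$ satisfy $\pi(x)=\pi(y)=\{p\}$, so every commutator $[x,y]$ with $x,y\in N_p$ is a uniform commutator; consequently the hypothesis on centralizers applies to \emph{all} commutators internal to a single Sylow subgroup. Since the nilpotency class of $N$ is the supremum of the classes of the $N_p$, and since an open subgroup of $N$ has the form $\prod_p M_p$ with $M_p=N_p$ for all but finitely many $p$, it suffices to prove a uniform local statement: for all but finitely many primes $p$ the group $N_p$ is nilpotent of class at most $3$, while each of the remaining finitely many $N_p$ has an open subgroup of class at most $3$.

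This reduces the problem to the study of a single nilpotent pro-$p$ group $P$ in which every commutator has finite or open centralizer. First I would neutralize the finite-centralizer alternative: in a nilpotent group the centre lies inside every centralizer, so a commutator with finite centralizer forces $Z(P)$ to be finite, and I would argue that the set of pairs $(x,y)\in P\times P$ whose commutator $[x,y]$ has \emph{open} centralizer carries positive Haar measure. This is precisely the step that converts the finite-or-open dichotomy of the hypothesis into the positive-measure input demanded by the probabilistic machinery, after which $[x,y]$ lies in the FC-centre of $P$ for a set of pairs of positive measure.

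At this point I would invoke the probabilistic result of \cite{ES}: a positive-measure family of pairs whose commutators satisfy the relevant centralizing condition forces $P$ to possess an open subgroup obeying a $2$-Engel law. The absolute constant $3$ then comes for free from Levi's theorem, by which every $2$-Engel group is nilpotent of class at most $3$; crucially, this bound does not depend on $p$. Reassembling the resulting local subgroups into $\prod_p M_p$ yields the desired open subgroup of $G$ of class at most $3$.

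The main obstacle I anticipate is twofold. The first difficulty is the measure-theoretic reduction of the preceding paragraph: one must rule out that finite-centralizer commutators occupy a non-negligible portion of $P\times P$, which is where the nilpotency furnished by Theorem~\ref{main} and a careful analysis of $Z(P)$ become essential. The second and more delicate difficulty is uniformity in $p$: the input from \cite{ES} must be deployed so that the exceptional set of primes (those for which $N_p$ fails to have class at most $3$ outright) is \emph{finite}, since only then can the local subgroups $M_p$ be glued into a genuinely open subgroup of $G$.
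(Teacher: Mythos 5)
Your first step is sound and matches the paper: by the virtual nilpotency part of Theorem~\ref{main} one may pass to an open nilpotent subgroup, in which every commutator is a uniform commutator, and the observation that an infinite nilpotent profinite group has infinite centre (Lemma~\ref{center}) disposes of the finite-centralizer alternative. But from there your argument has a genuine gap in the step where you invoke \cite{ES}. The result of \cite{ES} used here (Theorem~\ref{ESgamma}) requires a \emph{uniform} bound $m$ on $|x^G|$ for \emph{all} commutators $x$, and its conclusion is a subgroup of $m$-bounded index of class at most $4$ with $\gamma_4$ of $m$-bounded order; the constant $3$ is then obtained by intersecting with an open subgroup meeting the finite $\gamma_4$ trivially, not via a $2$-Engel law and Levi's theorem. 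Your proposed input --- ``a positive-measure set of pairs whose commutator has open centralizer'' --- is not a hypothesis that this (or any stated) result of \cite{ES} accepts: ``open centralizer'' carries no uniform bound on the index, positive Haar measure of a closed set does not even yield nonempty interior, and no mechanism in your sketch produces the needed uniform bound. The paper gets it by a Baire category argument on $G\times\cdots\times G$ (the sets $C_j$ of tuples whose commutator has at most $j$ conjugates) followed by the combinatorial Lemma~\ref{comb1}, which upgrades the bound from one open coset to all commutators in an open subgroup $T$; this conversion step is missing from your outline and cannot be skipped.

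The second, independent, gap is the Sylow-by-Sylow reduction, which you yourself flag but do not resolve. Treating each $N_p$ separately, any Baire/probabilistic argument produces bounds $m_p$ and hence open subgroups $M_p\le N_p$ whose indices depend on $p$; nothing in your plan forces $M_p=N_p$ (equivalently, $N_p$ of class at most $3$ outright) for all but finitely many $p$, and without that the product $\prod_p M_p$ has infinite index and is not open. This uniformity is exactly what the global argument supplies for free: since in a pronilpotent group every commutator is uniform, one applies Proposition~\ref{maingamma} (Lemma~\ref{openT}, then Proposition~\ref{profinite-FC} with a single Baire-category constant $m$ valid for the whole open subgroup) to the nilpotent open subgroup at once, never decomposing by primes. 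As written, your local-to-global strategy would need a new idea to control the exceptional set of primes, and the probabilistic statement you lean on is not available; both halves of the second part of your argument therefore fail.
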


A somewhat unexpected by-product of the proof of Theorem \ref{main} is related to the concept of strong conciseness in profinite groups introduced in \cite{dks}. A group-word $w$ is strongly concise if  the verbal subgroup $w(G)$ is finite in any profinite group $G$  in which $w$ takes less than   $2^{\aleph_0}$ values. A number of     recent results on strong conciseness of group-words can be found in \cite{dks, joao, KS-strong}. The concept of strong conciseness can be applied in a wider context:  suppose $\varphi(G)$ is a  subset that can be naturally defined in every profinite group $G$, then one can ask whether the subgroup generated by $\varphi(G)$ is finite whenever  $|\varphi(G)| < 2^{\aleph_0}$. 

It was shown in \cite{dms} that this is the case if $\varphi(G)$ is the set of u$_2$-commutators; that is, a profinite group $G$ has finite commutator subgroup $G'$ if and only if the cardinality of the set of u$_2$-commutators in $G$ is less than $2^{\aleph_0}$. In view of this, it was natural to ask whether finite-by-nilpotent profinite groups  
admit a similar characterisation. Now we are able to answer the question in the affirmative.

\begin{theorem}\label{strong} Let $k\geq1$, and let $G$ be a profinite group. Then $\gamma_k(G)$ is finite if and only if the cardinality of the set of uniform $k$-step commutators  in $G$ is less than $2^{\aleph_0}$. 
\end{theorem}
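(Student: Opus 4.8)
The forward implication is immediate: every u$_k$-commutator is a $\gamma_k$-value and hence lies in $\gamma_k(G)$, so if $\gamma_k(G)$ is finite then the set $U$ of u$_k$-commutators in $G$ is finite, and in particular $|U|<2^{\aleph_0}$. For the converse I would assume $|U|<2^{\aleph_0}$ and aim to prove that $\gamma_k(G)$ is finite. The starting point is the standard dichotomy for closed subgroups of a profinite group: a closed subgroup is either open (of finite index) or of index at least $2^{\aleph_0}$, because an infinite profinite coset space has cardinality at least the continuum. Since conjugation preserves the order of an element, $\pi(x_i^h)=\pi(x_i)$, so the full conjugacy class of any u$_k$-commutator again consists of u$_k$-commutators; thus $|g^G|\le|U|<2^{\aleph_0}$ for every $g\in U$. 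As $|g^G|=|G:C_G(g)|$, the dichotomy forces $C_G(g)$ to be open for every u$_k$-commutator $g$. In particular the hypothesis of Theorem~\ref{main} is satisfied, and I conclude that $G$ is virtually nilpotent and that $\gamma_k(G)$ is virtually abelian.

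It remains to upgrade ``virtually abelian'' to ``finite'' for $\gamma_k(G)$, and the difficulty is that an arbitrary $\gamma_k$-value $[y_1,\dots,y_k]$ need not be uniform, so the small set $U$ controls $\gamma_k(G)$ only once the prime-by-prime structure is made explicit. The plan is to reduce to the pronilpotent setting: using the open nilpotent normal subgroup supplied by Theorem~\ref{main}, I would pass to the Sylow decomposition of a pronilpotent open normal subgroup $N=\prod_p N_p$ and analyse $\gamma_k$ through the commuting $p$-components. The key point is that a commutator of nontrivial $p$-elements is automatically uniform, since every nontrivial element of a pro-$p$ group has prime set $\{p\}$: for $p$-elements $x_1,\dots,x_k$ one has $\pi(x_1)=\dots=\pi(x_k)=\{p\}$, so all such commutators belong to $U$, and these generate $\gamma_k(N_p)$. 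The finite quotient $G/N$ must then be absorbed, relating $\gamma_k(G)$ to $\gamma_k(N)=\prod_p\gamma_k(N_p)$.

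The first half of the finiteness argument is a ``many primes yield the continuum'' count. Suppose that for infinitely many primes $p$ there exist $p$-elements producing a nontrivial value $c_p\in\gamma_k(N_p)$. For each subset $S$ of this infinite set of primes, assembling the corresponding $p$-components into elements $x_1,\dots,x_k$ with $\pi(x_1)=\dots=\pi(x_k)=S$ yields a u$_k$-commutator equal to $\prod_{p\in S}c_p$; since the $c_p$ lie in distinct direct factors, distinct subsets $S$ give distinct u$_k$-commutators, producing $2^{\aleph_0}$ of them and contradicting $|U|<2^{\aleph_0}$. Hence only finitely many primes are involved, and $\gamma_k(G)$ is concentrated on a finite set of primes.

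The main obstacle is the single-prime case: one must show that if $G$ is virtually pro-$p$ and $\gamma_k(G)$ is infinite, then $G$ already possesses $2^{\aleph_0}$ uniform $k$-step commutators. Here the uniformity constraint is essentially vacuous, so u$_k$-commutators coincide with ordinary $\gamma_k$-values with nontrivial entries; what is needed is exactly the strong conciseness of the word $\gamma_k$ on pro-$p$ groups, which I would invoke from \cite{dks} (the case $k=2$ being contained in \cite{dms}). Alternatively, and this is the route I expect to be the crux, one can exploit the extra information that $\gamma_k(G)$ is virtually abelian: an infinite abelian open $G$-invariant subgroup $A\le\gamma_k(G)$ carries a nontrivial profinite module action, and by choosing continuum-many distinct exponent patterns among commuting generators one manufactures continuum-many distinct values in $A$, each realised by a uniform commutator. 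Combining the finitely-many-primes reduction with this single-prime finiteness shows that $\gamma_k(G)$ is finite, completing the proof.
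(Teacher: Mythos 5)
Your forward implication and the first half of the converse are correct: conjugates of u$_k$-commutators are again u$_k$-commutators, so every u$_k$-commutator has conjugacy class of size less than $2^{\aleph_0}$ and hence open centralizer, and Theorem \ref{main} then gives that $G$ is virtually nilpotent with $\gamma_k(G)$ virtually abelian (the paper reaches the same point via Lemma \ref{conjfinite} and Proposition \ref{mainFC}). Your ``many primes yield the continuum'' count inside an open normal pronilpotent subgroup $N=\prod_p N_p$ is also a valid argument. The genuine gap is the final step, where you must pass from information about $\gamma_k(N)$ to finiteness of $\gamma_k(G)$. First, $\gamma_k(G)$ is not controlled by $\gamma_k(N)$ together with the finite quotient $G/N$: for instance, take $p$ odd, let $N$ be the (procyclic, abelian) group of $p$-adic integers and let $G$ be the semidirect product of $N$ with a group $\langle t\rangle$ of order $2$ acting by inversion; then $\gamma_k(N)=1$ while $\gamma_k(G)$ is infinite. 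The problematic $\gamma_k$-values are the ``mixed'' ones with entries outside $N$, and your sketch never treats them. Second, your claim that in a virtually pro-$p$ group ``the uniformity constraint is essentially vacuous'' is false: uniformity is vacuous in pronilpotent groups, but in a virtually pro-$p$ group elements outside the pro-$p$ part may have orders divisible by other primes (as with $t$ above, of order $2$, against $\pi(a)=\{p\}$ for $a\in N$), so a $\gamma_k$-value need not be a u$_k$-commutator, the number of $\gamma_k$-values is not bounded by the number of u$_k$-commutators, and strong conciseness of $\gamma_k$ from \cite{dks} cannot be invoked directly. Your fallback (``choosing continuum-many distinct exponent patterns among commuting generators'') is a hope, not an argument.

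This missing step is exactly where the paper does its real work, and it is worth seeing why some such argument is indispensable. The paper first proves (Lemmas \ref{idea} and \ref{conj}) that in a group of Fitting height at most $2$ every $\gamma_k$-value is a product of $k$-boundedly many u$_k$-commutators; it applies this to the subgroups $N\langle g\rangle$, where $g$ runs over a transversal of $N$ in $G$ (each such subgroup is metapronilpotent), so the set of $\gamma_k$-values of $N\langle g\rangle$ still has cardinality less than $2^{\aleph_0}$ and strong conciseness of multilinear commutator words yields that each $\gamma_k(N\langle g\rangle)$ is finite. After factoring out the finitely many finite normal closures of these subgroups, every element of $N$ is a right Engel element of $G$, so in every finite quotient $\bar G$ the image of $N$ lies in $Z_\infty(\bar G)$; a Baer-type theorem then makes $\gamma_\infty(G)$ finite, and only after passing to the pronilpotent quotient $G/\gamma_\infty(G)$ does uniformity genuinely become vacuous so that strong conciseness finishes the proof. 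In short: your reduction to Sylow components of $N$ handles the part of $\gamma_k(G)$ that was never the difficulty, while the commutators mixing $N$ with coset representatives --- the actual content of the theorem beyond Theorem \ref{main} --- are left unaddressed.
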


Note that   in Theorem \ref{strong} 
 the order of $\gamma_k(G)$ is bounded in terms of the number of u$_k$-commutators (see Proposition \ref{boundedly}). 
It would be interesting to see if also profinite groups $G$ in which the $k$-th term  of the derived series is finite can be characterised in the same spirit. For now this remains an open problem.

\section{Preliminaries}

Results on finite groups often admit a
natural interpretation for profinite groups (see, for example, \cite{rib-zal} or \cite{wilson}). Throughout the paper we use certain profinite versions of facts on finite groups without explaining in detail how these can be deduced from the corresponding finite cases. On all such occasions the deduction can be performed via the routine inverse limit argument. Every homomorphism of profinite groups considered in this paper is continuous, and every subgroup of a profinite group is closed, unless otherwise specified.

If $x$ is an element of a group $G$, we write $x^G$ for the conjugacy class of $x$ in $G$.  On the other hand, if $K$ is a subgroup of $G$, then $K^G$ denotes the normal closure of $K$ in $G$, that is, the subgroup generated by all conjugates of $K$ in $G$, with the usual convention that if $G$ is a topological group then $K^G$ is a closed subgroup.

We will denote by $\F $  the set of $FC$-elements of $G$, i.e. 
$$\F =\{ x\in G \mid |x^G| < \infty\}.$$ 
Obviously  $\F$ is a normal abstract subgroup of $G$. Note that if $G$ is a profinite group, $\F$ does not need to be closed. 

 Recall that a pro-$p$ group is an inverse limit of finite $p$-groups, a pro-$\pi$ group is an inverse limit of finite $\pi$-groups, a pronilpotent group is an inverse limit of finite nilpotent groups, a prosoluble group is an inverse limit of finite soluble groups. 
 
It is well-known that a finite-by-prosoluble group is virtually prosoluble,  and a  profinite group $G$ that is an extension of a prosoluble group $N$ by
a prosoluble group $G/N$ is prosoluble (see e.g. \cite[Lemma 2.2]{KS}).

If $G$ is a profinite group and $\pi$ a set of primes,  $O_{\pi} (G)$ stands for  the maximal normal pro-$\pi$ subgroup of $G$. 

Recall that $\gamma_\infty(G)$ stands for the intersection of the terms of the lower central series of a group $G$. A profinite group $G$ is pronilpotent if and only if $\gamma_\infty(G)=1$. Each profinite group $G$ has a maximal pronilpotent normal subgroup, its Fitting subgroup $F(G)$. Set $F_0(G)=1$ and $F_{i+1}(G)/F_i(G)=F(G/F_i(G))$ for every integer $i\ge 0$. We say that the profinite group $G$ has (finite) Fitting height $h=h(G)$ if $G=F_h(G)$ and $h$ is the least integer with this property. Obviously, $G$ has finite Fitting height at most $h$ if, and only if, $G$ is an inverse limit of finite soluble groups of Fitting height at most $h$. A profinite group $G$
 is metapronilpotent if and only if it has Fitting height at most $2$ or, equivalently, if and only if $\gamma_\infty (G)$ is pronilpotent. As usual, $Z(G)$ denotes the centre of the group $G$.

We mention the following result about nilpotent profinite groups. 
\begin{lemma}\label{center} 
Let  $G$ be an infinite nilpotent profinite group. Then $Z(G)$ is infinite. 
\end{lemma}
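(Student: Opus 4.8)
The plan is to induct on the nilpotency class $c$ of $G$ (recall that a nilpotent profinite group has, by definition, some finite class). If $c\le 1$ then $G$ is abelian and $Z(G)=G$ is infinite, so assume $c\ge 2$ and, aiming at a contradiction, suppose that $Z:=Z(G)$ is finite. Then $Z$ is a finite normal (closed) subgroup, so $G/Z$ is an infinite profinite group that is nilpotent of class at most $c-1$. By the inductive hypothesis $Z(G/Z)$ is infinite; since $Z(G/Z)=Z_2(G)/Z$, where $Z_2(G)$ is the second centre, the subgroup $Z_2:=Z_2(G)$ is infinite. Moreover $Z_2$ is closed and $[Z_2,Z_2]\le[Z_2,G]\le Z$, so $P:=Z_2/Z$ is an \emph{infinite abelian profinite group}.

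Next I would exploit the commutator map. Because $[Z_2,G]\le Z$ and $Z$ is central, the assignment $\beta(zZ,g)=[z,g]$ gives a well-defined continuous map $\beta\colon P\times G\to Z$ which is a homomorphism in each variable separately: indeed $[z_1z_2,g]=[z_1,g]^{z_2}[z_2,g]=[z_1,g][z_2,g]$ and $[z,g_1g_2]=[z,g_2][z,g_1]^{g_2}=[z,g_2][z,g_1]$, the conjugations being trivial since the values lie in $Z$. Each partial map $[z,-]\colon G\to Z$ has open kernel $C_G(z)$, so it is continuous with values in the finite abelian group $Z$. The left radical of $\beta$ is $Z_2\cap Z(G)=Z$, i.e. it is trivial on $P$; factoring out the right radical $C_G(Z_2)$ and putting $D:=G/C_G(Z_2)$, I obtain a continuous pairing $P\times D\to Z$ that is \emph{nondegenerate on both sides}, with $Z$ finite and $P$ infinite profinite.

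The final step is to show such a pairing cannot exist, which will contradict the infinitude of $P$. Nondegeneracy yields injective homomorphisms $P\hookrightarrow \mathrm{Hom}_{\mathrm{cont}}(D,Z)$ and $D\hookrightarrow \mathrm{Hom}_{\mathrm{cont}}(P,Z)$ (the latter forcing $D$ abelian, as it should be). If $D$ is finite then $\mathrm{Hom}_{\mathrm{cont}}(D,Z)$ is finite, whence $P$ is finite, a contradiction. If $D$ is infinite I would invoke the cardinality estimate, valid for every infinite abelian profinite group $Q$ and finite $A$, that $|\mathrm{Hom}_{\mathrm{cont}}(Q,A)|<|Q|$: via Pontryagin duality $\mathrm{Hom}_{\mathrm{cont}}(Q,A)\cong\mathrm{Hom}(\widehat A,\widehat Q)$, so its cardinality is at most $|\widehat Q|$, while $|Q|=2^{|\widehat Q|}$ for infinite discrete $\widehat Q$. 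Applying this on both sides gives $|P|\le|\mathrm{Hom}_{\mathrm{cont}}(D,Z)|<|D|\le|\mathrm{Hom}_{\mathrm{cont}}(P,Z)|<|P|$, the desired contradiction. Hence $Z(G)$ cannot be finite, completing the induction.

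I expect the main obstacle to be precisely this last point: the naive "extraspecial" intuition suggests that an infinite group of class $2$ might have a tiny centre, and the only thing that rules this out is that a \emph{continuous} nondegenerate alternating pairing on an infinite profinite group into a finite group is impossible. Making that step rigorous — rather than the routine inductive bookkeeping — is where the real content lies, and the cleanest justification I know runs through the Pontryagin-duality cardinality inequality above; an alternative would be to argue directly that the continuous injection $P\hookrightarrow\prod_{d\in D}Z$ given by $x\mapsto(\beta(x,d))_d$ has compact image forcing strong finiteness constraints, but the duality route seems the most transparent.
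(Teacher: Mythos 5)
Your proof is correct, but it is a genuinely different --- and far heavier --- argument than the one in the paper. The paper's proof is three lines: if $Z(G)$ were finite, then, since a profinite group has a base of open normal subgroups, one could choose an open normal subgroup $K$ with $K\cap Z(G)=1$; because in a nilpotent group every nontrivial normal subgroup meets the centre nontrivially, this forces $K=1$, so $G$ is discrete and compact, hence finite, a contradiction. That elementary fact about nilpotent groups is precisely what your final paragraph overlooks: it rules out the ``infinite group with tiny centre'' scenario at once, with no induction on the class, no pairing, and no duality. Your route instead reduces, via induction and the section $Z_2(G)/Z(G)$, to showing that two infinite abelian profinite groups cannot admit a pairing into a finite group, continuous in each variable and nondegenerate on both sides; your verification of this via Pontryagin duality together with the cardinality theorem ($|Q|=2^{w(Q)}$ for an infinite compact group $Q$, whence $|\mathrm{Hom}_{\mathrm{cont}}(Q,Z)|<|Q|$) is sound, and the supporting steps --- bilinearity of the commutator map modulo the centre, openness of the centralizers $C_G(z)$ for $z\in Z_2(G)$, and the case split on whether $D=G/C_G(Z_2(G))$ is finite --- are all correct. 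What your approach buys is an explicit identification of the duality phenomenon at work, and it genuinely needs both sides to be profinite: evaluation gives a nondegenerate pairing of an infinite Cartesian power of a cyclic group of order $p$ against the corresponding \emph{discrete} direct sum, so compactness of both $P$ and $D$ is essential. What it costs is reliance on the cardinality theorem for infinite compact groups, a substantially deeper input than anything this lemma actually requires.
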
 
\begin{proof} Assume that  $Z(G)$ is finite. Then there exists an open normal subgroup $K$ of $G$ such that $K \cap Z(G)=1$. Since $G$ is nilpotent, this implies that $K=1$ and so $G$ is finite, a contradiction.
\end{proof}

Profinite groups have Sylow $p$-subgroups and satisfy analogues of the Sylow theorems. 
Any prosoluble group $G$ has a Sylow basis (a family of pairwise permutable
 Sylow $p_i$-subgroups $P_i$ of $G$, exactly one for each prime $p_i \in \pi(G)$), and any two Sylow bases are conjugate (see \cite[Proposition 2.3.9]{rib-zal}). The basis normalizer (also known as the system normalizer) of such a Sylow basis in $G$ is $T =\bigcap_{i} N_G(P_i).$ If $G$ is a prosoluble group and $T$ is a basis normalizer in $G,$ then $T$ is pronilpotent and  $G = \gamma_\infty (G) T$ (see \cite[Lemma 5.6]{reid}). 

The set of uniform $k$-step commutators of $G$ will be denoted by $\U_k(G)$  and we write $\U_k=\U_k(G)$ when it is clear which group we are referring  to. Remark that $\U_k$ is symmetric, that is, it is closed under taking inverses, since $[x,y]^{-1}=[x^y, y^{-1}]$. Moreover, $\U_k(G/N)=\U_k(G)N/N$ whenever $N$ is a normal subgroup of $G$. 

If $x,y$ are elements of a group $G$ and $k$ is a positive integer, $[x,{}_k y]$ is recursively defined by $[x,{}_1y]=[x,y]$ and $[x,{}_{k+1}y]=
[[x,{}_k y],y]$ for every $k\ge 1$. We will often use the fact that if $G$ is profinite, then the element $[x,y,y]=[y^{-xy},y]$ is a uniform commutator.
Note that if the profinite group $G$  is pronilpotent, then it is a direct product of its Sylow subgroups and so every $\gamma_k$-value in $G$ is a u$_k$-commutator.

\begin{lemma}\label{gen1}
 Let $G$ be a profinite group.  Then the set $\U_k$ generates $\gamma_k(G)$.
\end{lemma}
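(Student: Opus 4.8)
The plan is to prove the two inclusions separately, the easy one being $\langle\U_k\rangle\le\gamma_k(G)$. First I would note that $\U_k$ is closed under conjugation, since $[x_1,\dots,x_k]^g=[x_1^g,\dots,x_k^g]$ and conjugation preserves the order, hence the set $\pi$, of each entry; thus $H:=\overline{\langle\U_k\rangle}$ is a closed normal subgroup, and obviously $H\le\gamma_k(G)$ because every element of $\U_k$ is a $\gamma_k$-value. For the reverse inclusion I would reduce to finite groups by the usual inverse limit argument: since $\gamma_k(G)=\varprojlim\gamma_k(G/N)$ over the open normal subgroups $N$, and $\U_k(G/N)=\U_k(G)N/N$, it suffices to establish $\langle\U_k(\bar G)\rangle=\gamma_k(\bar G)$ for every finite quotient $\bar G=G/N$; as $H$ and $\gamma_k(G)$ are closed and would then have the same image in every finite quotient, this forces $H=\gamma_k(G)$. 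So I may assume from now on that $G$ is finite.

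For finite $G$ I would argue by induction on $|G|$. Setting $H=\langle\U_k\rangle$, which is normal, if $H\ne1$ then every element of $\U_k(G/H)=\U_k(G)H/H$ is trivial, so by induction $\gamma_k(G/H)=\langle\U_k(G/H)\rangle=1$, whence $\gamma_k(G)\le H$ and equality holds. This reduces the whole statement to the assertion that \emph{if every u$_k$-commutator of a finite group $G$ is trivial, then $\gamma_k(G)=1$}. In that situation every proper subgroup $K$ again has all its u$_k$-commutators trivial (a u$_k$-commutator of $K$ is one of $G$), so by induction $\gamma_k(K)=1$ and $K$ is nilpotent; hence $G$ is either nilpotent or a minimal non-nilpotent, i.e.\ a Schmidt, group. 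If $G$ is nilpotent it is the direct product of its Sylow subgroups and $\gamma_k(G)=\prod_p\gamma_k(P_p)$; but each $\gamma_k$-value of the $p$-group $P_p$ has all its entries of $p$-power order, so it is a u$_k$-commutator and therefore trivial, giving $\gamma_k(G)=1$ (this is precisely the finite analogue of the pronilpotent remark preceding the statement).

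It remains to rule out the Schmidt case, which I expect to be the main obstacle. Writing such a group as $G=P\rtimes Q$ with $P=\gamma_\infty(G)\ne1$ the normal Sylow $p$-subgroup and $Q=\langle y\rangle$ a cyclic Sylow $q$-subgroup, I would exploit that $Q$ acts on the chief factor $V=P/\Phi(P)$ as a faithful irreducible module, hence fixed-point-freely, so that $\sigma-1$ is invertible on $V$, where $\sigma$ denotes the action of $y$. A direct computation in $V\rtimes Q$ then shows that for $a,b\in P$ the commutator $[y^a,y^b]$ has image $-\sigma^{-2}(\sigma-1)^2(\bar a-\bar b)$ in $V$, which is non-zero whenever $\bar a\ne\bar b$; and bracketing further with $y$ multiplies the image by the invertible operator $\sigma-1$, so $[y^a,y^b,\underbrace{y,\dots,y}_{k-2}]$ remains non-trivial. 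Since $y^a,y^b,y$ are all $q$-elements, this is a non-trivial u$_k$-commutator, contradicting the hypothesis that $\U_k=\{1\}$. Hence the Schmidt case cannot occur, $G$ is nilpotent, and the proof is complete. The only delicate point is the last computation; everything else is routine once the reduction to the minimal non-nilpotent case is in place.
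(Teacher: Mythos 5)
Your proof is correct, but it takes a genuinely different route from the paper's. The paper's argument is a single short paragraph: the identity $[x,y,y]=[y^{-xy},y]$ shows that every Engel-type commutator $[x,{}_k y]=[y^{-xy},{}_{k-1}y]$ lies in $\U_k$ (all entries are conjugates of $y^{\pm 1}$), so the quotient of $G$ by $\langle\U_k\rangle$ satisfies a $k$-Engel condition; since finite Engel groups are nilpotent (Zorn's theorem), this quotient is pronilpotent, and then the observation that in pronilpotent groups every $\gamma_k$-value is a u$_k$-commutator forces the quotient to be nilpotent of class at most $k-1$. You instead reduce to finite groups, induct on the order, and settle the crucial case ($\U_k=\{1\}$ implies $\gamma_k(G)=1$) by showing that a minimal counterexample would be a Schmidt group, which you exclude by an explicit computation on the chief factor $V=P/\Phi(P)$. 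In effect you have inlined a proof of exactly the fragment of Zorn's theorem that the paper quotes wholesale (the standard proof of Zorn's theorem also goes through minimal non-nilpotent groups), so your argument is more self-contained but considerably longer, whereas the paper's works directly at the profinite level with no reduction to finite quotients. Two small repairs to your write-up, neither of which is a real gap: the action of $Q$ on $V$ need not be faithful, since its kernel contains $\Phi(Q)\le Z(G)$; what you actually need, and what is true, is $C_V(y)=0$, which holds because $C_V(Q)$ is a proper $G$-submodule of the irreducible module $V$ (or by coprimality of the action). Also the operator $-\sigma^{-2}(\sigma-1)^2$ is convention-dependent (with the usual conventions one gets $-(\sigma-1)^2(\bar a-\bar b)$), but in either form it is an invertible operator applied to $\bar a-\bar b$, so your conclusion stands.
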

\begin{proof} If $k=1$, then $\U_k=G$ and we have nothing to prove. Therefore we assume that $k\geq2$. Let $N$ be the subgroup generated by $\U_k$. Obviously, $N\le \gamma_k(G)$ so we only need to show that $\gamma_k(G)\leq N$. Recall that $[x,y,y]=[y^{-xy},y]$ is a uniform commutator for any $x,y\in G$. It follows that if $\bar x=Nx$ and $\bar y=Ny$ are elements of $G/N$, then $[\bar x,_{k} \bar y]=1$, as $[y^{-xy},{}_{k-1}y]\in \U_k$. Since finite Engel groups are nilpotent (see \cite[12.3.4]{rob}), we deduce that $G/N$ is pronilpotent. We mentioned earlier that in a pronilpotent group every $\gamma_k$-value is a u$_k$-commutator. Therefore $G/N$  
 is nilpotent of class at most $k-1$, whence $\gamma_k(G)\le N$.
\end{proof}

If $\phi$  is an automorphism of a finite group $H$ of coprime order, that is, such
that $(|\phi |, |H|) = 1$, then we say for brevity that $\phi$  is a coprime automorphism
of $H.$ This definition is extended to profinite groups as follows. We say that $\phi$  is a
coprime automorphism of a profinite group $H$ meaning that the procyclic group $\langle\phi \rangle$ 
faithfully acts on $H$ by continuous automorphisms and $\pi(\langle\phi \rangle ) \cap \pi(H)=\emptyset$. Since the
semidirect product $H \langle\phi \rangle$ is also a profinite group, $\phi$ is a coprime automorphism of $H$
if and only if for every open $\phi$-invariant normal subgroup $N$ of $H$ the automorphism
(of finite order) induced by $\phi$ on $H/N$ is a coprime automorphism. 
 We will need some profinite equivalent of well-known results about coprime actions in finite groups (see e.g. \cite{KS}). 

\begin{lemma} \label{bul-invariant}
If $\phi$ is a coprime automorphism of a profinite group $G,$ then for every
prime $q \in \pi(G)$ there is a $\phi$-invariant Sylow $q$-subgroup of $G.$ 
\end{lemma}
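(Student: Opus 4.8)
The plan is to reduce to the finite case by the routine inverse limit argument, taking as input the corresponding statement for finite groups: if $A$ is a finite group of automorphisms of a finite group $K$ with $(|A|,|K|)=1$, then for each $q\in\pi(K)$ there is an $A$-invariant Sylow $q$-subgroup of $K$. (This is proved by forming the semidirect product $H=K\rtimes A$, observing that a Sylow $q$-subgroup $Q$ of $K$ is also Sylow in $H$, showing $N_H(Q)G=H$ by comparing indices, and then using the Schur--Zassenhaus theorem to conjugate $A$ into $N_H(Q)$.) I would invoke this as a known fact, as suggested by the reference to \cite{KS}.

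First I would record that the open $\phi$-invariant normal subgroups of $G$ form a base of neighborhoods of the identity. Indeed, in the profinite group $H=G\langle\phi\rangle$ the open normal subgroups form such a base, and if $U$ is an open normal subgroup of $H$, then $U\cap G$ is open in $G$, normal in $H$, and hence $\phi$-invariant; as $U$ ranges over a base of $H$, the subgroups $U\cap G$ range over a base of $G$. Let $\mathcal{N}$ denote the resulting family of open $\phi$-invariant normal subgroups of $G$, directed downward since $N\cap N'\in\mathcal{N}$ for $N,N'\in\mathcal{N}$. For each $N\in\mathcal{N}$ the quotient $G/N$ is finite and, by the characterization of coprime automorphisms recorded before the statement, $\phi$ induces a coprime automorphism of $G/N$. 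The finite result then says that the set $X_N$ of $\phi$-invariant Sylow $q$-subgroups of $G/N$ is finite and non-empty. Whenever $N\subseteq M$ in $\mathcal{N}$, the projection $G/N\to G/M$ carries a $\phi$-invariant Sylow $q$-subgroup to one of the same kind, giving compatible maps $X_N\to X_M$, so that $\{X_N\}$ is an inverse system of non-empty finite sets over a directed index set; hence $\varprojlim X_N\neq\emptyset$.

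A point of $\varprojlim X_N$ then yields open $\phi$-invariant subgroups $Q_N\ge N$ (the preimages in $G$ of the chosen Sylow subgroups) satisfying $Q_NM=Q_M$ whenever $N\subseteq M$. I would set $Q=\bigcap_{N\in\mathcal{N}}Q_N$, a closed $\phi$-invariant subgroup, and claim it is the desired Sylow $q$-subgroup.

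I expect the one genuinely delicate point — the main obstacle — to be verifying that $Q$ really projects onto the chosen Sylow subgroup in each finite quotient, that is, that $QN=Q_N$ for every $N$. This is where compactness does the work rather than the group theory: for $x\in Q_N$ the coherence $Q_MN=Q_N$ (for $M\subseteq N$) forces the sets $xN\cap Q_M$ to be non-empty, closed, and directed downward inside the compact set $xN$, so their intersection $xN\cap Q$ is non-empty and $x\in QN$. Once $QN=Q_N$ is established for all $N\in\mathcal{N}$, and since $\mathcal{N}$ is cofinal among the open normal subgroups of $G$, the subgroup $Q$ is a $\phi$-invariant pro-$q$ subgroup whose image is Sylow in every finite quotient, hence a $\phi$-invariant Sylow $q$-subgroup of $G$, as required.
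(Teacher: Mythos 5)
Your proof is correct and is essentially the paper's own approach: the paper states this lemma without proof, deferring to the ``routine inverse limit argument'' announced in its preliminaries, and your write-up (compatible families of $\phi$-invariant Sylow subgroups in the finite quotients $G/N$ for $N$ in a cofinal directed family of open $\phi$-invariant normal subgroups, non-emptiness of the inverse limit, and the compactness argument giving $QN=Q_N$) is exactly that argument, combined with the standard finite coprime-action fact. One minor slip in your parenthetical sketch of the finite case: the Frattini-argument identity should read $N_H(Q)K=H$ rather than $N_H(Q)G=H$.
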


\begin{lemma}\cite[Lemma 4.6]{KS} \label{bul} Let $\phi$ be a coprime automorphism of a finite nilpotent group $G$. Then the set of elements of the form $[g,\phi]$, where $g\in G$, coincides with the set of  elements of the form $[g,\phi,\phi]$.
\end{lemma}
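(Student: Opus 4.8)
The plan is to reduce the set-equality to a bijectivity statement and then establish that bijectivity through the coset structure of $C_G(\phi)$. Write $D_1=\{[g,\phi]\mid g\in G\}$ and $D_2=\{[g,\phi,\phi]\mid g\in G\}$, and put $C=C_G(\phi)$. The inclusion $D_2\subseteq D_1$ is immediate, since $[g,\phi,\phi]=[[g,\phi],\phi]$ and $[g,\phi]\in G$. Moreover the map $\psi\colon s\mapsto[s,\phi]$ sends $D_1$ into itself and satisfies $\psi(D_1)=D_2$. Hence it suffices to prove that $\psi$ is injective on the finite set $D_1$: injectivity forces $\psi$ to be a bijection of $D_1$, whence $D_2=\psi(D_1)=D_1$.

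To analyse $\psi$ I would first record the elementary identity, valid for any automorphism, that $[g_1,\phi]=[g_2,\phi]$ holds precisely when $g_2g_1^{-1}\in C$; consequently the fibres of $g\mapsto[g,\phi]$ are the cosets $Cg$, and $|D_1|=[G:C]$. Applying the same identity to $s_1,s_2\in D_1$ shows that $\psi(s_1)=\psi(s_2)$ if and only if $s_2s_1^{-1}\in C$, i.e.\ if and only if $s_1,s_2$ lie in the same coset of $C$. Therefore $\psi$ is injective on $D_1$ exactly when $D_1$ meets each coset of $C$ at most once; since $|D_1|=[G:C]$ equals the number of cosets, this is equivalent to $D_1$ being a transversal of $C$ in $G$, and by counting it is enough to prove that $D_1$ meets every coset, i.e.\ that the map $g\mapsto C[g,\phi]$ is onto $C\backslash G$.

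The remaining, and main, point is this surjectivity, which is where coprimality and nilpotency enter; I would prove it by induction on $|G|$. As $G$ is nilpotent, $Z(G)\neq 1$, so there is a nontrivial $\phi$-invariant subgroup $N\le Z(G)$; passing to $G/N$ and using the standard coprime fact $C_{G/N}(\phi)=CN/N$, the inductive hypothesis provides, for a given target coset $Cx$, an element $g$ with $[g,\phi]\in CxN$, say $[g,\phi]=cxn$ with $c\in C$ and $n\in N$. The idea is then to absorb the error $n$ by replacing $g$ with $gm$ for a suitable $m\in N$: because $N$ is central one computes $[gm,\phi]=[g,\phi][m,\phi]$ with $[m,\phi]\in N$, so $[gm,\phi]\in Cx$ as soon as $n[m,\phi]\in C\cap N$. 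Now the coprime decomposition $N=C_N(\phi)\times[N,\phi]$, together with the fact that $m\mapsto[m,\phi]$ maps $N$ onto $[N,\phi]$, lets me choose $m$ so that $n[m,\phi]$ falls into $C_N(\phi)=C\cap N$; this yields $[gm,\phi]\in Cx$ and completes the induction. I expect this absorbing step --- matching the $N$-component of the error against the image of $[\,\cdot\,,\phi]$ on $N$ --- to be the crux, while everything else is either formal or a routine consequence of coprime action.
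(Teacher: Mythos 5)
Your proof is correct, but there is nothing internal to compare it against: the paper imports Lemma \ref{bul} verbatim from \cite[Lemma 4.6]{KS} and does not reproduce a proof, so any argument is necessarily your own route. That route is sound. Writing $D_1=\{[g,\phi]\mid g\in G\}$, $D_2=\{[g,\phi,\phi]\mid g\in G\}$, $C=C_G(\phi)$ as you do, the reduction is clean: $D_2=\psi(D_1)\subseteq D_1$ for $\psi(s)=[s,\phi]$, so equality amounts to injectivity of $\psi$ on the finite set $D_1$; since $[g_1,\phi]=[g_2,\phi]$ exactly when $g_2g_1^{-1}\in C$, one gets $|D_1|=[G:C]$ and the coset criterion for injectivity, and your counting step correctly converts the lemma into the statement that every right coset of $C$ contains a commutator $[g,\phi]$. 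This reformulation is where the real work happens, and it is worth noting why it matters: a naive induction modulo the centre, aimed directly at writing a given $[g,\phi]$ as $[h,\phi,\phi]$, produces an error term in $C_{Z(G)}(\phi)$ that cannot be removed (and removing it via ``two commutators in the same coset of $C$ are equal'' would be circular, since that is precisely the transversal property being proved). In your coset formulation that error term is harmless, because the target is a whole coset $Cx$ and the $C_N(\phi)$-component of the error is simply absorbed into $C$ --- this is exactly what your absorption step achieves, using only the two standard coprime facts $C_{G/N}(\phi)=C_G(\phi)N/N$ and $N=C_N(\phi)\times[N,\phi]$ together with $[N,\phi]=\{[m,\phi]\mid m\in N\}$ for the central subgroup $N$. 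As a by-product, your argument proves a slightly stronger statement than the lemma itself: for a coprime automorphism $\phi$ of a finite nilpotent group $G$, the set of commutators $\{[g,\phi]\mid g\in G\}$ is a full transversal of $C_G(\phi)$ in $G$, the set equality of the lemma then following by pure counting.
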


A repeated application of the previous lemma yields the following result for profinite groups: 
\begin{lemma}\label{bul2} Let $\phi$ be a coprime automorphism of a pronilpotent group $G$ and let $k$ be a positive integer. Then  the set of  elements of the form $[g,\phi]$, where $g\in G$, coincides with the set of elements of the form $[g,_{k}\phi]$.
\end{lemma}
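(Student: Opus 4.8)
The plan is to establish the equality first for finite nilpotent groups and an arbitrary $k$ by iterating Lemma~\ref{bul}, and then to transfer the statement to pronilpotent groups through a routine inverse-limit (compactness) argument. One inclusion is free in both settings: since $[g,{}_k\phi]=[[g,{}_{k-1}\phi],\phi]$ is a commutator of the shape $[h,\phi]$, the set $\{[g,{}_k\phi]:g\in G\}$ is automatically contained in $\{[g,\phi]:g\in G\}$. The real content is the reverse inclusion, namely that every $[g,\phi]$ can be written as $[h,{}_k\phi]$ for a suitable $h$.

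For a finite nilpotent $G$, write $A=\{[g,\phi]:g\in G\}$. Since $[g,\phi]$ ranges over $A$ as $g$ ranges over $G$, Lemma~\ref{bul} says exactly that $\{[a,\phi]:a\in A\}=A$. I would then prove $\{[g,{}_k\phi]:g\in G\}=A$ by induction on $k$. The case $k=1$ is the definition of $A$; assuming the claim for $k$ and using $[g,{}_{k+1}\phi]=[[g,{}_k\phi],\phi]$ together with the fact that $[g,{}_k\phi]$ exhausts $A$, one gets
$$\{[g,{}_{k+1}\phi]:g\in G\}=\{[a,\phi]:a\in A\}=A,$$
which closes the induction and proves the finite case for every $k$.

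For the pronilpotent case, fix $g\in G$ and seek $h\in G$ with $[h,{}_k\phi]=[g,\phi]$. For each open $\phi$-invariant normal subgroup $N$ of $G$, the quotient $G/N$ is finite nilpotent and inherits a coprime automorphism from $\phi$, so the finite case supplies an element realising $[g,\phi]$ as a $k$-fold commutator modulo $N$. Thus the set
$$X_N=\{h\in G : [h,{}_k\phi]\,[g,\phi]^{-1}\in N\}$$
is non-empty, and it is closed because $h\mapsto[h,{}_k\phi]$ is continuous and $N$ is closed. As $X_{N_1\cap N_2}\subseteq X_{N_1}\cap X_{N_2}$, the family $\{X_N\}$ has the finite intersection property, and compactness of $G$ produces a point $h\in\bigcap_N X_N$. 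Because the $\phi$-invariant open normal subgroups form a base of neighbourhoods of $1$ (the semidirect product $G\langle\phi\rangle$ being profinite), we have $\bigcap_N N=1$, whence $[h,{}_k\phi]=[g,\phi]$, giving the missing inclusion.

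I expect the only point requiring genuine care to be this last transition from the finite to the profinite setting: one must verify that the $\phi$-invariant open normal subgroups really do intersect in the identity and that the sets $X_N$ form a nested, non-empty family of closed sets, so that compactness applies. The finite-group input and the bookkeeping of the iterated commutators are otherwise entirely mechanical.
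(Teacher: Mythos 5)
Your proposal is correct and takes essentially the same route as the paper: prove the finite nilpotent case by iterating Lemma~\ref{bul} inductively, then transfer to the pronilpotent group via the routine inverse-limit/compactness argument. The paper compresses your finite-intersection-property step into the remark that the continuous maps $x\mapsto[x,{}_n\phi]$ have equal images because they agree on every finite image of $G$, but the underlying content is identical.
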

\begin{proof}
For every positive integer $n$, let $\alpha_n$ be the continuous map defined on $G$ by $x \mapsto [x,_n \phi]$. We can repeatedly   apply Lemma  \ref{bul} to every finite image $\bar G$  of $G$, which is  nilpotent,  to get 
$\alpha_1(\bar G)=\alpha_2(\bar G)$, and so also  $\alpha_1(\bar G)=\alpha_k(\bar G)$. Since this holds for every finite image of $G$ and the maps are continuous, we deduce that $\alpha_1( G)=\alpha_k( G)$. 
\end{proof}

\section{On groups in which centralizers of word values are either finite or open}  

In this section we handle the important particular case of Theorem \ref{main} where the centralizers of all $\gamma_k$-values in $G$ are either finite or open. We will use some results from \cite{DMSrestricted}, which were proved for general multilinear commutator words.

Multilinear commutator words are words which are obtained by nesting commutators, but using always different variables. More formally, the word $w(x) = x$ in one variable is a multilinear commutator; if $u$ and $v$ are  multilinear commutators on disjoint sets of variables then the word $w=[u,v]$ is a multilinear commutator, and all multilinear commutators are obtained in this way.

Clearly, the lower central words $\gamma_k$ are particular instances of multilinear commutator words.

Throughout the article, if $w$ is a word and $G$ is a group, $G_w$ will denote the set of all $w$-values in $G$. 
  
We will need a combinatorial lemma proved in \cite{DMSrestricted}.

\begin{lemma}\cite[Lemma 3.4]{DMSrestricted}\label{comb1}
 Let $w=w(x_1, \dots, x_n)$ be a multilinear commutator word. 
Assume that  $T$ is a normal subgroup of a group $G$ and 
 $a_1, \dots , a_n$ are elements of $G$ such that every element in the set 
$\{w(a_1t_1,\dots,a_nt_n) \mid t_1,\dots,t_n\in T\}$ has at most $m$ conjugates in $G$.
Then every element in $T_w$ has at most $m^{2^n}$ conjugates in $G$.
\end{lemma}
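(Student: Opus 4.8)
The plan is to count conjugates via the elementary fact that $C_G(xy)\supseteq C_G(x)\cap C_G(y)$, so that whenever an element is written as a product of finitely many elements each having boundedly many conjugates, it too has boundedly many conjugates. The whole argument will be an induction that interpolates between the hypothesis set and $T_w$ by gradually moving the arguments from the cosets $a_iT$ into $T$. Accordingly, for a subset $B\subseteq\{1,\dots,n\}$ I would consider the set $V_B$ of all values $w(c_1,\dots,c_n)$ with $c_i\in T$ for $i\in B$ and $c_i\in a_iT$ for $i\notin B$. Then $V_{\emptyset}$ is exactly the set in the hypothesis, whose elements have at most $m$ conjugates, while $V_{\{1,\dots,n\}}=T_w$. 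Since $T$ is normal, each $V_B$ is closed under conjugation by elements of $T$, a fact I will use repeatedly. The claim to be proved by induction on $|B|$ is that every element of $V_B$ has at most $m^{2^{|B|}}$ conjugates in $G$; taking $B=\{1,\dots,n\}$ then yields the lemma.

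First I would record the commutator identity that drives the induction. Using the standard collection process for a multilinear commutator word, splitting a single variable $x_j=uv$ (and leaving the remaining arguments as single elements $c_i$) produces exactly two factors,
$$w(\dots,uv,\dots)=w(\bar c_1,\dots,\bar c_{j-1},u,\bar c_{j+1},\dots,\bar c_n)^{\,g}\cdot w(c_1,\dots,c_{j-1},v,c_{j+1},\dots,c_n),$$
in which the second (right-hand) factor is clean: its $j$-th argument is $v$ and every other argument is the original $c_i$, unchanged. In the first factor the $j$-th argument is $u$, while the conjugating element $g$ and each displacement $\bar c_i c_i^{-1}$ are built out of $v$ and commutators involving $v$. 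The crucial consequence, obtained by induction on the tree structure of $w$, is that if $v\in T$ then $g\in T$ and every $\bar c_i$ lies in the same coset of $T$ as $c_i$; hence the base of the first factor again belongs to the appropriate $V$-class. Establishing this identity, together with the bookkeeping that all auxiliary conjugations stay inside the normal subgroup $T$, is the technical heart of the proof and the step I expect to be the main obstacle.

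With the identity in hand the induction is short. The base case $B=\emptyset$ is the hypothesis. For the inductive step take $x=w(c_1,\dots,c_n)\in V_B$ with $|B|=r\geq1$, pick $j\in B$, so $c_j\in T$, and apply the identity with $u=a_j$ and $v=c_j$: since $a_jc_j\in a_jT$, the element $w(\dots,a_jc_j,\dots)$ lies in $V_{B\setminus\{j\}}$, and the clean right-hand factor is exactly $x$. Solving for $x$ expresses it as a product of $w(\dots,a_jc_j,\dots)$ and the inverse of the first factor, whose base lies in $V_{B\setminus\{j\}}$ by the coset bookkeeping above and whose number of conjugates is therefore unaffected by the external conjugation $g$. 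By the induction hypothesis both factors have at most $m^{2^{r-1}}$ conjugates, so the centralizer estimate gives $|x^G|\le m^{2^{r-1}}\cdot m^{2^{r-1}}=m^{2^{r}}$, completing the induction. Applying this with $B=\{1,\dots,n\}$ shows that every element of $T_w$ has at most $m^{2^n}$ conjugates, as required.
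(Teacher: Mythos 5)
Your argument is correct, and in fact this lemma is only cited in the paper (it is Lemma 3.4 of \cite{DMSrestricted}), so there is no internal proof to compare against; your induction --- moving one argument at a time from the coset $a_jT$ into $T$, writing the value with $a_jc_j$ in position $j$ as a product of a conjugate of a value whose other entries are only displaced within their $T$-cosets and the ``clean'' value, and then doubling the conjugacy-class bound via $|x^G|\le |y^G|\,|z^G|$ for $x=yz$ --- is exactly the kind of argument used in the cited source, and the bookkeeping ($2^{|B|}$ doublings, base case $m$) delivers precisely $m^{2^n}$. One small point in the technical identity you flag as the main obstacle: in the structural induction on $w=[\alpha,\beta]$ the clean factor comes out on the right when the split variable lies in $\alpha$ (via $[xy,b]=[x,b]^y[y,b]$) but on the left when it lies in $\beta$ (via $[a,xy]=[a,y][a,x]^y$), so either state the identity with the clean factor on an unspecified side (which suffices for the counting, since you can solve for it from either side) or rearrange with $XY=Y^{X^{-1}}X$, noting that the extra conjugator stays in $T$ because a multilinear commutator with one entry in the normal subgroup $T$ lies in $T$. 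Also note that the condition $g\in T$, while irrelevant to the final centralizer count, is genuinely needed inside the induction to keep the displaced entries $\bar c_i$ in their cosets (they get conjugated by elements built from $g$), and your formulation does include it, so the induction closes.
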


We will first consider profinite groups where  every $\gamma_k$-value is an $FC$-element. The following result was recently obtained in \cite{S-BFC}. Here $(k,m)$-bounded means bounded by a function of the parameters $k$ and $m$.

\begin{theorem}\label{dadada} 
Let $k\geq1$ and $G$ be a group in which $|x^G|\leq m$ for any $\gamma_k$-value $x\in G$. Then $G$ has a nilpotent subgroup of $(k,m)$-bounded index and $(k,m)$-bounded class. 
\end{theorem}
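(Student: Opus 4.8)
The plan is to reduce Theorem~\ref{dadada} to the single assertion that $\gamma_{k+1}(G)$ is finite of $(k,m)$-bounded order. Indeed, suppose this is known and put $C=C_G(\gamma_{k+1}(G))$. Conjugation gives an action of $G$ on the normal subgroup $\gamma_{k+1}(G)$ with kernel $C$, so $G/C$ embeds into $\mathrm{Aut}(\gamma_{k+1}(G))$ and hence $[G:C]\le |\gamma_{k+1}(G)|!$ is $(k,m)$-bounded. Moreover $\gamma_{k+1}(C)\le\gamma_{k+1}(G)$ while $C$ centralizes $\gamma_{k+1}(G)$, so $\gamma_{k+2}(C)=[\gamma_{k+1}(C),C]=1$; thus $C$ is nilpotent of class at most $k+1$. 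Hence $C$ is the desired subgroup, and the whole problem reduces to bounding $|\gamma_{k+1}(G)|$.

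I would bound $|\gamma_{k+1}(G)|$ by induction on $k$. For $k=1$ the hypothesis says that every element of $G$ has at most $m$ conjugates, and the conclusion $|\gamma_2(G)|=|G'|\le f(m)$ is precisely the theorem of B.~H.~Neumann, with the explicit bound supplied by Wiegold. For the inductive step one first notes that the set of $\gamma_k$-values is a normal, inverse-closed subset of $G$ contained in $\Delta_m=\{x\in G:|x^G|\le m\}$ and generating $\gamma_k(G)$, since $[x_1,\dots,x_k]^g=[x_1^g,\dots,x_k^g]$. It is tempting to conclude by a Neumann-type statement, but this \emph{fails}: a group generated by a normal subset lying in $\Delta_m$ need not have finite derived subgroup (for instance $\bigoplus S_3$ is generated by its one-coordinate elements, each with at most three conjugates, yet has infinite derived subgroup). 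Therefore the nested structure of $\gamma_k$ must genuinely be exploited. Since a $\gamma_k$-value of a subgroup $H$ has no more conjugates in $H$ than in $G$, and $\gamma_{k+1}(G)$ is the directed union of the subgroups $\gamma_{k+1}(H)$ over finitely generated $H\le G$, it suffices to treat finitely generated $G$, which avoids any appeal to residual finiteness.

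The mechanism I would use for the inductive step is to localise the bounded-conjugacy condition to a normal subgroup via Lemma~\ref{comb1} and then feed the outcome into the inductive hypothesis. Concretely, for a suitable normal subgroup $T$ of $G$, Lemma~\ref{comb1} (applied with $w=\gamma_k$, which has $k$ variables) upgrades bounded conjugacy of the $\gamma_k$-values lying over fixed cosets $a_iT$ to the bound $|x^G|\le m^{2^k}$ for every $\gamma_k$-value $x$ whose arguments all lie in $T$. The aim is to choose $T$ so that inside $T$ the $\gamma_{k-1}$-values acquire bounded conjugacy classes; the inductive hypothesis then bounds $|\gamma_k(T)|$, and a covering argument in the spirit of Neumann, using that all $\gamma_k$-values lie in $\Delta_m$, is meant to control the remaining part of $\gamma_{k+1}(G)$ and assemble a bound on its order.

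The hard part is exactly this inductive step. The naive generalisation of Neumann's theorem to normal generating subsets of $\Delta_m$ is false, so one cannot sidestep the multilinear, nested form of $\gamma_k$; and the real difficulty is that the $\gamma_{k-1}$-values are \emph{not} assumed to have bounded conjugacy classes, so they cannot be handled by the induction directly. The crux is thus to manufacture, through Lemma~\ref{comb1} and a judicious choice of the normal subgroup $T$, a situation in which the lower-level values become controllable while still capturing enough of $\gamma_{k+1}(G)$ to force its order to be $(k,m)$-bounded.
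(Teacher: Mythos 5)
Your proposal hinges on a false intermediate claim. The reduction itself (if $\gamma_{k+1}(G)$ were finite of $(k,m)$-bounded order, then $C=C_G(\gamma_{k+1}(G))$ has bounded index and class at most $k+1$) is a correct implication, but for every $k\ge 2$ the hypothesis does \emph{not} imply that $\gamma_{k+1}(G)$ is finite. Take $A$ an infinite elementary abelian group of exponent $3$ and $G=A\rtimes\langle t\rangle$ with $t$ of order $2$ acting by inversion. Then $\gamma_j(G)=A$ for all $j\ge 2$, so every $\gamma_k$-value lies in the abelian subgroup $A$ of index $2$ and hence has at most two conjugates; thus the hypothesis holds with $m=2$ for every $k$, while $\gamma_{k+1}(G)=A$ is infinite. (The conclusion of the theorem is of course satisfied: $A$ is abelian of index $2$.) This is precisely why the statement — which this paper does not prove but quotes from \cite{S-BFC} — asserts only the existence of a nilpotent subgroup of bounded index and bounded class, and why even for $k=2$ Eberhard and Shumyatsky \cite{ES} obtain only a bounded-index subgroup $H$ with $|\gamma_4(H)|$ bounded, rather than a bound on a term of the lower central series of $G$ itself. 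Your induction on $k$ is therefore aimed at an impossible target; $k=1$ (Neumann--Wiegold) is the only case where the target statement is true.

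Two further points. In the abstract-group setting of the statement, the appeal to Lemma \ref{comb1} buys nothing: the hypothesis already gives $|x^G|\le m$ for \emph{all} $\gamma_k$-values, in particular those with entries in any normal subgroup $T$; in the paper that lemma is needed only in the profinite situation (Proposition \ref{profinite-FC}), to upgrade the local conclusion of the Baire category argument from a coset of an open subgroup to the subgroup itself. And even setting the counterexample aside, your inductive step is only a programme — you yourself flag that the crux (forcing bounded conjugacy of $\gamma_{k-1}$-values inside a suitable $T$) is unresolved — so no complete argument is given. A workable route must aim at a conclusion of the form ``some bounded-index subgroup has a bounded term of its lower central series,'' as in \cite{ES} and \cite{S-BFC}, not at finiteness of $\gamma_{k+1}(G)$.
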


To deal with the case $k=2$ we require a more complicated result from \cite{ES} (see Theorem 1.2  and the  preceding comments).

\begin{theorem}\label{ESgamma} If $G$ is a group in which $|x^G|\leq m$ for any commutator $x\in G$, then
$G$ has a subgroup $H$ of nilpotency class at most $4$ such that $[G : H]$ and $|\gamma_4(H)|$ are both finite and $m$-bounded.
\end{theorem}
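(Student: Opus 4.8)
The plan is to translate the hypothesis into a lower bound on a commuting probability and then to feed this into a probabilistic nilpotence argument. First I would reduce to the case where $G$ is finite. Every finitely generated subgroup $F\le G$ still satisfies the hypothesis, since $C_F(x)=C_G(x)\cap F$ gives $|x^F|\le|x^G|\le m$ for each commutator $x\in F$; as all the bounds sought are uniform in $m$, a standard compactness (ultraproduct) argument then transfers an $m$-bounded structural conclusion obtained for finite groups back to $G$. For the intended profinite applications the finite case is in any event sufficient via the inverse limit argument.

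So assume $G$ is finite. For each pair $a,b\in G$ the hypothesis gives $|C_G([a,b])|=|G|/|[a,b]^G|\ge |G|/m$, and averaging over all triples yields
\begin{equation*}
\Pr_{a,b,c}\big[[a,b,c]=1\big]=\frac{1}{|G|^{3}}\sum_{a,b\in G}|C_G([a,b])|\ge\frac{1}{m}.
\end{equation*}
Thus the word $\gamma_3$ takes the value $1$ on a random triple with probability at least $1/m$. The heart of the proof is a probabilistic nilpotence statement: such a uniform lower bound on the vanishing probability of $\gamma_3$ forces $G$ to contain a subgroup $K$ of $m$-bounded index such that $\gamma_4(K)$ is finite of $m$-bounded order. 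As is typical for these results one loses a single commutator, so that positive probability of $\gamma_3=1$ yields virtual nilpotency of class three (boundedly finite $\gamma_4$) rather than class two. This is precisely where the quantitative probabilistic methods of \cite{ES} are invoked.

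It remains to sharpen $K$ into a subgroup of class at most $4$. Put $H=C_K(\gamma_4(K))$. Since $\gamma_4(K)$ is a finite group of $m$-bounded order, $\mathrm{Aut}(\gamma_4(K))$ has $m$-bounded order, and as $K$ acts by conjugation on the normal subgroup $\gamma_4(K)$ the index $[K:H]$ is $m$-bounded; hence $[G:H]$ is $m$-bounded. Moreover $\gamma_4(H)\le\gamma_4(K)$ is finite of $m$-bounded order, while
\begin{equation*}
\gamma_5(H)=[\gamma_4(H),H]\le[\gamma_4(K),C_K(\gamma_4(K))]=1,
\end{equation*}
so $H$ is nilpotent of class at most $4$ with $|\gamma_4(H)|$ being $m$-bounded, as required. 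The main obstacle is the middle step: converting the single probabilistic inequality into the structural statement about $K$ with fully explicit $m$-bounds. This is where the delicate probabilistic (and character-theoretic) analysis is unavoidable, and it is also the point at which the nilpotency class and the order of $\gamma_4(K)$ must be controlled simultaneously.
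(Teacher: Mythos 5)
The paper itself does not prove Theorem \ref{ESgamma}: the statement is imported verbatim from \cite{ES} (Theorem 1.2 there), so your proposal has to stand on its own, and as written it does not. The decisive gap is the reduction to finite groups. Passing to finitely generated subgroups does preserve the hypothesis, but finitely generated subgroups need not be finite --- the integral Heisenberg group satisfies the hypothesis with $m=1$ --- and both your averaging inequality and the probabilistic theorem you want to invoke make sense only for finite groups, since they quantify over a uniform random triple. Moreover the theorem is stated, and is used in this paper, for arbitrary abstract groups: in Proposition \ref{profinite-FC} it is applied to an open subgroup $T$ of a profinite group, regarded as an abstract group, to produce an abstract nilpotent subgroup $B$. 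To bridge finite and finitely generated you would first need to know that a finitely generated group satisfying the hypothesis is residually finite, which in turn requires virtual nilpotence (Theorem \ref{dadada}, i.e.\ \cite{S-BFC}, or the structural half of \cite{ES}) --- an ingredient of essentially the same depth as the statement being proved, and absent from your argument. Your appeal to ``standard compactness'' covers only the easier passage from finitely generated subgroups to $G$; even granting that the profinite application could be rerouted through finite continuous quotients and an inverse limit, that rerouting is not carried out here.

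There is a second, structural problem: the ``heart'' of your proof is the main probabilistic theorem of \cite{ES}, a different theorem of the very paper the statement is quoted from, used as a black box. Besides not being self-contained, this very likely inverts the logical order of \cite{ES}: exactly as P.~Neumann's commuting-probability theorem rests on the BFC theorem, the conjugacy-class result (Theorem 1.2 of \cite{ES}) is proved there directly by FC-theoretic arguments valid for abstract groups, and the probabilistic theorem for finite groups is deduced with its help; so your derivation runs a serious risk of circularity. A telltale sign that something is off: if a bounded-by-class-two structure theorem for finite groups with $\Pr([x,y,z]=1)\ge 1/m$ were available as a black box, then your centralizer argument, applied to $\gamma_3$ of the resulting subgroup rather than to $\gamma_4$, would produce a bounded-index subgroup of class at most $3$ --- a strictly stronger conclusion than the one stated. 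The weaker form ``class at most $4$ with $|\gamma_4(H)|$ $m$-bounded'' is a footprint of the direct structural proof in \cite{ES}, not of a probabilistic one. Your probability estimate and your final centralizer step are both correct, but they are the routine parts of the argument; the two steps that carry the real weight are either missing or borrowed in a way that cannot be justified.
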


\begin{proposition}\label{profinite-FC} 
Let $k$ be a positive integer and $G$ a profinite group 
 in which every $\gamma_k$-value is an $FC$-element.  Then $G$ is virtually nilpotent. If $k=2$ then $G$  has
an open  subgroup $H$ of nilpotency class at most $3$. 
\end{proposition}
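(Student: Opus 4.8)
The plan is to reduce the profinite statement to the two finite (or abstract) theorems already quoted, Theorem~\ref{dadada} and Theorem~\ref{ESgamma}, via the standard inverse-limit machinery. The central difficulty is that the hypothesis is about $\gamma_k$-values being $FC$-elements in the profinite group $G$, whereas Theorems~\ref{dadada} and~\ref{ESgamma} require a \emph{uniform} bound $m$ on the conjugacy-class sizes $|x^G|$. So the first and most delicate step is to manufacture such a uniform bound.

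\begin{proof}[Proof proposal]
First I would show that the hypothesis already forces a global bound on the conjugacy-class sizes of $\gamma_k$-values. Since every $\gamma_k$-value lies in the abstract subgroup $\F$ of $FC$-elements, one knows that the set $G_{\gamma_k}$ of $\gamma_k$-values is contained in $\F$; the aim is to prove that there is a single integer $m$ with $|x^G|\le m$ for all $x\in G_{\gamma_k}$. Here I would exploit compactness: write $G$ as the inverse limit of its finite quotients $G/N$, where $N$ ranges over the open normal subgroups. In each quotient the images of $\gamma_k$-values have finite class sizes, and Lemma~\ref{comb1} (the combinatorial lemma of Detomi--Morigi--Shumyatsky) is precisely the tool that converts a bound on class sizes of a coset-full family $w(a_1T,\dots,a_nT)$ into a bound $m^{2^n}$ on all of $T_w$. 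The strategy is to choose the family of elements so that, if no uniform bound existed, one could build a descending chain producing a $\gamma_k$-value with infinitely many conjugates, contradicting that it is an $FC$-element. This is the step I expect to be the main obstacle, because it requires carefully organising a Baire-category or K\H{o}nig-type argument over the open normal subgroups to turn the pointwise finiteness ``each $|x^G|<\infty$'' into the uniform ``$\sup_x|x^G|<\infty$''.

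Once a uniform bound $|x^G|\le m$ for all $\gamma_k$-values is secured, the rest is a routine inverse-limit transfer. For the general $k$, I would apply Theorem~\ref{dadada} to every finite quotient $G/N$: each such quotient satisfies $|\bar x^{G/N}|\le m$ for all $\gamma_k$-values $\bar x$, so $G/N$ has a nilpotent subgroup of $(k,m)$-bounded index and $(k,m)$-bounded nilpotency class, say index at most $I$ and class at most $c$, with $I$ and $c$ independent of $N$. An inverse-limit argument then produces an open nilpotent subgroup $H$ of $G$ of index at most $I$ and class at most $c$; concretely, one can use the fact that the set of open subgroups of index at most $I$ is finite in each quotient and pass to the limit, so $G$ is virtually nilpotent.

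For the case $k=2$ I would instead feed the uniform bound into Theorem~\ref{ESgamma}. Applied to each finite quotient $G/N$, it yields a subgroup $\bar H_N$ of nilpotency class at most $4$ with $[G/N:\bar H_N]$ and $|\gamma_4(\bar H_N)|$ both $m$-bounded, uniformly in $N$. Passing to the inverse limit gives an open subgroup $H$ of $G$ of class at most $4$ with $\gamma_4(H)$ finite. The final refinement from class $4$ to class $3$ comes from discarding the finite piece: since $\gamma_4(H)$ is finite, after replacing $H$ by a suitable open subgroup avoiding this finite subgroup (using Lemma~\ref{center} to locate a central complement in the finite nilpotent obstruction, or simply passing to an open normal subgroup whose intersection with $\gamma_4(H)$ is trivial), one obtains an open subgroup of nilpotency class at most $3$. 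This trimming step, while technical, is the expected payoff of keeping track of the \emph{finite} and \emph{$m$-bounded} nature of $\gamma_4(H)$ in the quoted theorem.
\end{proof}
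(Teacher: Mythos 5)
Your overall architecture --- Baire category plus Lemma \ref{comb1}, followed by Theorem \ref{dadada} (resp.\ Theorem \ref{ESgamma}) and a final trimming step for $k=2$ --- is the same as the paper's, and your second and third steps are essentially correct. But your first step contains a genuine gap, and it is the very step you flag as the main obstacle: you require a \emph{global} uniform bound $|x^G|\le m$ valid for \emph{all} $\gamma_k$-values of $G$, and you do not prove it. The compactness heuristic you sketch cannot work as stated: the sets $\{x\in G : |x^G|>n\}$ are \emph{open} (having more than $n$ distinct conjugates is an open condition), so they form a decreasing chain of nonempty open subsets of a compact space, and such a chain may perfectly well have empty intersection; equivalently, a compact space can be covered by a countable increasing family of proper closed subsets. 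Hence pointwise finiteness of $|x^G|$ on the closed set of $\gamma_k$-values does not yield uniform boundedness, neither by compactness nor by a K\H{o}nig-type ``descending chain'' argument, and Baire category only gives a bound on some nonempty open piece of $G\times\cdots\times G$, not everywhere.

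The paper's proof shows that the global bound is not needed, and this is the point you are missing. Baire category applied to the closed sets $C_j=\{(g_1,\dots,g_k) : |[g_1,\dots,g_k]^G|\le j\}$, which cover $G\times\cdots\times G$ by hypothesis, produces a single box: elements $y_1,\dots,y_k$, an open normal subgroup $T$, and an integer $m$ with $|[y_1t_1,\dots,y_kt_k]^G|\le m$ for all $t_i\in T$. Lemma \ref{comb1} then bounds, by $m^{2^k}$, the class sizes only of the $\gamma_k$-values \emph{with all entries in $T$} (the set $T_{\gamma_k}$); note that this is exactly what that lemma concludes --- it says nothing about $\gamma_k$-values involving entries outside $T$. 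But this is enough: $T$ is then a group satisfying the hypothesis of Theorem \ref{dadada} (resp.\ Theorem \ref{ESgamma}) with the constant $m^{2^k}$, so one applies the abstract theorem \emph{directly to $T$} --- no passage to finite quotients and no inverse-limit compatibility argument is required --- obtaining an abstract nilpotent subgroup $B\le T$ of bounded index and bounded class (resp.\ with $|\gamma_4(B)|$ bounded). The topological closure $\overline{B}$ is open in $G$ (a closed finite-index subgroup of the open subgroup $T$) and nilpotent of the same class, since group laws pass to closures; for $k=2$ one finishes, as you do, by intersecting with an open subgroup meeting $\gamma_4(\overline{B})$ trivially. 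So your proposal can be repaired by a local change --- replace ``all $\gamma_k$-values of $G$'' by ``all $\gamma_k$-values of $T$'' and run your reduction inside $T$ (or, more simply, apply the quoted theorems to $T$ as an abstract group) --- but as written, step 1 is an unproved claim that does not follow from the argument you sketch, and the remaining steps are made to depend on it unnecessarily.
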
 
\begin{proof}
For each positive integer $j$ consider the set $\Delta_j$ of elements $g\in G$ such that $|g^G|\le j$.  Note that the sets $\Delta_j$ are closed (see for instance   \cite[Lemma 5]{LP}). Set: 
$$C_j=\{(g_1,\dots,g_k) \mid  g_i\in G {\textrm{ and }} [g_1,\dots,g_k] \in \Delta_j\}.$$

Each set $C_j$ is closed in $G \times \cdots \times G$, being  the inverse image of the closed set  $\Delta_j$ under the continuous map $(g_1,\dots, g_k)\mapsto [g_1, \dots , g_k]$. 
Moreover the union of the sets $C_j$ is the whole group $G \times \cdots \times G$. By the Baire category theorem (cf. \cite[p.\ 200]{Ke}) at least one of the sets $C_j$ has nonempty interior. Hence, there exists a positive integer $m$, some elements $\y_i\in G$, and an open normal subgroup $T$ of $G$ such that 
$$[\y_1 T,\dots,\y_nT]\subseteq \Delta_{m}.$$ 

By Lemma \ref{comb1}, every  $\gamma_k$-value in elements of $T$ has at most $m^{2^k}$ conjugates in $G$. It follows from Theorem \ref{dadada} that 
 $T$ has a nilpotent abstract subgroup $B$ of $(k,m)$-bounded index and $(k,m)$-bounded class. 
Then the topological closure of $B$ is an open nilpotent subgroup of $G$. Thus $G$ is virtually nilpotent, as claimed.

If $k=2$, by Theorem \ref{ESgamma} the subgroup $T$ contains a nilpotent abstract subgroup $B$ of $m$-bounded index and such that $\gamma_4(B)$ has $m$-bounded order. Then the topological closure $\ol B$ of $B$ is an open subgroup of $G$ such that $\gamma_4(\ol B)$ has finite order. Choose an open subgroup $K\le G$ such that $K\cap \gamma_4(\ol B)=1$. Observe that $K\cap \ol B$ is open in $G$ and nilpotent of class at most $3$, as required.
\end{proof}

We will require the following corollary of the main result in \cite{DMSrestricted}. 
\begin{lemma}\label{openT} \cite[Corollary 1.2]{DMSrestricted}
Let $w$ be a multilinear commutator word and $G$ a profinite group in which the centralizers of $w$-values 
 are either finite or open. Then  $G$ has an open subgroup $T$ such that $w(T)$ is abelian.  
\end{lemma}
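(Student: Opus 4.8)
The plan is to deduce the statement from the main theorem of \cite{DMSrestricted}, which under the present hypotheses guarantees that $w(G)$ is virtually abelian. Granting this, write $H=w(G)$ and observe that $H$ is a closed normal subgroup of $G$ on which $G$ acts by continuous automorphisms via conjugation. The whole difficulty is then reduced to a purely group-theoretic statement: producing an open abelian subgroup of $H$ that is invariant under all of these automorphisms (equivalently, normal in $G$). Once such a subgroup $A^{*}$ is in hand, I would pass to the quotient $\bar G=G/A^{*}$, in which $w(\bar G)=w(G)/A^{*}$ is finite, and extract the desired open subgroup $T$ by a routine residual-finiteness argument.

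For the construction of $A^{*}$ I would work inside $H$ and use its FC-center $\Delta(H)=\{x\in H\mid |x^{H}|<\infty\}$, which is a characteristic subgroup of $H$. Since $H$ is virtually abelian it contains an open abelian subgroup which, after passing to its core in $H$, may be taken to be an open abelian \emph{normal} subgroup $A$. Every $a\in A$ satisfies $C_H(a)\supseteq A$, so $a$ has centralizer of finite index and hence lies in $\Delta(H)$; thus $A\le\Delta(H)$, so $\Delta(H)$ is open in $H$ and $|\Delta(H):A|<\infty$. The key point is that $Z(\Delta(H))$ is open: choosing coset representatives $x_1,\dots,x_r$ of $A$ in $\Delta(H)$, the map $a\mapsto[a,x_i]$ is a continuous endomorphism of the abelian group $A$ (here I use $A\trianglelefteq H$), and its image $[A,x_i]$ is finite because $x_i\in\Delta(H)$, so its kernel $C_A(x_i)$ is open. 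Intersecting these finitely many open subgroups shows that $A\cap Z(\Delta(H))=\bigcap_i C_A(x_i)$ is open in $A$, hence in $H$. Therefore $A^{*}:=Z(\Delta(H))$ is an open abelian subgroup of $H$; being the centre of the characteristic subgroup $\Delta(H)$ it is characteristic in $H$, and since $H\trianglelefteq G$ it is normal in $G$.

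With $A^{*}$ fixed I would conclude as follows. In $\bar G=G/A^{*}$ the verbal subgroup $w(\bar G)=w(G)/A^{*}$ is finite, being $w(G)$ modulo an open subgroup. As $\bar G$ is residually finite, for each nontrivial element of the finite normal subgroup $w(\bar G)$ there is an open normal subgroup of $\bar G$ avoiding it, and intersecting the finitely many of these yields an open normal subgroup $\bar N\trianglelefteq\bar G$ with $\bar N\cap w(\bar G)=1$. Then $w(\bar N)\le\bar N\cap w(\bar G)=1$, so the preimage $T$ of $\bar N$ in $G$ is open and satisfies $w(T)\le A^{*}$; in particular $w(T)$ is abelian, as required.

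The main obstacle is the middle step. Virtual abelianness of $H$ as delivered by \cite{DMSrestricted} only produces open abelian subgroups that can be made normal in $H$ by a core argument, but not obviously invariant under the outer action of $G$; naively taking a $G$-core could destroy finiteness of the index when $w(G)$ has infinite index in $G$. The device of replacing the chosen open abelian $A$ by the canonical subgroup $Z(\Delta(H))$ is precisely what upgrades normality in $H$ to $G$-invariance, and checking that this canonical subgroup is still open is the only place where the structure of virtually abelian (equivalently, FC) profinite groups is genuinely needed rather than formal manipulation.
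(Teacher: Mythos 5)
Your argument is correct, and it should be said up front that the paper itself contains no proof of this statement: Lemma~\ref{openT} is imported verbatim as Corollary~1.2 of \cite{DMSrestricted}, so there is no internal argument to compare with. What you have produced is a sound reconstruction of how that corollary follows from the main theorem of the same reference (that $w(G)$ is virtually abelian when the centralizers of $w$-values are finite or open). The genuine difficulty, which you identify correctly, is upgrading an open abelian subgroup of $H=w(G)$ that is merely normal in $H$ to one that is normal in $G$ without losing openness, and your device of passing to the canonical subgroup $A^{*}=Z(\Delta(H))$ does exactly this: $\Delta(H)$ is open because it contains the $H$-core $A$ of an open abelian subgroup; the identity $A\cap Z(\Delta(H))=\bigcap_i C_A(x_i)$ holds because $A$ is abelian and the $x_i$ exhaust the cosets of $A$ in $\Delta(H)$; each map $a\mapsto[a,x_i]$ is a continuous homomorphism of $A$ (using $A\trianglelefteq H$ and $A$ abelian) with finite image (since $x_i$ has finitely many $H$-conjugates), so $C_A(x_i)$ is closed of finite index, hence open; and $Z(\Delta(H))$ is characteristic in $H$, hence normal in $G$, and closed in $G$ (being an open, hence closed, subgroup of the closed subgroup $H$), so $G/A^{*}$ is a profinite group. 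The concluding step is standard and correct: since $A^{*}\le w(G)$ is open in $w(G)$, the closed verbal subgroup $w(G/A^{*})=w(G)/A^{*}$ is finite, an open normal subgroup $\bar N$ of $G/A^{*}$ meeting it trivially exists by residual finiteness, and its preimage $T$ satisfies $w(T)\le A^{*}$, hence is abelian. The only points worth making explicit in a final write-up are the identification $w(G/A^{*})=w(G)A^{*}/A^{*}$ (immediate because continuous images of the compact verbal subgroup are closed) and the closedness of $A^{*}$; neither is a gap.
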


\begin{proposition}\label{maingamma}
Let $k$ be a positive integer and $G$ a profinite group in which the centralizers of $\gamma_k$-values 
 are either finite or open. Then $G$ is virtually nilpotent. If $k=2$, then $G$ has an open  subgroup which is  nilpotent of class at most $3$. 
 \end{proposition}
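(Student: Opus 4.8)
The plan is to reduce to Proposition~\ref{profinite-FC} by first passing to an open subgroup on which $\gamma_k$ behaves well. Since $\gamma_k$ is a multilinear commutator word, Lemma~\ref{openT} applies and yields an open subgroup $T\le G$ such that $\gamma_k(T)$ is abelian. Every $\gamma_k$-value of $T$ is also a $\gamma_k$-value of $G$, so by hypothesis each such element has a centralizer in $G$ that is finite or open. The abelianness of $\gamma_k(T)$ is precisely the device that upgrades this element-wise alternative into a dichotomy for the subgroup $\gamma_k(T)$ itself.

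To see this, I would suppose first that some $\gamma_k$-value $g$ of $T$ has finite centralizer $C_G(g)$. As $g\in\gamma_k(T)$ and $\gamma_k(T)$ is abelian, the whole of $\gamma_k(T)$ commutes with $g$, so $\gamma_k(T)\le C_G(g)$ and therefore $\gamma_k(T)$ is finite. Equivalently, if $\gamma_k(T)$ is infinite then no $\gamma_k$-value of $T$ can have finite centralizer, so all such centralizers are open; intersecting with the open subgroup $T$, every $\gamma_k$-value of $T$ is then an $FC$-element of $T$. In this case Proposition~\ref{profinite-FC} applied to $T$ shows that $T$ is virtually nilpotent, and that for $k=2$ the group $T$ has an open subgroup of nilpotency class at most $3$. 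Since $T$ is open in $G$, the resulting open subgroup serves for $G$ as well.

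It remains to treat the case in which $N=\gamma_k(T)$ is finite. Here I would pass to $C=C_T(N)$. Because $N$ is finite and normal in $T$, the quotient $T/C$ embeds in $\mathrm{Aut}(N)$ and is hence finite, so $C$ is open in $G$. Now $\gamma_k(C)\le\gamma_k(T)=N\le Z(C)$, which forces $\gamma_{k+1}(C)=1$, so $C$ is nilpotent of class at most $k$. Thus $G$ is again virtually nilpotent, and when $k=2$ the open subgroup $C$ has class at most $2$, in particular at most $3$. Combining the two cases completes the proof.

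The only genuinely delicate point is the dichotomy described above: recognising that Lemma~\ref{openT} is exactly what converts the ``finite centralizer'' alternative into the statement that $\gamma_k(T)$ is finite. Once this observation is made, each branch reduces to something already available---Proposition~\ref{profinite-FC} on one side and an elementary centralizer computation on the other---so I expect no further serious difficulty.
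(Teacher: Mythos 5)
Your proposal is correct and follows essentially the same route as the paper's proof: apply Lemma~\ref{openT} to get $T$ with $\gamma_k(T)$ abelian, then split according to whether all $\gamma_k$-values of $T$ are $FC$-elements (handled by Proposition~\ref{profinite-FC}) or $\gamma_k(T)$ is forced to be finite by a finite centralizer. The only immaterial difference is in the finite case: the paper takes an open normal subgroup meeting $\gamma_k(T)$ trivially, giving class at most $k-1$, while you pass to $C_T(\gamma_k(T))$, giving class at most $k$ (valid since $\gamma_k(T)$, being abelian, lies in $Z(C_T(\gamma_k(T)))$); both suffice for the stated conclusion.
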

 \begin{proof}  By Lemma \ref{openT}, $G$ has an open subgroup $T$ such that $\gamma_k(T)$ is abelian. Without loss of generality, we can assume that $G=T$. 
 
 If all $\gamma_k$-values of $G$ are $FC$-elements, then by Proposition \ref{profinite-FC} we conclude that  $G$ is virtually nilpotent. In particular,  if  $k=2$, then $G$ has an open  subgroup which is  nilpotent of class at most $3$. 
  
So assume that  there exists a $\gamma_k$-value whose centralizer  in $G$ is finite.  As $\gamma_k(G)$ is abelian,  we conclude that $\gamma_k(G)$ is finite. 
 Therefore there exists an open normal subgroup $N$ of $G$ such that $N \cap \gamma_k(G)=1$. In particular, $\gamma_k(N)=1$ and $N$ is an open nilpotent subgroup of $G$. If $k=2$ then $N$ is abelian and $G$ is virtually abelian. This concludes the proof.
  \end{proof}

\section{On groups in which centralizers of uniform commutators are finite} 

In this section we will prove 
 Theorem \ref{main} in the special case where the elements of $\U_k$ have finite centralizers (see Proposition \ref{mainFinite}).
Throughout, it will be assumed that $k\geq2$. Note that if $G$ is a profinite group where all $\gamma_k$-values have finite centralizers, then $G$ is either finite or nilpotent of class at most $k-1$ \cite[Corollary 1.3]{DMSrestricted}.

We will repeatedly use the following observation: 

\begin{lemma}\label{finite} Let  $G$ be a profinite group in which the centralizers of {\rm u}$_k$-commutators are finite.
If  $H$ is a pronilpotent subgroup of $G$, then either $H$ is finite or $H \cap  \U_k =1$. In the latter case,  $H$ is nilpotent of class at most $k-1$. 
\end{lemma}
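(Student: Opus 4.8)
The plan is to exploit the Sylow decomposition of the pronilpotent group $H$ together with the fact that $\gamma_k$-values supported on a single Sylow subgroup are automatically uniform. Write $H=\prod_p H_p$ as the direct product of its Sylow subgroups. The key observation is that for each prime $p$, every $\gamma_k$-value $[a_1,\dots,a_k]$ with $a_1,\dots,a_k\in H_p$ is a u$_k$-commutator of $G$: each nontrivial $a_i$ is a $p$-element, so $\pi(a_1)=\dots=\pi(a_k)$. Consequently, by hypothesis every $\gamma_k$-value of $H_p$ has finite centralizer in $G$, hence in $H_p$. Applying the result quoted at the beginning of this section (\cite[Corollary 1.3]{DMSrestricted}) to the profinite group $H_p$, I conclude that each $H_p$ is either finite or nilpotent of class at most $k-1$.

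I would dispose of the case $H\cap\U_k=1$ first. Since $H$ is pronilpotent, every $\gamma_k$-value of $H$ is a u$_k$-commutator of $H$, hence lies in $\U_k(H)\subseteq \U_k(G)\cap H=H\cap\U_k=1$. Thus all $\gamma_k$-values of $H$ are trivial, so $\gamma_k(H)=1$ and $H$ is nilpotent of class at most $k-1$, as required.

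It remains to prove the dichotomy, for which I would show that $H\cap\U_k\neq1$ forces $H$ to be finite. Pick $1\neq g\in H\cap\U_k$; by hypothesis $C_G(g)$ is finite, so $C_H(g)$ is finite, and since $\overline{\langle g\rangle}\le C_H(g)$ the element $g$ has finite order and $\pi(g)$ is a finite set. Writing $g=\prod_p g_p$ with $g_p\neq1$ exactly when $p\in\pi(g)$, and using $C_H(g)=\prod_p C_{H_p}(g_p)$, I argue Sylow by Sylow. For $p\notin\pi(g)$ we have $g_p=1$, so $H_p\le C_H(g)$; hence $\prod_{p\notin\pi(g)}H_p$ is finite, which forces each such $H_p$ to be finite and all but finitely many of them to be trivial. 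For the finitely many $p\in\pi(g)$ the factor $C_{H_p}(g_p)$ is finite; were $H_p$ infinite, the observation of the first paragraph would make it nilpotent of class at most $k-1$, whence Lemma \ref{center} would give $Z(H_p)$ infinite, contradicting $Z(H_p)\le C_{H_p}(g_p)$. Thus every $H_p$ is finite and almost all are trivial, so $H=\prod_p H_p$ is finite.

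The crux of the argument — and the step I expect to be the main obstacle — is the first paragraph: the fact that $\gamma_k$-values concentrated in one Sylow subgroup are genuine uniform commutators of $G$, which is what allows the hypothesis to be fed, one prime at a time, into \cite[Corollary 1.3]{DMSrestricted}. Without this reduction the conclusion would be false, since an infinite pro-$p$ group may well contain a nontrivial element of finite order with finite centralizer; it is precisely the uniform-commutator structure, combined with Lemma \ref{center}, that rules this out here.
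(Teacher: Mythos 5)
Your proof is correct, but it takes a more roundabout route than the paper's. You localize everything prime by prime: you apply \cite[Corollary 1.3]{DMSrestricted} to each Sylow subgroup $H_p$ separately, then finish via the decomposition $C_H(g)=\prod_p C_{H_p}(g_p)$, a support-finiteness argument for the primes outside $\pi(g)$, and Lemma \ref{center} applied to each infinite $H_p$. The paper instead applies \cite[Corollary 1.3]{DMSrestricted} once, to $H$ itself, using the stronger observation recorded in its preliminaries (and which you yourself invoke in your $H\cap\U_k=1$ case): in a \emph{pronilpotent} group every $\gamma_k$-value is a u$_k$-commutator, because one can project each entry onto the common set of primes $\pi(x_1)\cap\dots\cap\pi(x_k)$ without changing the commutator. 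This gives the dichotomy ``$H$ finite or nilpotent of class at most $k-1$'' globally, after which the whole lemma is one line: if $1\ne y\in H\cap\U_k$ then $Z(H)\le C_G(y)$ is finite, so Lemma \ref{center} forces $H$ to be finite. So your Sylow-by-Sylow bookkeeping is sound but buys nothing, since the global uniformity fact you need is already available (and used) in your own argument; conversely, the paper's version avoids the centralizer-decomposition step entirely. One small imprecision on your side: in your first paragraph, if some entry $a_i$ is trivial and others are not, then $\pi(a_1)=\dots=\pi(a_k)$ fails; this is harmless because the commutator is then trivial and $1=[1,\dots,1]$ is a u$_k$-commutator, but the claim as written should be qualified.
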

\begin{proof}
Since $H$ is a direct product of its Sylow subgroups, it follows that any $\gamma_k$-value of $H$ lies in $\U_k$ and, by  \cite[Corollary 1.3]{DMSrestricted},  $H$ is either finite or  nilpotent of class at most $k-1$. Assume  $H \cap  \U_k \neq 1$ and let $y$ be a nontrivial element of $H \cap  \U_k $.  Since $Z(H) \le C_G(y)$, it follows that $Z(H)$ is finite. In view of  Lemma \ref{center} this implies that $H$ is finite. 
\end{proof} 

\begin{proposition}\label{mainFinite} Let  $G$ be a profinite group in which the centralizers of {\rm u}$_k$-commutators are finite. Then $G$ is either finite or  nilpotent of class at most $k-1$. 
\end{proposition}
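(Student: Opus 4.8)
The plan is to reduce the statement to showing that an \emph{infinite} group $G$ satisfying the hypothesis is pronilpotent. Indeed, $\U_k$ generates $\gamma_k(G)$ by Lemma \ref{gen1}, and if $G$ is infinite and pronilpotent then Lemma \ref{finite} applied to $H=G$ forces $\U_k=\{1\}$, so that $\gamma_k(G)=1$ and $G$ is nilpotent of class at most $k-1$. Thus I assume from now on that $G$ is infinite and aim to prove $\gamma_\infty(G)=1$.

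First I would secure a large, self-centralizing pronilpotent normal subgroup by passing to the generalized Fitting subgroup $F^*(G)=F(G)E(G)$, which satisfies $C_G(F^*(G))\le F^*(G)$, and eliminating the layer $E(G)$. Let $L$ be a (finite quasisimple) component. Since $L$ is perfect and nonabelian, Lemma \ref{gen1} supplies a nontrivial u$_k$-commutator $g\in L$. Every conjugate of $L$ distinct from $L$ commutes with $L$, hence lies in $C_G(L)\le C_G(g)$; so if $L$ has infinitely many conjugates then $C_G(g)$ is infinite, while if it has only finitely many then $N_G(L)$, and with it $C_G(L)\le C_G(g)$, is open, and $C_G(g)$ is again infinite. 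In either case the hypothesis is violated, so $E(G)=1$ and $F:=F(G)$ satisfies $C_G(F)\le F$. As $G$ is infinite, $F$ must then be infinite --- otherwise $G/C_G(F)$ would embed in the finite group $\operatorname{Aut}(F)$ --- and Lemma \ref{finite} gives $F\cap\U_k=1$; in particular $O_p(G)\cap\U_k=1$ for every prime $p$.

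The technical core is a coprime computation proving that every $p'$-element centralizes $O_p(G)$. Fix a prime $p$ and a $q$-element $x$ with $q\neq p$, and set $V=O_p(G)$ and $X=\langle x^{V}\rangle\le V\langle x\rangle$. Now $\gamma_k(X)$ is generated by commutators $[y_1,\dots,y_k]$ in which each $y_i$ is a conjugate of $x^{\pm1}$; these are u$_k$-commutators, and since each $y_i$ is congruent to a power of $x$ modulo the normal subgroup $V$, every such commutator lies in $V\cap\U_k=1$. Hence $\gamma_k(X)=1$, so $X$ is pronilpotent; being generated by $q$-elements inside the pro-$\{p,q\}$ group $V\langle x\rangle$, it is a pro-$q$ group. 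Thus $X$ is a normal $q$-subgroup of $V\langle x\rangle$ with $X\cap V=1$, whence $[V,x]\le V\cap X=1$. Consequently $C_G(O_p(G))$ contains every $p'$-element, and $G/C_G(O_p(G))$ is a pro-$p$ group.

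Finally I would assemble these facts. Since $F=\prod_p O_p(G)$, we get $C_G(F)=\bigcap_p C_G(O_p(G))=Z(F)$, which is abelian, and $G/C_G(F)$ embeds in $\prod_p G/C_G(O_p(G))$, a product of pro-$p$ groups and so pronilpotent; hence $\gamma_\infty(G)\le Z(F)$. Let $B_p$ denote the projection of $\gamma_\infty(G)$ to $Z(O_p(G))$. The identity $\gamma_\infty(G)=[\gamma_\infty(G),G]$ gives $B_p=[B_p,G]$, where $G$ acts on the abelian pro-$p$ group $Z(O_p(G))$ through the pro-$p$ group $G/C_G(O_p(G))$; since a $p$-group acts nilpotently on a $p$-group, $B_p=[B_p,G]$ forces the image of $B_p$ to be trivial in every finite quotient, so $B_p=1$ for all $p$ and $\gamma_\infty(G)=1$. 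I expect the main obstacle to be the very first reduction, namely establishing $C_G(F(G))\le F(G)$ (equivalently, eliminating nonabelian composition factors) in a group that is not assumed prosoluble: this is what lets the coprime step and the subsequent Fitting-subgroup bookkeeping take hold, whereas the coprime computation itself, though the heart of the proof, is comparatively clean.
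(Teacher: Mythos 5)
Your overall strategy (reduce to showing that an infinite group satisfying the hypothesis is pronilpotent), your elimination of components, your coprime computation, and the final bookkeeping are individually sound; in fact the coprime step can be made independent of the Fitting-subgroup reduction, since $O_p(G)\cap\U_k=1$ follows directly from the hypothesis when $G$ is infinite (if $O_p(G)$ is infinite apply Lemma \ref{finite}; if $O_p(G)$ is finite then $C_G(O_p(G))$ is open, hence infinite, and is contained in $C_G(y)$ for any nontrivial $y\in O_p(G)\cap\U_k$). The fatal problem is the step you rely on at the start and again at the end: the claim that $F^*(G)=F(G)E(G)$ contains its own centralizer, so that $E(G)=1$ yields $C_G(F(G))\le F(G)$. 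That is a theorem about \emph{finite} groups, and it is false for profinite groups. Take a nonabelian finite simple group $S$, set $W_1=S$, $W_{n+1}=S\wr W_n$ (regular wreath product), and let $W$ be the inverse limit along the maps $W_{n+1}\to W_n$ killing the base. One checks that $F(W_n)=1$ and that the components of $W_n$ are exactly the simple factors of the base, so they lie in $\ker(W_n\to W_{n-1})$; consequently $F(W)=1$ and $W$ has no components at all, because a finite quasisimple subnormal subgroup $L\le W$ would, for large $n$, map isomorphically onto a component of $W_n$, forcing its image in $W_{n-1}$ to be trivial --- impossible. Thus $F^*(W)=1$ while $C_W(F^*(W))=W$. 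The same phenomenon occurs prosolubly (alternate two primes in the wreath products), so your parenthetical ``equivalently, eliminating nonabelian composition factors'' is also incorrect: trivial layer and trivial Fitting subgroup can coexist with $G$ infinite.

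Because of this, after you prove $E(G)=1$ you have no lower bound on $F(G)$ whatsoever: it may be trivial, and then ``$F$ is infinite'', ``$F\cap\U_k=1$'', and the closing chain $C_G(F)=Z(F)$, $\gamma_\infty(G)\le Z(F)$, $B_p=[B_p,G]\Rightarrow B_p=1$ all evaporate; you would only obtain $\gamma_\infty(G)\le C_G(F)=G$, which says nothing. What remains of your argument is the conditional statement ``if $C_G(F(G))\le F(G)$ then $G$ is pronilpotent'', and establishing that hypothesis is not easier than the proposition itself. This is precisely where the paper does different and unavoidable work: it never invokes $F^*$, but instead assumes $G$ is not pronilpotent, uses the profinite Burnside normal $p$-complement theorem to get a $p'$-element $a$ acting coprimely and nontrivially on a pro-$p$ subgroup $P$, extracts a nontrivial u$_k$-commutator $[x,{}_k a]\in P$ via Lemma \ref{bul2} (so Sylow $p$-subgroups are finite by Lemma \ref{finite}), and then uses a fixed-point-free action of an element of $\U_k$ of prime order on an open normal subgroup $K$: Thompson--Higman makes $K$ nilpotent, Lemma \ref{bul2} puts $K$ inside $\U_k$, and Lemma \ref{finite} makes $K$, hence $G$, finite. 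Your proposal has no substitute for this mechanism, which is exactly what handles profinite groups whose Fitting subgroup and layer are both small; until you supply one, the proof does not go through.
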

\begin{proof} If $G$ is pronilpotent, then the result is immediate from the previous lemma. Assume that $G$ is not pronilpotent. We want to prove that $G$ is finite. We will use the fact that the Sylow subgroups of $G$ are either finite or  nilpotent of class at most $k-1$. 

The profinite version of Burnside's theorem  \cite[Theorem 3.3]{gilotti-ribes-serena} says that if $N_G(P)/C_G(P)$ is a pro-$p$ group for every nontrivial pro-$p$ subgroup $P$ of $G$, then $G$ has a normal $p$-complement. Note that a group is pronilpotent whenever it has normal $p$-complement for every prime $p$. As our group $G$ is not pronilpotent, there exists a prime $p$ and a pro-$p$ subgroup $P\leq G$ such that $N_G(P)/C_G(P)$ contains a nontrivial  $p'$-element. So $N_G(P)\setminus C_G(P)$ contains a  $p'$-element $a$ that induces a nontrivial coprime automorphism on $P$. We deduce from Lemma \ref{bul2} that there exists a nontrivial $p$-element  $y=[x,_k a]\in\U_k \cap P$. It follows from Lemma \ref{finite} that the Sylow $p$-subgroups of $G$ are finite. 

Let $N$ be an open normal pro-$p'$ subgroup of $G$ intersecting $C_G(y)$ trivially. Then $C_N(y)=1$ and $y$ acts coprimely on $N$.
Let $q$ be a prime in $\pi(N )$.  By Lemma \ref{bul-invariant},  there is a $y$-invariant Sylow $q$-subgroup $Q$ of $N$.  
   
 As $C_N(y)=1$, the map $x\mapsto [x,y]$  is injective on $Q$. Hence, by  Lemma \ref{bul2},  $\U_k \cap Q \neq 1$. Thus $Q$ is a finite $q$-group by Lemma \ref{finite},  and the map  $x \mapsto [x,y]$ is also surjective. 
 Therefore  $Q\subseteq \U_k$  by  Lemma \ref{bul2}. 
 
 Choose an element $z\in Q$ of prime order $q$.
Since $C_G(z)$ is finite, there exists an open normal $q'$-subgroup $K$ of $G$ that has trivial intersection with $C_G(z)$. In particular, $z$ acts coprimely and fixed-point-freely on $K$. As $z$ has prime order, combining  the well-known results of Thompson and Higman \cite{hi,tho} (see also \cite[Theorem 2.6.2]{wilson}) we conclude that $K$ is nilpotent. Again, by Lemma \ref{bul2}, we deduce that $K\subseteq \U_k$, whence $K$ is finite by Lemma \ref{finite}. It follows that $G$ is finite. 
\end{proof}

\section{ On groups in which uniform commutators are FC-elements}  

In this section we will prove the following proposition. 

\begin{proposition}\label{mainFC} Let  $G$ be a profinite group such that 
  $\U_k \subseteq \Delta(G)$. Then $G$ is virtually nilpotent. 
\end{proposition}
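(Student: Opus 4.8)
The plan is to reduce the statement to Proposition \ref{profinite-FC}, which already settles the case in which \emph{all} $\gamma_k$-values are FC-elements. The bridge between the two hypotheses is the identity $[x,{}_k y]=[y^{-xy},{}_{k-1}y]$ implicit in the proof of Lemma \ref{gen1}: its right-hand side is a left-normed commutator in the $k$ entries $y^{-xy},y,\dots,y$, all of which have prime set $\pi(y)$, so every Engel-type commutator $[x,{}_k y]$ with $k\ge 2$ belongs to $\U_k$. Hence the hypothesis $\U_k\subseteq\Delta(G)$ forces every $[x,{}_k y]$ to be an FC-element. Moreover, on any pronilpotent subgroup $H\le G$ every $\gamma_k$-value is a u$_k$-commutator, hence an FC-element of $G$ and, a fortiori, of $H$; thus Proposition \ref{profinite-FC} applies verbatim to $H$ and shows that \emph{every} pronilpotent subgroup of $G$ is virtually nilpotent. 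In particular each Sylow subgroup and the Fitting subgroup are virtually nilpotent, and the real task is to glue this local information into a single open nilpotent subgroup of $G$.

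To control the non-pronilpotent part of $G$ I would argue as in Proposition \ref{mainFinite}. If $G$ is pronilpotent we are already done, so suppose not. By the profinite Burnside normal $p$-complement theorem there are a prime $p$, a pro-$p$ subgroup $P$ and a $p'$-element $a\in N_G(P)\setminus C_G(P)$ inducing a nontrivial coprime automorphism on $P$. By Lemma \ref{bul2} the set $\{[g,a]\mid g\in P\}$ coincides with $\{[g,{}_k a]\mid g\in P\}$, and each $[g,{}_k a]=[a^{-ga},{}_{k-1}a]$ lies in $\U_k$ by the identity above; since for a coprime action this set of commutators is the whole subgroup $[P,a]$, we get $[P,a]\subseteq\U_k\subseteq\Delta(G)$. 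Thus every commutator produced by a coprime action lands in the FC-centre of $G$; applying this to an $a$-invariant Sylow subgroup for each prime (Lemma \ref{bul-invariant}) shows that all the ``cross-prime'' commutators of $G$ are FC-elements.

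To upgrade this qualitative information to the quantitative conclusion I would invoke the Baire category theorem as in Proposition \ref{profinite-FC}. The sets $\Delta_j=\{g\in G\mid |g^G|\le j\}$ are closed, and on a pronilpotent section their union contains all $\gamma_k$-values; running the category argument on $P\times\cdots\times P$ (where the $\gamma_k$-map lands entirely in $\bigcup_j\Delta_j$) produces an open normal subgroup on which the $\gamma_k$-values have \emph{boundedly} many conjugates. On such a piece Lemma \ref{comb1}, applied to the multilinear word $\gamma_k$, together with Theorem \ref{dadada}, yields a nilpotent subgroup of bounded index and bounded class, whose closure is open and nilpotent. This gives each relevant Sylow subgroup a genuinely bounded nilpotency structure, while the coprime-action analysis of the previous paragraph is what should keep distinct primes from interacting wildly, allowing a reduction to the metapronilpotent case via a Sylow basis and its basis normalizer ($G=\gamma_\infty(G)T$ with $T$ pronilpotent and $\gamma_\infty(G)$ pronilpotent).

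The step I expect to be the main obstacle is precisely this final assembly: turning virtual nilpotency of every pronilpotent section, plus the fact that all coprime commutators are FC, into a \emph{single} open nilpotent subgroup of $G$ — equivalently, bounding the Fitting height of $G$ and proving that the interaction between different primes is finite. The difficulty is structural: the hypothesis only yields \emph{open} centralizers, which is far weaker than the \emph{finite} centralizers exploited in Proposition \ref{mainFinite}, so the fixed-point-free arguments of Thompson and Higman are unavailable here and must be replaced throughout by Baire-category boundedness feeding into Lemma \ref{comb1} and Theorem \ref{dadada}.
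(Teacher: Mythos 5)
Your proposal correctly handles the pronilpotent pieces (every pronilpotent subgroup of $G$ is virtually nilpotent, via Proposition \ref{profinite-FC}), but that is also where it stops: the step you yourself flag as ``the main obstacle'' --- assembling this local information into a single open nilpotent subgroup of $G$ --- is precisely the content of the proposition, and none of the tools you propose for it actually perform the assembly. Concretely, three ingredients of the paper's argument are missing. First, the reduction to the prosoluble case: Burnside's normal $p$-complement theorem and the Baire-category/Lemma \ref{comb1}/Theorem \ref{dadada} machinery give nothing beyond what Proposition \ref{profinite-FC} already gave for a single Sylow subgroup; the paper instead uses that a Sylow $2$-subgroup is virtually nilpotent, hence soluble, then the nonsoluble-length bound of \cite{KS2} together with Lemma \ref{simple} (a Cartesian product of nonabelian finite simple groups with $\U_k\subseteq\F$ is finite) to produce an open prosoluble subgroup (Lemma \ref{prosol}). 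Second, the statement that ``the coprime-action analysis keeps distinct primes from interacting wildly'' is exactly the nontrivial technical point, proved in Lemma \ref{idea} and Lemma \ref{conj}: in a metapronilpotent group every $\gamma_k$-value is a product of $k$-boundedly many u$_k$-commutators, obtained by partitioning the prime sets of the entries and using Lemma \ref{bul2} on the normal pro-$\tilde\pi$ pieces; without this, the hypothesis $\U_k\subseteq\F$ says nothing about a general $\gamma_k$-value, and Proposition \ref{maingamma} cannot be invoked. Third, your reduction ``to the metapronilpotent case via a Sylow basis and $G=\gamma_\infty(G)T$'' is circular: the existence of a Sylow basis needs prosolubility, and the pronilpotency of $\gamma_\infty(G)$ is the definition of metapronilpotency, which is not given. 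The paper closes the argument differently, via Lemma \ref{F(G)} ($F(G)\neq1$ for prosoluble $G$ under the hypothesis): once $F_2(G)$ is shown virtually nilpotent (by the metapronilpotent case), $F_2(G)/F(G)$ is finite and Lemma \ref{F(G)}, applied to an open normal subgroup of $G/F(G)$ avoiding $F(G/F(G))$, forces $G/F(G)$ to be finite.

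There is also a specific unjustified claim in your second paragraph: ``since for a coprime action this set of commutators is the whole subgroup $[P,a]$''. The set $\{[g,a]\mid g\in P\}$ need not be a subgroup, and Lemma \ref{bul2} only identifies it with $\{[g,{}_k a]\mid g\in P\}$; in Proposition \ref{mainFinite} the paper obtains that a whole subgroup consists of such commutators only when the action is fixed-point-free \emph{and} the subgroup is finite, so that the injective map $x\mapsto[x,a]$ is surjective. Under the present FC hypothesis the centralizers are merely open, so that device is unavailable --- which is another reason the proof must go through Lemmas \ref{idea}, \ref{conj}, \ref{simple}, \ref{prosol} and \ref{F(G)} rather than through the scheme of Proposition \ref{mainFinite}.
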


 We will need a technical  results  about commutators.

\begin{lemma}\label{lem2} 
Let $x_1, \dots , x_n $ be elements of a group $G$ and let $y\in\{x_1, \dots , x_n\}$. Then $[x_1, \dots, x_n]$ is a product of $2^{n-1}$ conjugates of $y^{\pm 1}$. 
\end{lemma}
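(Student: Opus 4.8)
The plan is to argue by induction on $n$, using the standard expansions of a commutator $[a,b]$ as a product of two conjugates, and splitting into two cases according to whether the distinguished variable $y$ is the last entry $x_n$ or one of the earlier ones. Recall the two basic identities $[a,b]=a^{-1}a^{b}$ and $[a,b]=(b^{a})^{-1}b$: the first exhibits $[a,b]$ as a product of two conjugates of $a^{\pm 1}$, the second as a product of two conjugates of $b^{\pm 1}$. The base case $n=1$ is immediate, since $[x_1]=x_1=y$ is a single ($=2^0$) conjugate of $y$.

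For the inductive step, write $c=[x_1,\dots,x_{n-1}]$, so that $[x_1,\dots,x_n]=[c,x_n]$. If $y=x_n$, then by the second identity above I would express $[c,x_n]=(x_n^{\,c})^{-1}x_n$ as a product of two conjugates of $y^{\pm 1}$; since $2\le 2^{n-1}$ for $n\ge 2$, this settles this case (padding trivially with $yy^{-1}$ factors if one insists on the exact count $2^{n-1}$). If instead $y\in\{x_1,\dots,x_{n-1}\}$, then by the induction hypothesis $c$ is a product of $2^{n-2}$ conjugates of $y^{\pm 1}$. Here I would use the first identity, $[c,x_n]=c^{-1}c^{\,x_n}$, and observe that $c^{-1}$ is again a product of $2^{n-2}$ conjugates of $y^{\pm 1}$ (inverting reverses the order of the factors and inverts each, but a conjugate of $y^{\pm 1}$ stays a conjugate of $y^{\mp 1}$), while $c^{\,x_n}$ is likewise a product of $2^{n-2}$ conjugates of $y^{\pm 1}$ (conjugating a conjugate of $y^{\pm 1}$ yields another such conjugate). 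Multiplying, $[c,x_n]$ becomes a product of $2^{n-2}+2^{n-2}=2^{n-1}$ conjugates of $y^{\pm 1}$, as required.

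There is no genuine obstacle here; the argument is a routine induction. The only point requiring mild care is the case distinction: one must pick the identity that keeps track of the \emph{correct} variable, expanding $[c,x_n]$ via $(x_n^{\,c})^{-1}x_n$ when $y=x_n$ and via $c^{-1}c^{\,x_n}$ otherwise, so that the conjugates produced are always conjugates of the designated $y^{\pm 1}$. The exponential growth of the bound reflects precisely the doubling $2^{n-2}\mapsto 2^{n-1}$ that occurs each time one forms $c^{-1}c^{\,x_n}$ in the second case.
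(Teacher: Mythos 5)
Your proof is correct and follows essentially the same route as the paper's: an induction on $n$ using the expansions $[c,x_n]=c^{-1}c^{x_n}$ and $[c,x_n]=(x_n^{c})^{-1}x_n$ to double the number of conjugates at each step. If anything, you are slightly more careful than the paper, which leaves implicit both the case distinction on where $y$ sits and the padding by $yy^{-1}$ needed to reach the exact count $2^{n-1}$ when $y=x_n$.
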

\begin{proof} 
Using basic commutator identities it can be easily seen  that  if $v$ is a product of $t$ conjugates of $y^{\pm 1}$,  then both
 $[v, x]= v^{-1} v^x$ and $[x,v] =v^{-x} v$ are  products  of $2t$ conjugates of $y^{\pm 1}$. Then, by induction on $n$, it follows that $[x_1, \dots, x_n]$ is a product of  $2^{n-1} $ conjugates of $y^{\pm 1}$. 
\end{proof}

Suppose that a set $\Sigma$ is a union of finitely many subsets $\Sigma_1,\dots,\Sigma_m$. Then $\Sigma$ admits a partition with blocks $\Omega_1,\dots,\Omega_s$ such that $s \le 2^m$ and each $\Sigma_i $ is a  (disjoint) union of some of the blocks $\Omega_1,  \dots,\Omega_s$. Formally, we can take $\Omega_1,\dots,\Omega_s$ to be all the nonempty sets of the form 
  \[ 
 \Gamma_1\cap \Gamma_2\cap \dots\cap \Gamma_m, \quad \textrm{ where } 
\Gamma_i  \textrm{ is either } \Sigma_i  \textrm{ or } \Sigma\setminus\Sigma_i   \textrm{ for all }  i.\]

This will be used in the next lemma.
 
\begin{lemma}\label{idea} Let $N$ be a normal pronilpotent subgroup of a profinite group G. Let $X\subseteq G$ be the set of commutators $[g_1,...,g_k]$ where at least one of the entries $g_1,...,g_k$ belongs to $N$. Then every element of $X$ is a product of $k$-boundedly many elements in $\U_k\cap N$.
\end{lemma}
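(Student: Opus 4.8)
The plan is to pass to a single ``Sylow block'' of the pronilpotent group $N$ by means of a partition of the prime divisors, and then, on each block, to separate the non-coprime and the coprime contributions of the outer entries, converting the latter into genuine uniform commutators by coprime action.

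Say $g_j\in N$. Since $N$ is normal, Lemma \ref{lem2} already exhibits $w=[g_1,\dots,g_k]$ as a product of $2^{k-1}$ conjugates of $g_j^{\pm1}$, so $w\in N$. As $N$ is pronilpotent, each Sylow subgroup $N_p$ is characteristic in $N$, hence normal in $G$, and therefore $N_\Omega:=\prod_{p\in\Omega}N_p$ is normal in $G$ for every set of primes $\Omega$. Writing $(g_j)_\Omega$ for the $\Omega$-component of $g_j$, and using that the $N_p$ are normal and pairwise commute (so that $[g_1,\dots,(g_j)_p,\dots,g_k]\in N_p$), one checks that $[g_1,\dots,(g_j)_\Omega,\dots,g_k]=\prod_{p\in\Omega}[g_1,\dots,(g_j)_p,\dots,g_k]$ is exactly the $\Omega$-component $w_\Omega$ of $w$. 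I would then apply the partition remark preceding the lemma to $\Sigma=\pi(g_1)\cup\dots\cup\pi(g_k)$ with the subsets $\Sigma_i=\pi(g_i)$: this produces at most $2^k$ blocks $\Omega$ with $w=\prod_\Omega w_\Omega$, and on each block every entry satisfies either $\Omega\subseteq\pi(g_i)$ or $\Omega\cap\pi(g_i)=\emptyset$. It thus suffices to write each $w_\Omega$ as a product of $k$-boundedly many elements of $\U_k\cap N_\Omega$.

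Fix a block $\Omega$ and set $M=N_\Omega$. Decompose each entry as a commuting product $g_i=a_ib_i$, where $a_i$ has prime set contained in $\Omega$ and $b_i$ has prime set disjoint from $\Omega$ (for $i=j$ take $a_j=(g_j)_\Omega\in M$ and $b_j=1$); thus each $b_i$ acts as a coprime automorphism of the pro-$\Omega$ group $M$. I would expand the nested commutator $w_\Omega=[a_1b_1,\dots,a_kb_k]$ using $[uv,x]=[u,x]^v[v,x]$ and $[x,uv]=[x,v][x,u]^v$, collecting it into a product of $k$-boundedly many nested sub-commutators, each slot of which is a conjugate of some $a_i$ or some $b_i$. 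Two kinds are immediate. If every slot is an $\Omega$-part $a_i$, then all entries have prime set contained in $\Omega$ and the sub-commutator is a conjugate of an element of $\U_k\cap M$. If the outermost slot is a coprime part $b_i$, the sub-commutator has the form $[m',b_i]$ with $m'\in M$; by Lemma \ref{bul2} it equals $[x,{}_kb_i]$ for some $x\in M$, and the identity $[x,{}_ky]=[y^{-xy},{}_{k-1}y]$ rewrites it as $[b_i^{-xb_i},{}_{k-1}b_i]\in\U_k\cap M$, of prime set $\pi(b_i)$ since all its entries are conjugate to $b_i^{\pm1}$.

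The crux, which I expect to be the main obstacle, is the remaining \emph{mixed} sub-commutators, in which a coprime part $b_i$ is nested \emph{inside} a later non-coprime part $a_{i'}$, the prototype being $[[m,b_i],a_{i'}]$ with $m\in M$: this is neither uniform as written nor literally of the form $[\,\cdot\,,b_i]$, so Lemma \ref{bul2} does not apply directly. The plan here is to use the Hall--Witt identity to trade such a term against commutators in which $b_i$ has been moved to the outermost position (falling under the coprime case above), at the cost of $k$-boundedly many correction terms, and to exploit the coprime decomposition $M=[M,b_i]\,C_M(b_i)$ together with $[M,b_i]=[M,{}_kb_i]$ so that each surviving contribution is again an honest commutator with outermost operator $b_i$, hence a uniform $k$-step commutator of prime set $\pi(b_i)$ by the mechanism of the previous paragraph. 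Since each block yields $k$-boundedly many factors and there are at most $2^k$ blocks, the total count is $k$-bounded. The delicate points are the bookkeeping that keeps every factor a genuine $k$-step (rather than shorter) uniform commutator, and the verification that the collection process leaves no mixed term outside the reach of the coprime machinery.
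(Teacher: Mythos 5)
Your set-up (partition the primes, split each entry into its $\Omega$-part and its coprime part, and use Lemma \ref{bul2} to turn commutators with a coprime operator into uniform $k$-step commutators) is essentially the paper's strategy, and your two easy cases are fine, modulo a small inaccuracy: $[g_1,\dots,(g_j)_\Omega,\dots,g_k]$ is \emph{not} exactly the $\Omega$-component of $w$ (expanding the $j$-th slot introduces conjugations of the later entries, and your displayed identity $w_\Omega=[a_1b_1,\dots,a_kb_k]$ literally gives $w$ itself); this is repairable, since conjugating entries does not change their prime sets. The real problem is that the proof is incomplete exactly where the lemma is hard: the mixed configurations are handled only by a declared plan (Hall--Witt, plus the decomposition $M=[M,b_i]C_M(b_i)$) whose ``delicate points'' you acknowledge and never verify. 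Nothing is actually proved about a term such as $[[m,b_i],a_{i'}]$, and configurations in which the coprime parts occur \emph{before} the slot lying in $M$, e.g.\ $[b_1,b_2,a_j,\dots]$, do not even fit your prototype, since there the innermost bracket is not in $M$ and Lemma \ref{bul2} has nothing to act on. As it stands this is a genuine gap.

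The gap closes far more simply than by Hall--Witt, and this is what the paper does, using only Lemma \ref{lem2} and Lemma \ref{bul2}. Two remarks suffice. First, $\U_k\cap N$ is symmetric and closed under conjugation, so once some inner bracket $c$ lies in $\U_k\cap N$, all further bracketing is harmless: $[c,x]=c^{-1}c^{x}$, so iterating over the remaining entries writes the full commutator as a product of boundedly many conjugates of $c^{\pm1}$, all again in $\U_k\cap N$ (this is precisely the mechanism of Lemma \ref{lem2}). That kills your prototype in one line: $[m,b_i]\in\U_k\cap M$ by Lemma \ref{bul2}, and then $[[m,b_i],a_{i'}]=c^{-1}c^{a_{i'}}$. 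Second, when the coprime entries precede the $M$-slot, apply Lemma \ref{lem2} in the opposite direction: the inner part is a product of boundedly many conjugates of a single coprime entry $b^{\pm1}$, so commutating the $M$-slot against it expands into boundedly many factors $[a,b']$ with $a\in M$ and $b'$ conjugate to $b$, each lying in $\U_k\cap M$ by Lemma \ref{bul2}. Distinguishing whether the first entry coprime to $\Omega$ occurs after or before the slot in $M$, these two remarks cover all mixed terms; the Hall--Witt identity, the splitting $M=[M,b]\,C_M(b)$, and the worry about factors being ``genuinely $k$-step'' all become unnecessary (a conjugate of a u$_k$-commutator is a u$_k$-commutator, and the factors produced are exactly such conjugates or inverses).
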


\begin{proof}  
 Let us fix an element $ [g_1,\dots,g_k]\in X$, with $g_i \in N$. 
  The set 
  \[\pi (g_1) \cup \dots \cup \pi(g_k) \] 
  admits a finite partition with blocks $\pi_1,\dots, \pi_s$ with the property that each $\pi(g_t)$ is a union of some of these blocks. Note that $s\le 2^k$. 
  So each element $g_t$ is a product  
  \[ g_t=\prod_{j=1}^s y_{tj} \textrm{ where }y_{tj}\in\langle g_t\rangle,\,\, \pi(y_{tj})=\pi_j \textrm{ whenever } y_{tj}\ne 1. \]
    Repeatedly using the commutator  identity $[ab,c]=[a^b,c^b]\,[b,c]$ we obtain that $[g_1,\dots,g_k]$  is a product of $k$-boundedly many  elements of the form: $y=[u_1,\dots ,u_k]$, where, for each $t$, $u_t$ is a conjugate of some $y_{tj}$. In particular $u_i\in N$, and for each $t,j=1,\dots, k$ the sets  $\pi(u_t)$ and $\pi(u_j)$ are either equal or disjoint. 
  So, taking into account that $N$ is normal, we just need to show that every such  element $y= [u_1,\dots,u_k]$ is a product of $k$-boundedly many elements in $\U_k \cap N$.
	
Let $\tilde \pi=\pi(u_i)$. As $N$ is pronilpotent and normal in $G$, the subgroup  $O_{\tilde \pi}(N)$ is normal in $G$. 	In the sequel, we will  use the following fact:  If $a\in  O_{\tilde \pi}(N)$
 and $b$ induces a coprime automorphism on $O_{\tilde \pi}(N)$, then $[a,b]\in \U_k$. Indeed, by Lemma \ref{bul2}, there exists $d\in O_{\tilde \pi} (N)$ such that $[a, b]=[d,{}_{k}b] = [b^{-db},{}_{k-1}b]$.
	
	If $\tilde \pi=\pi(u_1)=\dots=\pi(u_k)$ then $y\in \U_k \cap N$. Otherwise there exists $j$ such that 
	\[ \pi(u_j)\ne \tilde\pi. \]
	
Assume that $j>i$. 
 We have that $x=[u_1,\dots,u_{j-1}]\in O_{\tilde \pi}(N)$, and $u_j$ induces a coprime automorphism on $O_{\tilde \pi}(N)$. Therefore, $[u_1,\dots,u_j]=[x,u_j] \in \U_k  \cap N$ and it follows from Lemma \ref{lem2},  that $y=[u_1,\dots,u_k]$  is a product of $k$-boundedly many elements in $\U_k  \cap N$, which are all conjugates of $[u_1,\dots,u_j]$ or $[u_1,\dots,u_j]^{-1}$. 
	
So we can assume that $j<i$.  	By Lemma \ref{lem2} the element $x=[u_1,\dots,u_{i-1}]$ is the product of $k$-boundedly many conjugates of $u_j$ or $u_j^{-1}$, so that $[x,u_i]^{-1}=[u_i,x]$ is the product of $k$-boundedly elements of the form 
$[a,b]$, where $a\in O_{\tilde \pi}(N)$ and $b$ is a conjugate of $u_j$, so it acts
coprimely on $O_{\tilde \pi}(N)$. All elements $[a,b]$ belong to $\U_k\cap N$, thus $[x,u_i]^{-1}=[u_1,\dots,u_i]^{-1}$  is the product of $k$-boundedly many elements  in $\U_k\cap N $. Since $\U_k$ is symmetric, the result follows. 
\end{proof}

\begin{lemma}\label{conj} 
Let  $G$ be a metapronilpotent group. 
 Then  every ${\gamma_k}$-value in $G$ is a product of $k$-boundedly many elements in $\U_k$.
\end{lemma}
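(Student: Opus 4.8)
The plan is to reduce every $\gamma_k$-value to the two situations already under control: commutators lying in the pronilpotent part (handled by Lemma~\ref{idea}) and $\gamma_k$-values of a pronilpotent group, which are automatically u$_k$-commutators.

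First I would fix the structure. Since $G$ is metapronilpotent, $N:=\gamma_\infty(G)$ is pronilpotent and normal in $G$ and $G/N$ is pronilpotent; in particular $G$ is an extension of a prosoluble group by a prosoluble group, hence prosoluble. Choosing a Sylow basis of $G$ with basis normalizer $T$, we have that $T$ is pronilpotent and $G=\gamma_\infty(G)T=NT$. I would also record the elementary but crucial fact that $\U_k$ is invariant under conjugation: if $u=[x_1,\dots,x_k]$ with $\pi(x_1)=\dots=\pi(x_k)$, then $u^g=[x_1^g,\dots,x_k^g]$ and $\pi(x_i^g)=\pi(x_i)$, so $u^g\in\U_k$.

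Next, given a $\gamma_k$-value $w=[g_1,\dots,g_k]$, I would write each $g_i=n_it_i$ with $n_i\in N$ and $t_i\in T$, and expand $w$ by repeatedly applying the commutator identity $[ab,c]=[a^b,c^b]\,[b,c]$, exactly as in the proof of Lemma~\ref{idea}. This expresses $w$ as a product of $k$-boundedly many conjugates of commutators of the form $[c_1,\dots,c_k]$ with $c_i\in\{n_i,t_i\}$. It then remains to treat each such factor. If some $c_i$ equals $n_i\in N$, then $[c_1,\dots,c_k]$ has an entry in $N$, and so does every conjugate of it, since $N$ is normal; hence by Lemma~\ref{idea} each such factor is a product of $k$-boundedly many elements of $\U_k\cap N$. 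The only remaining factor is $[t_1,\dots,t_k]$ together with its conjugates, a $\gamma_k$-value of the pronilpotent group $T$. Since a pronilpotent group is the direct product of its Sylow subgroups, such a value can be written as $[x_1,\dots,x_k]$ with $\pi(x_1)=\dots=\pi(x_k)$ equal to $\pi([t_1,\dots,t_k])$, so it lies in $\U_k$; by the conjugation-invariance recorded above, its conjugates lie in $\U_k$ as well. Collecting all the factors shows that $w$ is a product of $k$-boundedly many elements of $\U_k$.

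The main obstacle I anticipate is the bookkeeping in the expansion step: one must guarantee that expanding all $k$ entries simultaneously produces only $k$-boundedly many factors, each a conjugate of a commutator whose entries are genuinely among the $n_i$ and the $t_i$, rather than more complicated products. The cleanest route is to iterate, stage by stage, the identity $[ab,c]=[a^b,c^b]\,[b,c]$ used for Lemma~\ref{idea}, tracking the multiplicative bound across the $k$ stages. The secondary point requiring care is the verification that the pure $T$-term is truly a u$_k$-commutator with $\pi$-value equal to that of the commutator itself, which is precisely what makes membership in $\U_k$ survive conjugation.
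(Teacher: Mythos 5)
Your proposal is correct and follows essentially the same route as the paper's proof: write $G=NT$ with $N=\gamma_\infty(G)$ pronilpotent normal and $T$ a pronilpotent system normalizer, expand $[n_1t_1,\dots,n_kt_k]$ by the standard commutator identities into boundedly many factors, handle every factor with an entry in $N$ via Lemma~\ref{idea}, and note that the pure $T$-factor lies in $\U_k$ because $\gamma_k$-values of a pronilpotent group are u$_k$-commutators (your explicit remark that $\U_k$ is conjugation-invariant covers the conjugated factors, which the paper's bookkeeping avoids by a slightly different expansion). One inessential slip: the common $\pi$-set in the u$_k$-representation of $[t_1,\dots,t_k]$ can be taken to be $\bigcap_i\pi(t_i)$, which may properly contain $\pi([t_1,\dots,t_k])$, but membership in $\U_k$ only requires that the $\pi$-sets of the entries coincide, so nothing is affected.
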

\begin{proof} 
Write $G =N T$, where $T$ is a system normalizer and $N= \gamma_\infty (G)$. Since $G$ is metapronilpotent, it follows that $N$ and $T$ are pronilpotent. 
 Let $x$ be a $\gamma_k$-value of $G$. Then we can write $x$ in the form
\[x=[n_1t_1,\dots,n_kt_k],\]
where $n_i\in N$ and $t_i\in T$ for all $i=1,\dots,k$.
Using the basic commutators identities $[ab,c]=[a,c]^b[b,c]$, $[a,bc]=[a,c][a,b]^c$ and $ab=b^{a^{-1}}a=b a^{b}$ we can write $x$ as
\[x=[t_1,\dots,t_k]\prod_{i=1}^{2^k-1}[x_{i1},\dots,x_{ik}]\]
where all $x_{ij}$ are conjugates of elements in $N\cup T$ and at least one entry in each $\gamma_k$-value $[x_{i1},\dots,x_{ik}]$ belongs to $N$.
As $T$ is pronilpotent, $[t_1,\dots,t_k]\in \U_k$. Moreover,
by   Lemma \ref{idea},  all $[x_{i1},\dots,x_{ik}]$  are products of $k$-boundedly many elements in $\U_k\cap N$. This concludes the proof.
  \end{proof}  

Any finite group $H$ has a normal series each of whose factors either is soluble or is a direct product of nonabelian simple groups. The nonsoluble length of $H$, denoted by $\lambda(H)$, is defined as the minimum number of nonsoluble factors in a series of this kind. It is easy to see that the nonsoluble length $\lambda(H)$  is equal to the least positive integer $l$ such that there is a series of characteristic subgroups
$$1=L_0\le R_0 < L_1 \le R_1 < \dots\le R_{l}=H$$
 in which each quotient $L_i/R_{i-1}$ is a (nontrivial) direct product of nonabelian simple groups, and each quotient $R_i/L_i$ is soluble (possibly trivial).

It is natural to say that a profinite group $G$ has finite nonprosoluble length at most $l$ if $G$ has a normal series 
 $$1=L_0\le R_0 < L_1 \le R_1 < \dots\le R_{l}=G$$
in which each quotient  $L_i/R_{i-1}$ is a (nontrivial) Cartesian product of nonabelian
finite simple groups, and each quotient $R_i/L_i$ is prosoluble (possibly trivial). 
 In particular if, for some positive integer $m,$ all continuous
finite quotients of a profinite group $G$ have nonsoluble length at most $m,$ then $G$ 
has finite nonprosoluble length at most $m$ (see e.g.  \cite[Lemma 2]{wilson-compact}).

In the rest 
 of this section $G$ will be a  profinite group such that $\U_k\subseteq \Delta (G)$. Of course, in every quotient $G/N$ of $G$ the elements of $\U_k(G/N)$ are $FC$-elements. 
\begin{lemma}\label{simple} Suppose $G$ is a direct product of nonabelian finite simple groups. Then $G$ is finite.
\end{lemma}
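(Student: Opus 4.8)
The plan is to argue by contradiction, producing from the hypothesis $\U_k\subseteq\Delta(G)$ a single u$_k$-commutator with infinitely many conjugates. Since $G$ is profinite, I read ``direct product'' as the (topological) Cartesian product $G=\prod_{i\in I}S_i$ of nonabelian finite simple groups, with the product topology. If $I$ is finite, then $G$ is finite and there is nothing to prove, so I would assume that $I$ is infinite and seek a contradiction.

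First I would manufacture a nontrivial u$_k$-commutator in each factor. As $S_i$ is nonabelian simple it is perfect, so $\gamma_k(S_i)=S_i\neq 1$ for every $k$. By Lemma \ref{gen1} the set $\U_k(S_i)$ generates $\gamma_k(S_i)=S_i$, and hence $\U_k(S_i)\neq\{1\}$. Thus for each $i\in I$ there is a nontrivial element $g_i=[x_{i,1},\dots,x_{i,k}]\in S_i$ with $\pi(x_{i,1})=\dots=\pi(x_{i,k})$.

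Next I would assemble these local commutators into a global one. Setting $x_j=(x_{i,j})_{i\in I}\in G$ for $j=1,\dots,k$, the order of $x_j$ in the Cartesian product is the Steinitz least common multiple of the $|x_{i,j}|$, so $\pi(x_j)=\bigcup_{i\in I}\pi(x_{i,j})$. The decisive point is that this prime set does not depend on $j$: because $\pi(x_{i,1})=\dots=\pi(x_{i,k})$ for each fixed $i$, one has $\pi(x_1)=\dots=\pi(x_k)=\bigcup_{i\in I}\pi(x_{i,1})$. Since commutators in a Cartesian product are computed coordinatewise, $g:=[x_1,\dots,x_k]=(g_i)_{i\in I}$, and therefore $g\in\U_k(G)$.

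Finally I would read off the contradiction. The conjugacy class of $g$ in $G$ is the coordinatewise product $g^G=\prod_{i\in I}g_i^{S_i}$. For each $i$ the element $g_i$ is nontrivial in the nonabelian simple group $S_i$, so $C_{S_i}(g_i)\neq S_i$ and $|g_i^{S_i}|\geq 2$. As $I$ is infinite, $|g^G|=\prod_{i\in I}|g_i^{S_i}|$ is infinite, whence $g\notin\Delta(G)$, contradicting the standing hypothesis $\U_k\subseteq\Delta(G)$. Hence $I$ is finite and $G$ is finite. I do not expect a genuine obstacle here; the only step requiring care is checking that the uniform prime-set condition survives the passage from the factors to the product, which is precisely where the equality $\pi(x_1)=\dots=\pi(x_k)=\bigcup_{i}\pi(x_{i,1})$ is used.
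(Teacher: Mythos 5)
Your proof is correct and follows essentially the same route as the paper: pick a nontrivial u$_k$-commutator in each simple factor, assemble them into a single element of $\U_k(G)$ (your careful check that the uniform prime-set condition passes to the Cartesian product is the only detail the paper leaves implicit), and observe that its conjugacy class, equivalently the index of its centralizer, is infinite unless the index set is finite.
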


\begin{proof} Let $G=\Pi_{i\in I} S_i$, where each $S_i$ is a nonabelian finite simple group. In every factor $S_i$ choose a nontrivial element $a_i\in \U_{k}(S_i)$.  Note that $a=\prod_{i\in I}a_i\in \U_k$. Further, observe that $C_G(a)=\prod_{i\in I}C_{S_i}(a_i)$ has finite index in $G$ if and only if $I$ is finite. This proves the result. 
\end{proof}

\begin{lemma}\label{prosol} The group $G$ is virtually prosoluble.
\end{lemma}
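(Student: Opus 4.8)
The plan is to factor out the prosoluble radical and show that what remains is finite, with Lemma \ref{simple} doing the work of collapsing the nonabelian part. Let $R$ denote the prosoluble radical of $G$, that is, the maximal normal prosoluble subgroup; this is a closed normal subgroup. It suffices to prove that $\bar G = G/R$ is finite, for then $R$ is open and prosoluble and $G$ is virtually prosoluble. By construction $\bar G$ has trivial prosoluble radical.

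First I would push the $FC$-hypothesis down to the semisimple part. Since $\U_k(\bar G) = \U_k(G)R/R \subseteq \Delta(\bar G)$, any closed normal subgroup $V \le \bar G$ satisfies $\U_k(V) \subseteq \U_k(\bar G)$, and therefore $\U_k(V) \subseteq \Delta(V)$, because the intersection of a finite $\bar G$-conjugacy class with $V$ is a finite $V$-conjugacy class. Next I would invoke the profinite generalized Fitting theory: as $\bar G$ has trivial prosoluble radical, its socle $V = \mathrm{Soc}(\bar G)$ is a Cartesian product of nonabelian finite simple groups, it is characteristic in $\bar G$, and $C_{\bar G}(V) \le V$. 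Applying Lemma \ref{simple} to $V$, equipped with the $FC$-condition just established, forces $V$ to be finite.

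It then remains to deduce that $\bar G$ itself is finite. Since $V$ is finite, so is $\mathrm{Aut}(V)$, and hence $C := C_{\bar G}(V)$ has finite index in $\bar G$; moreover $C \le V$ by $C_{\bar G}(V) \le V$, so $C \le Z(V) = 1$. Consequently $\bar G$ embeds into $\mathrm{Aut}(V)$ and is finite, as required. If one prefers to sidestep the inclusion $C_{\bar G}(\mathrm{Soc}) \le \mathrm{Soc}$, the same conclusion follows by a short descent: $C_{\bar G}(V)$ is characteristic with trivial prosoluble radical, so were it nontrivial it would contain a nontrivial characteristic semisimple subgroup $V_C \le \mathrm{Soc}(\bar G) = V$; but $V_C$ centralizes $V \supseteq V_C$, forcing $V_C$ to be abelian, a contradiction.

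The main obstacle is the profinite structure theory used in the second step: that a nontrivial profinite group with trivial prosoluble radical admits a nontrivial characteristic socle that is a Cartesian product of nonabelian finite simple groups, and that this socle has trivial centralizer. These are the profinite analogues of the standard finite generalized Fitting facts, and I would justify them by the routine inverse-limit reduction to finite continuous quotients, checking that the prosoluble radical and the socle are compatible with the corresponding finite-group constructions. Once these are in place, the reduction to $G/R$, the transfer of the $FC$-condition to $V$, and the final embedding into a finite automorphism group are all essentially formal, and Lemma \ref{simple} supplies the decisive bound on the number of simple factors.
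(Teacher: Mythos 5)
Your reduction to $\bar G=G/R$ and the final embedding of $\bar G$ into $\mathrm{Aut}(V)$ are formally fine, but the step you yourself flag as the main obstacle is a genuine gap, and it cannot be repaired by the ``routine inverse-limit reduction'' you propose. It is false that a nontrivial profinite group with trivial prosoluble radical must contain a nontrivial closed normal subgroup that is a Cartesian product of nonabelian finite simple groups (let alone a characteristic socle $V$ with $C_{\bar G}(V)\le V$). Concretely, let $S$ be a nonabelian finite simple group, let $W_n$ be the $n$-fold iterated regular wreath product of copies of $S$, and let $W$ be the inverse limit of the $W_n$ along the natural projections (each of which kills the last base $B_n$). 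An easy induction, using that $B_n$ is a minimal normal subgroup of $W_n$ with $C_{W_n}(B_n)=Z(B_n)=1$, shows that every nontrivial normal subgroup of $W_n$ contains $B_n$; combined with a compactness argument this gives that every nontrivial closed normal subgroup of $W$ contains one of the kernels $K_m=\ker(W\to W_m)$, which form a strictly descending chain with trivial intersection. Hence $W$ has trivial prosoluble radical (each $K_m$ maps onto $K_m/K_{m+1}$, a nontrivial direct power of $S$), no minimal closed normal subgroup, and no nontrivial closed normal semisimple subgroup: if some $K_m$ sat inside a Cartesian product of nonabelian finite simple groups as a closed normal subgroup, then $K_m$ and its continuous quotients would again be such products, in which every proper nontrivial closed normal subgroup has nontrivial centralizer; but in $K_m/K_{m+2}\cong B_{m+2}\rtimes B_{m+1}$ the proper normal subgroup $K_{m+1}/K_{m+2}$ has trivial centralizer, since $B_{m+1}$ acts faithfully on $B_{m+2}$. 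The inverse-limit heuristic fails precisely here: the socles $\mathrm{Soc}(W_n)=B_n$ do not form a surjective inverse system --- each projection $W_{n+1}\to W_n$ annihilates $\mathrm{Soc}(W_{n+1})$, so the socles die in the limit. (Your preliminary steps are fine: the prosoluble radical exists, by Zorn's lemma and compactness, and the FC-condition does pass to quotients and to normal subgroups; the failure is the structure theorem you lean on.)

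This is exactly the difficulty the paper's proof is built to avoid: before any semisimple section can be exhibited, one must extract a finiteness statement from the hypothesis $\U_k\subseteq\Delta(G)$. The paper does this by observing that a Sylow $2$-subgroup $P$ is pronilpotent, so $\U_k(P)=P_{\gamma_k}$ consists of FC-elements and Proposition \ref{maingamma} makes $P$ virtually nilpotent, hence soluble of some finite derived length $d$; then \cite[Theorem 1.4]{KS2} bounds the nonsoluble length of every finite continuous quotient of $G$ by $d$, so $G$ has finite nonprosoluble length (see \cite[Lemma 2]{wilson-compact}). Only at that point does $G$ have a finite normal series with semisimple and prosoluble sections; Lemma \ref{simple} forces the semisimple sections to be finite, and the centralizer of these finitely many finite sections is an open prosoluble subgroup. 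If you want to keep your radical-quotient framework, you still need this nonprosoluble-length argument (or some equivalent use of the hypothesis) to manufacture the subgroup $V$ to which Lemma \ref{simple} is applied; as written, your proof assumes its existence for free, and that is where it breaks.
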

\begin{proof} Suppose that $G$ is not prosoluble. Let $P$ be a Sylow $2$-subgroup of $G$. Since $\U_k(P)=P_{\gamma_k}$, it follows from Proposition \ref{maingamma} that $P$ is virtually nilpotent. Thus $P$ is soluble, say of derived length $d$. By \cite[Theorem 1.4]{KS2} every finite image of $G$ has nonsoluble length at most $d$ and so also $G$ has nonprosoluble length at most $d$.

Let 
$$1=L_0\le R_0 < L_1 \le R_1 < \dots\le R_{s}=G$$ 
be a normal series of finite length in which each section $L_i/R_{i-1}$ is a (nontrivial) direct product of nonabelian finite simple groups, and each section $R_i/L_i$ is prosoluble (possibly trivial). By Lemma \ref{simple}, every section $L_i/R_{i-1}$ is finite. Let $$H=\{x\in G \mid \ [L_i,x]\leq R_{i-1}\text{ for all } i\}$$ be the centralizer in $G$ of the non-prosoluble sections of the series. This is an open prosoluble subgroup of $G$.
\end{proof}

\begin{lemma}\label{F(G)} If $G$ is prosoluble, then $F(G)\ne 1$.
\end{lemma}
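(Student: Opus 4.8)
The plan is to argue by contradiction: assuming $F(G)=1$ while $G\neq1$, I would manufacture a nontrivial \emph{finite} normal subgroup of $G$ and then extract from it a nontrivial normal pronilpotent subgroup, contradicting $F(G)=1$. The whole argument hinges on promoting a single nontrivial uniform commutator (which is an $FC$-element, hence has open centralizer) to a finite normal subgroup.

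First I would observe that $G$ cannot be pronilpotent: a nontrivial pronilpotent group equals its own Fitting subgroup, so that case would already give $F(G)=G\neq1$. Hence $\gamma_\infty(G)\neq1$, and since $\gamma_\infty(G)\le\gamma_k(G)$ we get $\gamma_k(G)\neq1$. By Lemma \ref{gen1} the set $\U_k$ generates $\gamma_k(G)$, so there is a nontrivial $u\in\U_k$. The hypothesis $\U_k\subseteq\Delta(G)$ gives $|u^G|<\infty$, so $C_G(u)$ is open in $G$.

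Next I would form the closed normal closure $D=\overline{\langle u^G\rangle}$ and set $M=C_G(D)$. Since $u^G=\{u_1,\dots,u_n\}$ is finite, we have $M=\bigcap_{i=1}^n C_G(u_i)$, a finite intersection of open subgroups, hence $M$ is an open normal subgroup of $G$. The key point is that $D\cap M$ is abelian: it lies in $D$ and is centralized by $M=C_G(D)$, so any two of its elements commute. As $D\cap M$ is also normal in $G$ and abelian, it is pronilpotent, whence $D\cap M\le F(G)=1$. Thus $D\cap M=1$; since $M$ has finite index, $D$ injects into the finite group $G/M$, so $D$ is finite, and $D\neq1$ because $u\in D$.

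Finally, $D$ is a nontrivial finite subgroup of the prosoluble group $G$, hence finite soluble, so its Fitting subgroup satisfies $F(D)\neq1$. Being characteristic in the normal subgroup $D$, $F(D)$ is normal in $G$, and being finite nilpotent it is pronilpotent; therefore $1\neq F(D)\le F(G)=1$, a contradiction, which proves $F(G)\neq1$. The main obstacle, and the only place the $FC$-hypothesis really enters, is this passage from one nontrivial uniform commutator to a genuinely finite normal subgroup: the condition $\U_k\subseteq\Delta(G)$ is exactly what makes $M$ open and forces the abelian normal subgroup $D\cap M$ to be trivial under the assumption $F(G)=1$. Prosolubility is then used only at the very end, to guarantee $F(D)\neq1$; some care with topological closures is needed, but no deeper input is required.
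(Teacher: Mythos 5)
Your proof is correct and follows essentially the same route as the paper's: both take a nontrivial $u\in\U_k$, use the $FC$-hypothesis to make the centralizer of its (closed) normal closure open, observe that the resulting abelian normal subgroup (your $D\cap M$, the paper's $Z(K)$) must vanish when $F(G)=1$, deduce that the normal closure is finite and soluble, and reach a contradiction via its nontrivial Fitting subgroup. Your only addition is the careful justification that a nontrivial element of $\U_k$ exists (handling the pronilpotent case separately), a point the paper leaves implicit.
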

\begin{proof}
Assume by contradiction that $F(G)=1$ and let $x$ be a nontrivial element in $\U_k$. Then $K=\langle x^G\rangle$ is generated by finitely many conjugates of $x$ and so  its centralizer has finite index in $G$. It follows that the centre $Z(K)$ of $K$ has finite index in $K$. Moreover $Z(K)\le F(G)=1$. Therefore $Z(K)=1$ and consequently $K$ is finite. As $G$ is prosoluble, $K$ is soluble and $F(K)\ne 1$. As $F(K)\le F(G)$,  this contradicts the assumption that $F(G)=1$.
\end{proof}

Now we are ready to prove Proposition \ref{mainFC}.
\begin{proof}[Proof of Proposition \ref{mainFC}] 
By Lemma \ref{prosol} we may assume that $G$ is prosoluble and so  $F(G)\ne 1$ by Lemma \ref{F(G)}. If $F(G)=G$, then all $\gamma_k$-values of $G$ are contained in $\U_k$ and the result is immediate from Proposition \ref{maingamma}.

Assume that $G$ is not pronilpotent and suppose first that $G=F_2(G)$. Then $G=NT$ with  $N=\gamma_{\infty}(G)$ and $T$ a system normalizer. In this situation $N$ and $T$ are pronilpotent and in view of Lemma \ref{conj} every $\gamma_k$-value in $G$ is a product of finitely many elements from $\U_k$. Thus all $\gamma_k$-values belong to the $FC$-centre $\F$ of $G$ and it follows from Proposition \ref{maingamma} that $G$ is virtually nilpotent. 
 
This proves that $F_2(G)$ is virtually nilpotent, whence  $F_2(G)/F(G)$ is finite. Let $\bar G=G/F(G)$, and let $\bar K$ be an open normal subgroup of $\bar G$ such that $\bar K\cap F(\bar G)=1$. Then $F(\bar K)=1$ which, because of Lemma \ref{F(G)}, implies that $\bar K=1$. Hence  $\bar G$ is finite and $G$  is virtually nilpotent. This concludes the proof. 
\end{proof}

\section{Proof of Theorem 1}

 We start with the case where $G$ is a metapronilpotent group. 
\begin{lemma}\label{General case} 
Let  $G$ be a metapronilpotent group in which the centralizers of {\rm u}$_k$-commutators are either finite or open. 
Then $G$ is virtually nilpotent.
\end{lemma}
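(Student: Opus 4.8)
The plan is to reduce everything to producing a single open pronilpotent subgroup of $G$, and then let Proposition~\ref{maingamma} do the rest. The key preliminary observation is that \emph{every} pronilpotent subgroup $H\le G$ is virtually nilpotent: since $H$ is pronilpotent, its $\gamma_k$-values are u$_k$-commutators of $G$, so their centralizers in $H$ have the form $C_G(\cdot)\cap H$ and are therefore finite or open in $H$, and Proposition~\ref{maingamma} applies to $H$. In particular, once I have an open pronilpotent subgroup $P\le G$, Proposition~\ref{maingamma} upgrades it to an open nilpotent subgroup and I am done. We may assume $G$ is infinite, and write $G=NT$ with $N=\gamma_\infty(G)$ and $T$ a system normalizer; both are pronilpotent, hence virtually nilpotent by the observation.

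First I would dispose of the FC case. If $\U_k\subseteq\Delta(G)$, then Proposition~\ref{mainFC} gives the conclusion immediately. So from now on I assume there is $y\in\U_k$ whose centralizer $C_G(y)$ is finite; choose an open normal subgroup $M$ with $C_M(y)=1$, so that conjugation by $y$ is fixed-point-free on $M$. Note that $y$ has finite order and $\pi(y)$ is finite.

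The heart of the argument is to exploit this $y$ together with coprime action. For each prime $q\notin\pi(y)$, Lemma~\ref{bul-invariant} provides a $y$-invariant Sylow $q$-subgroup $Q\le M$, on which $y$ acts coprimely and fixed-point-freely. By Lemma~\ref{bul2} each $[x,y]$ with $x\in Q$ has the form $[x',{}_k y]=[y^{-x'y},{}_{k-1}y]$, which is a u$_k$-commutator because $\pi(y^{-x'y})=\pi(y)$; and since $C_Q(y)=1$ the coprime decomposition gives $Q=[Q,y]$, so $Q\subseteq\U_k$. Thus the finite-or-open dichotomy applies to every element of these subgroups $Q$. Here I would bring in Lemma~\ref{center}: an infinite pronilpotent group has infinite centre, so any element lying in an open nilpotent subgroup of such a $Q$ has infinite centralizer and is automatically an FC-element. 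The plan is to combine this with the dichotomy to show that the non-pronilpotent ``twisting'' of $G$ can only be supported on finitely many primes and on finite Sylow subgroups: if it were spread over infinitely many primes, one could assemble a single ``diagonal'' u$_k$-commutator (using the commutator-assembly Lemmas~\ref{idea} and~\ref{conj}) whose centralizer is neither finite nor open, exactly the obstruction realized by $\prod_p(C_p\rtimes C_{p-1})$, contradicting the hypothesis. Once the twisting is confined in this way, the product of the remaining Sylow subgroups is an open pronilpotent subgroup of $G$, and the preliminary observation finishes the proof.

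The hard part will be precisely this last step. Because the hypothesis yields only the finite-or-open \emph{dichotomy} for u$_k$-commutators, and not a uniform finite-centralizer condition, Proposition~\ref{mainFinite} cannot be invoked directly; indeed infinitely many FC u$_k$-commutators coexist with the finite-centralizer element $y$ (for instance inside the centre of an open nilpotent subgroup of $N$, all of whose elements are u$_k$-commutators by the coprime argument above). The real content is therefore to convert ``some u$_k$-commutator has finite centralizer'' into genuine finite support for the action of $T$ on $N$, ruling out the spread-out configuration. The coprime-action Lemmas~\ref{bul-invariant} and~\ref{bul2}, the infinite-centre Lemma~\ref{center}, and the assembly Lemmas~\ref{idea} and~\ref{conj} are the tools I expect to need for this bookkeeping.
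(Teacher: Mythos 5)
Your proposal has a genuine gap at exactly the point you yourself flag as ``the hard part'': the case where some $y\in\U_k$ has finite centralizer is never actually handled. Everything before that is correct --- the reduction of pronilpotent subgroups via Proposition~\ref{maingamma}, the disposal of the case $\U_k\subseteq\Delta(G)$ by Proposition~\ref{mainFC}, and the observation that the fixed-point-free coprime action of $y$ on a $y$-invariant Sylow $q$-subgroup $Q$ of $M$ forces $Q=[Q,y]\subseteq\U_k$ --- but the core claim, that the ``non-pronilpotent twisting'' must be confined to finitely many primes and finite Sylow subgroups, is only a plan. You do not construct the ``diagonal'' $\mathrm{u}_k$-commutator, nor verify that its centralizer would be neither finite nor open (this is delicate: a product of commutators ranging over infinitely many primes need not itself be a $\mathrm{u}_k$-commutator unless the entries can be chosen to commute and to have equal prime spectra), nor explain how confinement of the twisting produces an open pronilpotent subgroup. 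As written, this is a proof strategy with the decisive step missing, not a proof.

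The paper's own argument shows that the entire confinement analysis can be avoided. It writes $G=NT$ with $N=\gamma_\infty(G)$ pronilpotent and $T$ a system normalizer, takes nilpotent open subgroups $N_0\le N$ (chosen characteristic in $G$) and $T_0\le T$, and sets $L=N_0T_0$, an open soluble subgroup of $G$; it then argues by induction on the derived length of $L$, which reduces to the case where $L'$ is nilpotent. If $L'$ is finite, $L$ is virtually abelian. If $L'$ is infinite, then $Z(L')$ is infinite by Lemma~\ref{center}; since every element of $\U_k(L)$ lies in $\gamma_k(L)\le L'$ (here $k\ge2$), it centralizes $Z(L')$, so its centralizer is infinite, hence open by the dichotomy. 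Thus $\U_k(L)\subseteq\Delta(L)$ and Proposition~\ref{mainFC} finishes. The trick you are missing is to apply Lemma~\ref{center} not to Sylow subgroups but to the nilpotent commutator subgroup $L'$, which contains all $\mathrm{u}_k$-commutators of $L$: this forces your ``FC case'' to hold automatically on an open subgroup, so an element of $\U_k$ with finite centralizer never needs to be confronted at all.
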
 
\begin{proof}
Write $G =N T$, where $T$ is a system normalizer and $N= \gamma_\infty (G)$. Since $G$ is metapronilpotent, it follows that $N$ and $T$ are pronilpotent. Hence, by Proposition \ref{maingamma} both $N$ and $T$ are  virtually nilpotent. Let  $N_0\leq N$ and  $T_0\leq T$ be nilpotent open subgroups of $N$ and $T$ respectively. Remark that  $N_0$ can be chosen characteristic in $G$. 
Notice that $L=N_0T_0$ is open in $G$ and it is sufficient to show that $L$ is virtually nilpotent. Obviously, $L$ is soluble. Arguing by induction on the derived length of $L$ we can assume that $L'$ is virtually nilpotent. Then $L$ has an open subgroup whose commutator subgroup is nilpotent and so without loss of generality we can assume that $L'$ is nilpotent.

 If $L'$ is finite, then $L$ is  virtually abelian. Assume that $L'$ is  infinite. Lemma \ref{center} says that $Z(L')$ is infinite, too.  Observe that every element of $\U_k(L)$ centralizes $Z(L')$. Hence, every element of $\U_k(L)$ is an $FC$-element, and therefore $L$ is virtually nilpotent by Proposition \ref{mainFC}.  
 \end{proof} 
Recall that a group is locally finite if every finite subset is contained in a finite subgroup.
\begin{lemma}\label{centralizer} Let $G$ be a profinite group and $H$ a locally finite normal abstract subgroup of $G$. Let $\hat H$ be the closure of $H$. Suppose that
$C_H(a) = 1$ for some torsion element $a \in G.$ Then $C_{G/ \hat H} (a)$ is the image of $C_G(a)$ in $G/ \hat H.$
\end{lemma}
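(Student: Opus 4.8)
The plan is to prove that the only nontrivial inclusion, $C_{G/\hat H}(\bar a)\subseteq C_G(a)\hat H/\hat H$ (where $\bar g$ denotes the image of $g$ in $G/\hat H$), reduces to a surjectivity statement for a continuous self-map of $\hat H$. The inclusion $C_G(a)\hat H/\hat H\subseteq C_{G/\hat H}(\bar a)$ is immediate, since $g\in C_G(a)$ forces $[\bar g,\bar a]=\bar 1$. For the converse, suppose $\bar g\in C_{G/\hat H}(\bar a)$, that is $u:=[g,a]\in\hat H$. A direct computation gives $[gh,a]=h^{-1}u\,h^{a}$ for every $h$, so $gh\in C_G(a)$ precisely when $u=h\,(h^{a})^{-1}$. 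Hence it suffices to show that
$$\psi\colon \hat H\to \hat H,\qquad \psi(h)=h\,(h^{a})^{-1},$$
is surjective: choosing $h\in\hat H$ with $\psi(h)=u$ yields $gh\in C_G(a)$ and therefore $\bar g\in C_G(a)\hat H/\hat H$. Note that $\psi$ is well defined and continuous, for $H$ is normal in $G$, so conjugation by $a$ is a homeomorphism of $\hat H$ onto itself, and $\psi$ is a composition of continuous maps.

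Next I would analyse $\psi$ on the abstract subgroup $H$. Since $a$ is a torsion element, $\langle a\rangle$ is finite, and as $H$ is locally finite and normal every finite subset of $H$ lies in a finite $\langle a\rangle$-invariant subgroup: given a finite set $S\subseteq H$, enlarge it to $S\cup S^{a}\cup\dots\cup S^{a^{\,|a|-1}}$, which is a finite subset of $H$, and take the (finite) subgroup it generates. Thus $H$ is a directed union of finite $a$-invariant subgroups $H_i$, and on each $H_i$ the automorphism induced by $a$ is fixed-point-free, because $C_{H_i}(a)\le C_H(a)=1$. The standard fixed-point-free argument shows $\psi$ is injective on $H_i$: if $\psi(h)=\psi(h')$, then $(h')^{-1}h=\big((h')^{-1}h\big)^{a}$ is fixed by $a$, hence trivial. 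Being an injective self-map of the finite set $H_i$, $\psi$ restricts to a bijection of $H_i$, whence $\psi(H)=\bigcup_i\psi(H_i)=\bigcup_i H_i=H$.

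Finally I would upgrade this to surjectivity on $\hat H$ by compactness: since $\psi$ is continuous and $\hat H$ is compact, $\psi(\hat H)$ is compact and hence closed; as it contains $\psi(H)=H$, which is dense in $\hat H$, we conclude $\psi(\hat H)=\hat H$. This completes the argument. The one point requiring care, and the main potential pitfall, is that fixed-point-freeness is assumed only for the abstract subgroup $H$ and need not hold for its closure $\hat H$. The argument circumvents this by establishing bijectivity purely on the finite pieces $H_i$ of $H$ and then deducing surjectivity on $\hat H$ through density and compactness, rather than through any injectivity statement about $\hat H$ itself.
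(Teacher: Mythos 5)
Your proof is correct and follows essentially the same route as the paper: both reduce the nontrivial inclusion to surjectivity of the commutator-type map on $\hat H$, prove bijectivity on finite $a$-invariant subgroups of $H$ via fixed-point-freeness, and pass to $\hat H$ by continuity. The only cosmetic differences are that you work with $\psi(h)=h(h^{a})^{-1}$ rather than $x\mapsto[x,a]$ and that you spell out the density--compactness step which the paper leaves implicit.
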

\begin{proof}
Let $h \in H$. Then $J=\langle h \rangle^{\langle a \rangle}$ is a finite $a$-invariant subgroup of $H$. Consider the map $f : J \mapsto J$ defined by: $x\mapsto [x,a]$. Since $C_H(a) = 1$, the map is injective and hence surjective.  So $h = [x,a]$ for some $x \in J$. This holds for every $h\in H$ and therefore the natural extension of $f$ to 
 $\hat H$ is also surjective.

Clearly,  $C_{G/ \hat H} (a)$ contains the image of $C_G(a)$ in $G/\hat H$. Conversely, if $x\hat H \in  C_{G/ \hat H} (a)$, then $[x,a] \in \hat H$, whence
$[x,a]=[h,a]$ for some $h \in \hat H$. We deduce that $x h^{-1} \in C_G(a)$ and $x \in C_G(a)\hat H$. Thus $C_{G/ \hat H} (a)=C_G (a)\hat H /\hat H $, as required.
\end{proof}

Now we are ready to prove the  result on virtually nilpotency.

\begin{proposition}\label{mainfirst} Let  $G$ be a profinite group in which the centralizers of uniform $k$-step commutators  are either finite or open. Then $G$ is virtually nilpotent.
\end{proposition}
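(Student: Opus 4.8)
The plan is to reduce, in stages, to the cases already settled in Propositions \ref{maingamma}, \ref{mainFC} and \ref{mainFinite}, and above all to the metapronilpotent case of Lemma \ref{General case}, which will serve as the base of an induction on the Fitting height. First I would establish that $G$ is virtually prosoluble, reproducing the argument of Lemma \ref{prosol} under the present weaker hypothesis. For a Sylow $p$-subgroup $P$, every $\gamma_k$-value of $P$ is a u$_k$-commutator of $G$ lying in $P$, and $C_P(w)=C_G(w)\cap P$ is finite or open in $P$ according as $C_G(w)$ is finite or open in $G$; hence $P$ satisfies the hypothesis of Proposition \ref{maingamma} and is virtually nilpotent, so soluble of bounded derived length. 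Taking $p=2$ and invoking \cite[Theorem 1.4]{KS2}, $G$ has finite nonprosoluble length, and since its simple sections are finite by Lemma \ref{simple} (whose proof uses only that the centralizer in question is neither finite nor open when the index set is infinite), $G$ has an open prosoluble subgroup. As virtual nilpotency passes between $G$ and its open subgroups, I may assume $G$ prosoluble and argue by induction on the Fitting height $h(G)$, the case $h(G)\le 2$ being exactly Lemma \ref{General case}.

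Next I would run the dichotomy that drives the whole argument. Since every u$_k$-commutator lies in $\gamma_k(G)$, it centralizes $Z(\gamma_k(G))$; hence if $Z(\gamma_k(G))$ is infinite, then every element of $\U_k$ has infinite, and therefore open, centralizer, so $\U_k\subseteq\Delta(G)$ and Proposition \ref{mainFC} finishes the proof. If instead $\gamma_k(G)$ is finite, then $C=C_G(\gamma_k(G))$ is open and satisfies $\gamma_{k+1}(C)=1$, so $G$ is virtually nilpotent. There remains the hard case in which $\gamma_k(G)$ is infinite, $Z(\gamma_k(G))$ is finite (so $\gamma_k(G)$ is not pronilpotent, by Lemma \ref{center}), and some fixed $y\in\U_k$ has finite centralizer.

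In this hard case I would exploit the torsion element $y$ through coprime action. Writing $F=F(G)$, which is infinite (otherwise $C_G(F)\le F$ forces $G$ finite) and $\pi=\pi(y)$, the element $y$ acts coprimely on the pronilpotent normal subgroup $O_{\pi'}(F)$ with finite fixed-point subgroup, and Lemma \ref{bul2} shows that each commutator $[a,y]$ with $a\in O_{\pi'}(F)$ is again a u$_k$-commutator; thus $y$ manufactures a large supply of u$_k$-commutators inside $O_{\pi'}(F)$. The goal is to extract from this action a nontrivial $G$-invariant locally finite subgroup $H$ on which $y$ acts fixed-point-freely, so that Lemma \ref{centralizer} applies and the finite-or-open centralizer condition descends to $G/\hat H$. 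Applying the induction hypothesis to the resulting quotient, together with the fact that $\hat H$ is pronilpotent, should then produce an open nilpotent subgroup of $G$.

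The hard part will be precisely this last extraction and descent. Coprimeness only yields that $C_{O_{\pi'}(F)}(y)$ is finite, whereas Lemma \ref{centralizer} demands genuine fixed-point-freeness $C_H(y)=1$ together with normality of $H$ in all of $G$ and local finiteness; reconciling these three requirements, for instance by descending to the torsion part of $[Z(O_{\pi'}(F)),y]$ and then controlling its normal closure while keeping it locally finite, is the technical crux, and it is here that the prime bookkeeping between $\pi(y)$ and its complement, via the subgroups $O_\pi$, $O_{\pi'}$ and Hall subgroups of the prosoluble $G$, must be handled with care. A second delicate point is that virtual nilpotency is \emph{not} closed under extensions, so the induction cannot be a naive descent on Fitting height alone: the quotient produced by Lemma \ref{centralizer} must be arranged so that the open nilpotent subgroup it provides pulls back to an open nilpotent subgroup of $G$, which is why the kernel $\hat H$ must be kept pronilpotent and, at each stage, absorbed into the Fitting subgroup.
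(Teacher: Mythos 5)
Your proposal correctly identifies the outer architecture (reduce to an open prosoluble subgroup, induct on Fitting height with Lemma \ref{General case} as base, use Lemma \ref{centralizer} and coprime action in the hard case), and your reduction to prosolubility via the argument of Lemma \ref{prosol} is a valid adaptation. But the two points you flag as ``the technical crux'' and ``a second delicate point'' are not loose ends: they are the entire content of the paper's proof, and the routes you sketch toward them fail. First, your induction on $h(G)$ cannot even start, because prosolubility does not give finite Fitting height: there exist prosoluble groups with $F(G)=1$, and your justification that $F(G)$ is infinite (``otherwise $C_G(F)\le F$ forces $G$ finite'') invokes self-centralization of the Fitting subgroup, a fact about \emph{finite} soluble groups that fails in the prosoluble setting -- this is precisely why the paper needs the FC hypothesis to prove Lemma \ref{F(G)}, which is unavailable in your hard case. (The same over-extension occurs in your parenthetical use of Lemma \ref{center}: it concerns nilpotent groups, and infinite pronilpotent groups can have trivial centre.) In the paper, finiteness of the Fitting height of an open subgroup is the \emph{output} of a substantial argument: an element $a\in\U_k$ with finite centralizer acts fixed-point-freely on a locally finite subgroup $K$, so $\langle a\rangle$ is a Carter subgroup of every finite subgroup of $K\langle a\rangle$ containing $a$, and Dade's theorem bounds the Fitting height of all such subgroups in terms of $|a|$. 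Nothing in your proposal plays this role.

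Second, the extraction step is missing and your suggested route would break down: inside $O_{\pi'}(F(G))$ coprimality yields only \emph{finiteness} of fixed points, and the ``torsion part of $[Z(O_{\pi'}(F)),y]$'' may well be trivial (e.g.\ when $O_{\pi'}(F)$ is torsion-free procyclic), leaving nothing to mod out. The paper's construction is different in kind and bypasses coprimality entirely: one first disposes of the case where some $x\in\U_k$ has infinite order (then $C_G(x)$ is open, every u$_k$-commutator in it is an $FC$-element, and Proposition \ref{mainFC} applies), so that all u$_k$-commutators are torsion; then $H$ is taken to be the abstract subgroup generated by the set $Y$ of u$_k$-commutators with \emph{open} centralizers. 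This $H$ consists of torsion $FC$-elements, hence is locally finite, normal and residually finite, and for any $a\in\U_k\setminus Y$ one gets genuine fixed-point-freeness for free by choosing a finite-index $a$-invariant $K\le H$ avoiding the finite set $C_H(a)$. This choice of $H$ also repairs your final step, which is otherwise unjustified: the finite-or-open hypothesis does not pass to quotients and $h(G/\hat H)$ need not drop, so ``applying the induction hypothesis to $G/\hat H$'' is not available. With the paper's $H$, passing to $G/\hat H$ kills exactly the u$_k$-commutators with open centralizers, so every nontrivial u$_k$-commutator of the quotient has finite centralizer and the already-established Proposition \ref{mainFinite} finishes that quotient with no induction at all; the Fitting-height induction with Lemma \ref{General case} is then needed only to pull the conclusion back to $G$.
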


\begin{proof}
Suppose that an element $x \in \U_k$ has infinite order. Then the centralizer $C=C_G(x)$ is infinite and therefore open. Every {\rm u}$_k$-commutator lying in $C$ is centralized by $x$, which has infinite order. Therefore every {\rm u}$_k$-commutator in $C$ is an $FC$-element. Apply Proposition \ref{mainFC} to deduce that $C$ is  virtually nilpotent, and the same holds for $G$,  as $C$ is open in $G$.  Hence, without loss of generality, we will assume that every element of $\U_k$ has finite order.
 
Let $Y\subseteq \U_k$ be the set of elements of $\U_k$ that have open centralizers, and let $H$ be the abstract subgroup generated by $Y$. Note that $H$ is contained in the $FC$-centre  of $G$. In particular, $H$ is locally finite. If $Y=\U_k$, the result is immediate from Proposition \ref{mainFC} so assume that $Y\neq \U_k$.

 Let $a \in \U_k \setminus Y$. As the centralizer $C_G(a)$ is finite and $H$ is residually finite,  there exists a normal subgroup $K$ of finite index in $H$ such that $C_K(a)=1$. Let $\hat H$ and $\hat K$ be the topological closures of $H$ and $K$, respectively. It follows from Lemma \ref{centralizer} that $C_{G/ \hat K} (a)=C_G (a)\hat K /\hat K $ and therefore $C_{G/ \hat K} (a)$ is finite.  Since $\hat H/\hat K$ is finite, also $C_{G/ \hat H} (a)$ is finite (see e.g. \cite[Lemma 2.1]{shalev}).
 Thus every nontrivial element of $\U_k$ has finite centralizer in $G/ \hat H$. In view of Proposition \ref{mainFinite} we conclude that $G/ \hat H$ is virtually nilpotent.

 Let us now examine the action of $a$ on $H$. Again, $K$ is a finite index $a$-invariant subgroup of $H$ such that $C_K(a)=1$. A well-known corollary of the classification of finite simple groups is that a finite group admitting a fixed-point-free automorphism is soluble (see e.g.  \cite{rowley}). Thus $K$ is locally soluble. 
  Recall that  a Carter subgroup is a self-normalizing nilpotent subgroup and note 
 that $\langle a \rangle$ is a Carter subgroup in every finite subgroup $T$ of $K\langle a \rangle$  such that $a\in T$. 
 The main result in Dade's paper \cite{Dade}  implies that the Fitting height $h(T)$ of $T$ is bounded by a function depending only on the order of $a$. 
  We deduce that $K$ has a characteristic series of finite length all of whose factors are locally nilpotent. Therefore $\hat K$ has a  finite characteristic series with pronilpotent factors, that is, $\hat K$ has finite Fitting height. As $\hat K$ is open in $\hat H$ and $G/ \hat H$ is virtually nilpotent, we conclude that also $G$ has an open prosoluble subgroup whose Fitting height is finite. So, without loss of generality we can assume that $G$ is prosoluble and $h(G)$ is finite. 
 
If $h(G) \le 2$, Lemma \ref{General case} implies that $G$ is virtually nilpotent. Assume that $h(G) > 2$ and argue by induction of $h(G)$. It follows that $G/F(G)$ is virtually nilpotent. Hence, $G$ has an open subgroup $M$ such that $h(M)\leq2$. In view of Lemma \ref{General case} $M$ (and therefore $G$) is virtually nilpotent. The proof is now complete.
 \end{proof}

Now Theorem \ref{main2}  follows. 

\begin{proof}[Proof of Theorem \ref{main2}] Let  $G$ be a profinite group in which the centralizers of elements of $\U_2$ are either finite or open. It follows from Proposition \ref{mainfirst}  that  $G$ is virtually nilpotent.
  So   we can assume that $G$ is nilpotent. In that case every  commutator in $G$ is a uniform commutator. In view of Proposition \ref{maingamma} we conclude that $G$ has an open subgroup which is nilpotent of class at most $3$. 
  \end{proof}

It remains to prove the part of Theorem \ref{main} which states that $\gamma_k(G)$ is virtually abelian.

Let G be a group and $w=w(x_1,\dots,x_n)$   a word. The marginal subgroup $w^*(G)$ of
$G$ corresponding to the word $w$ is defined as the set of all $x \in G$  such that
 $$w(g_1,\dots, x g_i,\dots,g_n)= w(g_1,\dots,  g_i x ,\dots,g_n)=w(g_1,\dots,g_i,\dots,g_n)$$
 for all  $g_1,\dots,g_n \in G$ and $1 \le i \le n$. 
 It is well known that $w^*(G)$ is a characteristic subgroup of $G$ and  $[w^*(G), w(G)]=1$. 
   Note that marginal subgroups in profinite groups are closed. 

Let  $S$ be a subset of a group $G$. Following \cite{DMSrestricted} 
 define the  $w^*$-residual of  $S$ of $G$ to be the intersection  of all normal subgroups $N$ such that $SN/N$ is contained in the marginal subgroup  $w^*(G/N)$.

 For  multilinear commutator words the  $w^*$-residual of  a normal subgroup has the following properties.  

\begin{lemma}\label{ts}\cite[Lemma 4.1]{DMSrestricted} 
Let $w$ be a multilinear 
commutator word, $G$  a group and $N$ a normal subgroup of $G$. 
 Then the   $w^*$-residual of $N$ in $G$ is the subgroup generated by the elements $w(g_1, \dots, g_n)$ where at least one of $g_1, \dots, g_n$ belongs to $N$. \end{lemma}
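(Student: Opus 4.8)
The plan is to show that the $w^*$-residual of $N$ coincides with the subgroup
$$D=\langle\, w(g_1,\dots,g_n)\mid g_1,\dots,g_n\in G\text{ and }g_i\in N\text{ for some }i\,\rangle .$$
First I would record that $D$ is a (closed) normal subgroup: conjugation by $h\in G$ carries $w(g_1,\dots,g_n)$ to $w(g_1^h,\dots,g_n^h)$, and since $N$ is normal an entry in $N$ is sent to an entry in $N$, so conjugation permutes the generators of $D$. Next I would reformulate the defining condition of the residual. For a normal subgroup $M$ the requirement that $NM/M$ lie in $w^*(G/M)$ is equivalent to the membership in $M$ of every \emph{marginal defect}
$$w(g_1,\dots,ng_i,\dots,g_n)\,w(g_1,\dots,g_i,\dots,g_n)^{-1},\qquad w(g_1,\dots,g_in,\dots,g_n)\,w(g_1,\dots,g_i,\dots,g_n)^{-1},$$
as $g_1,\dots,g_n$ range over $G$, $n$ over $N$ and $i$ over all positions. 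Writing $\Sigma$ for the set of all marginal defects (a subset of $G$ depending only on $G,N,w$), the residual is therefore exactly the smallest normal subgroup containing $\Sigma$.

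Next I would prove the inclusion $D\subseteq R$, where $R$ denotes the $w^*$-residual. Let $M$ be any normal subgroup with $\Sigma\subseteq M$ and let $w(g_1,\dots,g_n)$ be a generator of $D$, say with $g_i\in N$. Then $g_iM$ lies in $NM/M\subseteq w^*(G/M)$, so inserting this marginal coset into the $i$-th slot shows $w(g_1,\dots,g_i,\dots,g_n)\equiv w(g_1,\dots,1,\dots,g_n)\pmod M$; since a multilinear commutator word with a trivial entry evaluates to $1$ (a one-line induction on the structure of $w$), this gives $w(g_1,\dots,g_n)\in M$. As this holds for every such $M$, the generator lies in $R=\bigcap M$, and hence $D\subseteq R$.

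For the reverse inclusion it suffices to show that $\Sigma\subseteq D$, i.e. that every marginal defect already lies in $D$; since $D$ is normal this yields $R\subseteq D$. I would establish $\Sigma\subseteq D$ by induction on the structure of $w$, writing $w=[u,v]$ with $u,v$ multilinear commutators in disjoint sets of variables and treating symmetrically the cases where the altered variable lies in $u$ or in $v$. Peeling the factor $n\in N$ off the altered argument by repeated use of the identities $[ab,c]=[a,c]^b[b,c]$ and $[a,bc]=[a,c][a,b]^c$, one rewrites $w(\dots,ng_i,\dots)$ as a product of $w(\dots,g_i,\dots)$, possibly conjugated, with error terms, each of which is a conjugate of a commutator $[\,U,V\,]$ in which $U$ is a $u$-value and $V$ a $v$-value and one of them carries an entry in $N$. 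The two structural facts that make the induction close are that such a commutator is itself a $w$-value with an entry in $N$, hence a generator of $D$, and that the residual conjugation is harmless, since the conjugating element is built out of $n$ and one checks, by the same calculus, that bracketing $w(\dots,g_i,\dots)$ with it again produces a $w$-value with an $N$-entry. Normality of $D$ then absorbs all the conjugations.

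The main obstacle is precisely this last step: the commutator bookkeeping. The difficulty is not any single identity but keeping track of the conjugating elements generated by iterating $[ab,c]=[a,c]^b[b,c]$, and verifying that every error term produced — including the correction coming from the conjugation of the leading term — is a conjugate of a $w$-value with an entry in $N$, so that it genuinely falls inside the normal subgroup $D$. In the profinite setting one takes $D$ to be the corresponding closed subgroup and uses that marginal subgroups are closed and that all the identities hold pointwise, so the result transfers by continuity and the usual inverse-limit argument.
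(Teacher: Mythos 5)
Your overall architecture is the standard one and most of it is sound: the reformulation of the $w^*$-residual $R$ as the smallest normal subgroup containing the set $\Sigma$ of marginal defects (with multipliers from $N$) is correct, the inclusion $D\subseteq R$ via marginality of $g_iM$ in $G/M$ and the vanishing of multilinear commutators with a trivial entry is complete, and the reduction of $R\subseteq D$ to showing $\Sigma\subseteq D$ (using normality of $D$) is also correct. The gap is exactly where you flag it, and the specific claim you make to close it is false. When you expand $w(\dots,ng_i,\dots)=[a',b]$ with $a'=ac$, $c\in D_u(N)$, via $[ac,b]=[a,b]^c[c,b]$, the leading term gets conjugated, and the correction is $[[a,b],c]$ with $[a,b]$ a $w$-value and $c\in N$. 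You assert this correction is again (a conjugate of) a $w$-value with an $N$-entry. That is true for $\gamma_k$ (there $[[a,b],c]$ re-parses as a left-normed value ending in $c$), but it fails for general multilinear commutator words: take $w=[u,v]$ with $u=[[x_1,x_2],[x_3,x_4]]$ and $v=x_5$. A $w$-value has the shape $[[Y_{12},Y_{34}],y_5]$ with $Y_{12},Y_{34}$ commutators, so writing $[[a,b],c]$ in this shape forces one to express $g_5$ (an arbitrary group element, e.g.\ a free generator) as a commutator. Within the induction you cannot instead argue that $[w\text{-value},N]\subseteq D$ abstractly, because that statement is itself a consequence of the marginality you are trying to prove --- the argument becomes circular.

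The fix is concrete and stays entirely inside your framework: arrange the commutator expansions so that the leading term is never conjugated. By the inductive hypothesis (and normality of $D_u(N)$, $D_v(N)$) you may write the modified $u$-value as $a'=c'a$ with $c'\in D_u(N)$ and the modified $v$-value as $b'=db$ with $d\in D_v(N)$; then use $[c'a,b]=[c',b]^a\,[a,b]$ and $[a,db]=[a,b]\,[a,d]^b$. Now the defect is exactly $[c',b]^a$, respectively a conjugate of $[a,d]$, and no correction term involving $[a,b]$ appears. Expanding $c'$ (resp.\ $d$) as a product of conjugates of $u$-values (resp.\ $v$-values) with an $N$-entry, and using $[U^x,b]=[U,b^{x^{-1}}]^x$ together with the facts that conjugates of $u$-values are $u$-values, conjugates of $v$-values are $v$-values, and the generating set of $D$ is conjugation-closed, every factor is a conjugate of some $[U,V]$ with $U$ a $u$-value, $V$ a $v$-value, and one of them carrying an entry in $N$ --- i.e.\ a genuine $w$-value with an $N$-entry. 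Hence the defect lies in $D$, the induction closes, and the rest of your argument goes through unchanged.
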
 
 
 \begin{lemma}\label{concise}\cite[Lemma 4.2]{DMSrestricted}
 Let $w$ be a multilinear commutator word, $G$  a profinite group and  $N$  an open normal subgroup of $G$. Then the $w^*$-residual of $N$ is open in $w(G)$. 
\end{lemma}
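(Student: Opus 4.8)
The plan is to pass to the quotient by the $w^*$-residual and reduce the statement to the conciseness of $w$. Write $R$ for the $w^*$-residual of $N$ in $G$. By Lemma \ref{ts}, $R$ is the (closed) subgroup generated by those $w$-values $w(g_1,\dots,g_n)$ having at least one entry in $N$; in particular $R\le w(G)$, and $R$ is normal in $G$ because conjugation distributes over a multilinear commutator word and $N$ is normal. Set $\bar G=G/R$ and $\bar N=NR/R$. First I would check that $\bar N\le w^*(\bar G)$: for $x\in N$ and arbitrary $g_1,\dots,g_n\in G$, expanding $w(g_1,\dots,xg_i,\dots,g_n)$ by the usual multilinear commutator identities shows that it differs from $w(g_1,\dots,g_i,\dots,g_n)$ by a product of conjugates of $w$-values each having an entry in $N$, hence by an element of the normal subgroup $R$; the same holds with $g_ix$ in place of $xg_i$. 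This is precisely the assertion that $\bar N$ is marginal in $\bar G$, and it is the content underlying Lemma \ref{ts}. Since $R\le w(G)$ we have $w(\bar G)=w(G)R/R\cong w(G)/R$, and $R$ is closed (being an intersection of closed normal subgroups). Thus the claim ``$R$ is open in $w(G)$'' is equivalent to ``$w(\bar G)$ is finite'', and this is what I would aim to prove.

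Next I would exploit that $N$ is open. Then $\bar N$ has finite index in $\bar G$, so the larger subgroup $w^*(\bar G)$ is open; write $r=[\bar G:w^*(\bar G)]<\infty$. By the defining property of the marginal subgroup, the value $w(a_1,\dots,a_n)$ is unchanged when any single argument $a_i$ is multiplied on either side by an element of $w^*(\bar G)$; since $w^*(\bar G)$ is normal, this means $w$ is constant on each coset of $w^*(\bar G)$ in every one of its $n$ variables. Consequently the set of $w$-values in $\bar G$ has at most $r^n$ elements, i.e. $w$ takes only finitely many values in $\bar G$.

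Finally I would invoke the (bounded) conciseness of multilinear commutator words: in a finite group the order of $w(H)$ is bounded by a function of the number of $w$-values alone. Applying this in every continuous finite quotient of $\bar G$ — in each of which the number of $w$-values is at most $r^n$ — gives a uniform bound on $|w(\bar G/M)|$ over all open normal $M$, and passing to the inverse limit yields that $w(\bar G)$ is finite. This forces $R$ to be open in $w(G)$, completing the argument. The main obstacle is exactly this last step. The softer information available from marginality, namely $[w^*(\bar G),w(\bar G)]=1$ together with $w^*(\bar G)$ open, only shows that $\bar N\cap w(\bar G)$ is a central subgroup of finite index, so that $w(\bar G)$ is centre-by-finite and $w(\bar G)'$ is finite by Schur's theorem; this is not enough to conclude finiteness of $w(\bar G)$ itself, as a torsion-free procyclic abelianization would survive. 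It is the conciseness of $w$, rather than any centralizer estimate, that closes this gap, and the delicate point is to make the bound uniform across all finite quotients so that the inverse-limit step goes through.
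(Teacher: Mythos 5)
Your proof is correct and follows essentially the same route as the source of this lemma (the present paper gives no proof, quoting it from \cite[Lemma 4.2]{DMSrestricted}): modulo the $w^*$-residual $R$ the image of $N$ is marginal (Lemma \ref{ts} together with the commutator expansion you sketch), hence $w^*(G/R)$ has finite index since $N$ is open, marginality bounds the number of $w$-values in $G/R$ by $[G/R:w^*(G/R)]^n$, and bounded conciseness of multilinear commutator words \cite{fernandez-morigi} then makes $w(G/R)=w(G)/R$ finite, i.e.\ $R$ is open in $w(G)$. The only simplification worth noting is that your detour through all finite continuous quotients and the inverse limit is unnecessary: once $G/R$ has finitely many $w$-values, conciseness applied to $G/R$ as an abstract group already gives a finite abstract verbal subgroup, which is therefore closed and equal to $w(G/R)$.
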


The following result is a particular case of  \cite[Proposition 4.5]{DMSrestricted}: 
\begin{proposition}\label{X} 
 Let $G$ be a profinite group and $N$ a normal subgroup of $G$. Let $H$ be the topological closure of $\Delta(G)$ in $G$. 
  Fix $i\in \{1,\dots,k\}$ and  consider the set 
 $X_i=\{[g_1,\dots,g_k] \mid g_i\in N,g_j\in G \}$. 
 If  
 \[ X_i \subseteq \Delta(G), \]
then $[H , \langle X_i\rangle ]$ is finite.
\end{proposition}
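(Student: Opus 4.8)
The plan is to reduce the statement to a uniform bound on conjugacy-class sizes, and then to a finiteness statement about bounded $FC$-subsets of the $FC$-centre. Since $N$ is a (closed) normal subgroup, the set $X_i$ is invariant under conjugation by $G$, so $W:=\langle X_i\rangle$ is a normal subgroup contained in $\Delta(G)$, and hence $W\le H$. Because $W$ is normal in $G$ we have $[H,W]\le W$ (indeed $[h,w]=w^{-1}w^h\in W$), so the whole argument takes place inside $\Delta(G)$, and $[H,W]$ is topologically generated by the commutators $[h,x]$ with $h\in\Delta(G)$ and $x\in X_i$.

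First I would produce a uniform bound, exactly as in the proof of Proposition \ref{profinite-FC}. Writing $\Delta(G)=\bigcup_j\Delta_j$ with each $\Delta_j=\{x\mid |x^G|\le j\}$ closed, the continuous word map on the profinite space $G^{i-1}\times N\times G^{k-i}$ sends this space into $\bigcup_j C_j$, where $C_j$ is the closed preimage of $\Delta_j$. By the Baire category theorem some $C_m$ has nonempty interior, yielding elements $y_1,\dots,y_k$ (with $y_i\in N$) and an open normal subgroup $T$ with $[y_1T,\dots,y_kT]\subseteq\Delta_m$, the $i$-th slot being understood to range over $y_i(T\cap N)$. Lemma \ref{comb1} then promotes this local bound to a global one, so that after absorbing constants there is an $m$ with $X_i\subseteq\Delta_m(G)$; that is, $W$ is a bounded $FC$-subgroup of $G$ (conjugacy classes of size at most $m$). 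The first technical point to watch is the asymmetry of $X_i$: its $i$-th entry ranges only over $N$, not over all of $G$, so Lemma \ref{comb1} must be invoked using the normality of $N$ and the slot $T\cap N$ in place of $T$.

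With the uniform bound in hand, $V:=[H,W]\le W\subseteq\Delta_m(G)$ is itself a bounded $FC$-group, and each generator $[h,x]$ lies in the derived subgroup of the finitely generated $FC$-group $\langle h,x\rangle\le\Delta(G)$, which is finite by B.~H.~Neumann's theorem on finitely generated $FC$-groups; hence every generator of $V$ is a torsion element of uniformly bounded conjugacy. \textbf{The main obstacle is the final finiteness step.} Neumann's theorem gives that $V'$ is finite, so it remains to control the abelian quotient $V/V'$, generated by the torsion images of the bounded-conjugacy commutators. Here the conjugacy bound alone does \emph{not} suffice — an infinite extraspecial-type group is a bounded $FC$-group of finite exponent that is infinite — so one must use that $V$ is generated by commutators one of whose entries lies in the closure of the $FC$-centre. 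The way I would finish is to show, combining the uniform bound with this commutator structure, that the generators $w^{-1}w^{h}$ fall into finitely many $G$-conjugacy classes of bounded order; then $V$ is generated by a finite normal set of torsion elements and is finite by Dietzmann's lemma. This last reduction is exactly the delicate part carried out in \cite[Proposition 4.5]{DMSrestricted}, and it is where the interplay between the uniform bound and the structure of $\Delta(G)$ is indispensable.
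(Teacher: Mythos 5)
You should first note that the paper does not actually prove this statement: Proposition \ref{X} is quoted as a particular case of \cite[Proposition 4.5]{DMSrestricted}, so the paper's ``proof'' is the citation itself, and your proposal has to stand on its own. It does not, and the decisive problem is your last step: the reduction showing that the generators $w^{-1}w^{h}$ of $[H,\langle X_i\rangle]$ fall into finitely many conjugacy classes of bounded order is not carried out at all --- you explicitly defer it to ``the delicate part carried out in \cite[Proposition 4.5]{DMSrestricted}'', which is precisely the statement being proved. Even if everything before that point were correct, the proposal would reduce the proposition to itself (or to the external reference the paper already cites) rather than prove it.

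There is also a concrete mathematical gap earlier. You claim that the Baire category argument plus Lemma \ref{comb1} yield, ``after absorbing constants'', a single $m$ with $X_i\subseteq\Delta_m(G)$. Lemma \ref{comb1} does no such thing: it bounds the conjugacy class sizes only of $w$-values all of whose entries lie in the open subgroup $T$ (in your variant, with the $i$-th entry in $T\cap N$); it gives no control over commutators whose entries range over arbitrary cosets of $T$ in $G$, and membership of $X_i$ in $\Delta(G)$ only guarantees finite classes, not classes of bounded size. Everything downstream rests on this uniform bound, so the argument collapses there; passing from control of the values in $T$ to control of $[H,\langle X_i\rangle]$ is exactly the nontrivial content of \cite[Proposition 4.5]{DMSrestricted}. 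Two further slips: even granting $X_i\subseteq\Delta_m(G)$, the subgroup $W=\langle X_i\rangle$ need not be contained in $\Delta_m(G)$ (class sizes of products are finite but not $m$-bounded), so ``$V\le W\subseteq\Delta_m(G)$ is a bounded $FC$-group'' is unjustified; and under the paper's conventions $\langle X_i\rangle$ is closed, so it lies in $H$ but not necessarily in $\Delta(G)$ --- only its abstract span does --- which matters when you invoke Neumann's theorem for two-generated subgroups. A genuine proof would have to reproduce the actual argument of \cite[Proposition 4.5]{DMSrestricted}: Baire category and Lemma \ref{comb1} to handle the values with entries in $T$, a BFC-type finiteness statement for the subgroup they generate, and then a separate argument to absorb the finitely many cosets of $T$ and the closure $H$ of $\Delta(G)$.
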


    Following the lines of \cite[Theorem 4.3]{DMSrestricted}, we have: 

\begin{theorem}\label{genN}
Assume that $G$ is a  profinite group  in which the centralizers of u$_k$-commutators  are either finite or open and $N$ is an infinite normal  nilpotent subgroup of $G$.
 Then the $\gamma_k^*$-residual of $N$ has finite commutator subgroup.  
\end{theorem}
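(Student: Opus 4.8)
The plan is to combine the description of the $\gamma_k^*$-residual furnished by Lemma~\ref{ts} with the containment $\U_k\cap N\subseteq \Delta(G)$, and then feed this into Proposition~\ref{X}. Write $R$ for the $\gamma_k^*$-residual of $N$. By Lemma~\ref{ts}, $R$ is the (closed) subgroup generated by the set $X=X_1\cup\dots\cup X_k$, where $X_i=\{[g_1,\dots,g_k]\mid g_i\in N,\ g_j\in G\}$ is the set of $\gamma_k$-values whose $i$-th entry lies in $N$. The whole strategy rests on proving that $X\subseteq\Delta(G)$; once this is established, each $X_i$ satisfies the hypothesis of Proposition~\ref{X}, and the conclusion will follow by an elementary quotient argument.

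First I would record that $\U_k\cap N\subseteq\Delta(G)$. Since $N$ is infinite and nilpotent, Lemma~\ref{center} shows that $Z(N)$ is infinite. If $y\in\U_k\cap N$, then $Z(N)\le C_G(y)$, so $C_G(y)$ is infinite; being the centralizer of a u$_k$-commutator, it is therefore open, whence $y$ has finitely many conjugates in $G$, i.e. $y\in\Delta(G)$. Next, as $N$ is a closed nilpotent normal subgroup it is pronilpotent, so Lemma~\ref{idea} applies: every element of $X$ is a product of $k$-boundedly many elements of $\U_k\cap N$. Since $\Delta(G)$ is an abstract subgroup, these products again lie in $\Delta(G)$, and we conclude $X\subseteq\Delta(G)$; in particular $X_i\subseteq\Delta(G)$ for each $i$.

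Now let $H=\overline{\Delta(G)}$ and put $K_i=\langle X_i\rangle$, so that $K_i\le H$ and $R=\langle K_1,\dots,K_k\rangle\le H$. Applying Proposition~\ref{X} for each $i\in\{1,\dots,k\}$ yields that $[H,K_i]$ is finite. Since $K_i\le H$, the subgroup $[H,K_i]$ is normal in $\langle H,K_i\rangle=H$, so the product $D=\prod_{i=1}^{k}[H,K_i]$ is a finite normal subgroup of $H$. In $H/D$ every image $K_iD/D$ is central, because $[H,K_i]\le D$; hence $RD/D\le Z(H/D)$ is abelian, which forces $[R,R]\le D$. As $D$ is finite, the commutator subgroup of $R$ is finite, as required.

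I expect the main obstacle to be the second step, namely promoting the single-element fact ``$\U_k\cap N\subseteq\Delta(G)$'' to the statement that \emph{every} generator of the residual lies in $\Delta(G)$. This is precisely where the u$_k$-commutator structure (rather than that of arbitrary $\gamma_k$-values) is exploited: without Lemma~\ref{idea}, which rewrites a commutator having one entry in $N$ as a product of u$_k$-commutators lying in $N$, there would be no reason for a typical element of $X_i$ to be an $FC$-element. Once $X\subseteq\Delta(G)$ is secured, Proposition~\ref{X} together with the passage to the quotient $H/D$ modulo the finite normal subgroup $D$ is routine.
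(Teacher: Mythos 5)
Your proposal is correct and follows essentially the same route as the paper: use Lemma~\ref{ts} to identify the residual as $\langle X_1\cup\dots\cup X_k\rangle$, use Lemma~\ref{center} and the centralizer hypothesis to get $\U_k\cap N\subseteq\Delta(G)$, promote this to $X_i\subseteq\Delta(G)$ via Lemma~\ref{idea}, and then apply Proposition~\ref{X} to conclude that $[H,\langle X_i\rangle]$ is finite and hence $R'$ is finite. Your final passage to the quotient $H/D$ is just a slightly more explicit version of the paper's observation that $R\le H$ and $R'\le[H,R]=\prod_i[H,\langle X_i\rangle]$.
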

\begin{proof}
 For $i=1, \dots n$, let $X_i$ be the set of $\gamma_k$-values   $[g_1,...,g_k]$ such that $g_i$ belongs to $N$. 
  It follows from Lemma \ref{idea} that every element  in  $X_i$ is a product of $k$-boundedly many elements in $\U_k\cap N$. 
    As $N$ is infinite, the center $Z(N)$ of $N$ is infinite as well, thus every element in $\U_k\cap N$ is an $FC$-element. 
     Therefore also 
  the set $X_i$  consists of $FC$-elements.
   It follows   from Proposition \ref{X} that $[H, \langle X_i \rangle]$ is finite for every $i$. 
  By Lemma \ref{ts},   the $\gamma_k^*$-residual of $N$ is the subgroup $R$ generated by the set $X= X_1 \cup \dots  \cup X_k $. 
 Thus $[H, R]= \prod_{i=1}^{k}[H,\langle X_i \rangle]$ is finite. Finally, note that $R\le H$ and so $R'\le [H,R]$ is also finite.
 \end{proof}

Now Theorem  \ref{main}  follows.

\begin{proof}	[Proof of Theorem \ref{main}]	
Assume that $G$ is a  profinite group  in which the centralizers of uniform $k$-step commutators  are either finite or open. 
 We proved in Proposition \ref{mainfirst} that $G$ is virtually nilpotent. Now we will show that $\gamma_k(G)$ is abelian-by-finite. 
Let $N$ be an open nilpotent subgroup of $G$. Of course we can assume that $N$ is infinite. By Theorem \ref{genN},  the $\gamma_k^*$-residual  $R$ of $N$ has finite commutator subgroup, thus it is virtually abelian.  Moreover, by Lemma \ref{concise} $R$ is open in $\gamma_k(G)$. Thus $\gamma_k(G)$ is virtually abelian and the proof is complete. 
\end{proof}

\section{Strong conciseness of uniform commutators}

A word $w$ is said to be concise in a class of groups $\mathcal X$ if $w(G)$ is finite whenever the set of $w$-values in $G$ is finite for a group $G\in\mathcal X$. In the sixties Hall raised the problem whether all  words are concise,
 but in 1989 S. Ivanov \cite{ivanov} solved the problem in the negative  (see also \cite[p.\ 439]{ols}). On the other hand, the problem for residually finite groups remains open (cf. Segal \cite[p.\ 15]{Segal} or Jaikin-Zapirain \cite{jaikin}). In recent years several  positive results with respect to this problem were obtained (see \cite{AS1, gushu,  fealcobershu, dms-conciseness, dms-bounded, DMS-engel-II}). 

A word $w$ is called boundedly concise in a class of groups $\mathcal X$ if whenever the set of its values is finite of size at most $m$ in a group $G\in \mathcal X$, it always follows that the
subgroup $w(G)$ is finite of order bounded by a function of $m$ and $w$.
  In \cite{fernandez-morigi} it is shown  that every word which is concise in the class of all groups is actually boundedly concise. 
 There is a conjecture that every word which is concise in residually finite groups is boundedly concise (cf. \cite{fealcobershu}) but this probably  will remain open for some time. 

 On the other hand, the multilinear commutator words and words implying virtual nilpotency are known to have this property \cite{dms-conciseness}. Recall that a word $w$ is said to imply virtual nilpotency if every finitely generated metabelian group $G$ where $w$ is a law 
  has a nilpotent subgroup of finite index. 
 It follows from Gruenberg's result \cite{gruen} that the Engel words imply virtual nilpotency, so they are boundedly concise. 

The main result in \cite{fernandez-morigi} states that if $w$ is a multilinear commutator word and the set of $w$-values in a group $G$ has size at most $m$, then  the verbal subgroup $w(G)$  is finite of order bounded by a function of $m$, independently of $w$. We will show that in the class of profinite groups the set $\U_k$ has the same property.

Recall that the $i$-th centre $Z_i(G)$ of a  group $G$ is defined inductively by 
 $Z_0(G)=1$  and $Z_i(G)/Z_{i-1}(G)=Z(G/Z_{i-1}(G))$ for $i\ge 1$.  The last term of the upper central series of a finite group $G$ will be denoted by  $Z_\infty(G)$. A classical result, due to Baer, states that if $Z_{i}(G)$ has finite index $t$ in $G$, then $\gamma_{i+1}(G)$ is finite, and its order is bounded by a function of $i$ and $t$ (see the proof of \cite[14.5.1]{rob}). Similarly, if $G$ is a finite group such that $[G:Z_\infty(G)]=t$, then the order of $\gamma_\infty(G)$ is $t$-bounded (see \cite{kos}).

\begin{proposition}\label{boundedly} 
 Let $G$ be a profinite group in which $|\U_k(G)|\le m$ for some positive integer $m$. Then  $|\gamma_k(G)|$ is $m$-bounded.
\end{proposition}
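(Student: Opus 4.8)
The plan is to bound the number of $\gamma_k$-values in $G$ and then invoke the bounded conciseness of multilinear commutator words from \cite{fernandez-morigi}, which converts a bound on $|G_{\gamma_k}|$ into a bound on $|\gamma_k(G)|$.

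First I would record that $\U_k$ is invariant under conjugation: if $g=[x_1,\dots,x_k]$ with $\pi(x_1)=\dots=\pi(x_k)$, then $g^h=[x_1^h,\dots,x_k^h]$ and $\pi(x_i^h)=\pi(x_i)$, so $g^h\in\U_k$. Hence for every $g\in\U_k$ the class $g^G$ is contained in $\U_k$, giving $|g^G|\le m$, so $C_G(g)$ has index at most $m$ in $G$. Since $\U_k$ generates $\gamma_k(G)$ by Lemma \ref{gen1}, an element centralizes $\gamma_k(G)$ exactly when it centralizes $\U_k$, whence
$$C_G(\gamma_k(G))=\bigcap_{g\in\U_k}C_G(g)$$
is an intersection of at most $m$ subgroups each of index at most $m$, so $[G:C_G(\gamma_k(G))]\le m^m$. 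As $C_G(\gamma_k(G))\le C_G(x)$ for every $x\in\gamma_k(G)$, it follows that every $\gamma_k$-value has at most $m^m$ conjugates in $G$. With this uniform bound, Theorem \ref{dadada} supplies an abstract nilpotent subgroup of $(k,m)$-bounded index and class; passing to the normal core of its closure, I obtain an open \emph{normal} pronilpotent subgroup $H$ of $G$ whose index $[G:H]=t$ and nilpotency class are $(k,m)$-bounded.

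The main step is to bound $|G_{\gamma_k}|$. Fix coset representatives $\rho_1,\dots,\rho_t$ of $H$ in $G$; there are at most $t^k$ choices for the cosets of the entries of a $\gamma_k$-value. Writing each entry as $a_i=\rho_{j_i}h_i$ with $h_i\in H$ and expanding $[a_1,\dots,a_k]$ by the basic commutator identities, one obtains a product of $k$-boundedly many conjugates of commutators $[c_1,\dots,c_k]$ with $c_i\in\{\rho_{j_i},h_i\}$. Every term in which some $c_i=h_i$ is a $\gamma_k$-value having an entry in the normal subgroup $H$ (conjugation keeps that entry inside $H$), so by Lemma \ref{idea} each such term is a product of $k$-boundedly many elements of $\U_k\cap H$, a set of size at most $m$; the single remaining term is a conjugate of $[\rho_{j_1},\dots,\rho_{j_k}]$, which by the previous paragraph has at most $m^m$ conjugates. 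Thus every $\gamma_k$-value is a product of $k$-boundedly many elements of the finite set
$$S=\U_k\ \cup\ \bigl\{[\rho_{j_1},\dots,\rho_{j_k}]^{\,g} : g\in G,\ 1\le j_1,\dots,j_k\le t\bigr\},$$
whose cardinality $|S|\le m+t^k m^m$ is $(k,m)$-bounded. Consequently $|G_{\gamma_k}|$ is $(k,m)$-bounded.

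Finally, since $\gamma_k$ is a multilinear commutator word and $|G_{\gamma_k}|$ is $m$-bounded, the bounded conciseness result of \cite{fernandez-morigi} bounds $|\gamma_k(G)|$ by a function of $|G_{\gamma_k}|$, hence by a function of $m$; a finite verbal subgroup is closed, so this is the topological $\gamma_k(G)$ as well. I expect the delicate point to be the bookkeeping in the expansion step: one must verify that the ``leading'' all-$\rho$ term ranges over a bounded set—this is precisely where the bound on the number of conjugates of a $\gamma_k$-value is used—while every mixed term is absorbed into $\U_k\cap H$ through Lemma \ref{idea}.
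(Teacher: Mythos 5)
Your argument is sound step by step, but it proves a weaker statement than the one claimed. In this paper ``$m$-bounded'' is used in deliberate contrast to ``$(k,m)$-bounded'' (see the convention stated before Theorem \ref{dadada}, and the remark in the introduction that the order of $\gamma_k(G)$ is bounded in terms of the number of u$_k$-commutators alone), so the proposition asserts a bound depending only on $m$, uniformly in $k$. Your bound is intrinsically $(k,m)$-dependent, and this cannot be removed from your route: Theorem \ref{dadada} only yields a nilpotent subgroup whose index $t$ and class depend on $k$ as well as $m$; Lemma \ref{idea} produces a number of factors that is $k$-bounded; and your set $S$ has size of order $t^k m^m$. So even though the final appeal to \cite{fernandez-morigi} is uniform in the word, its input $|G_{\gamma_k}|$ is only $(k,m)$-bounded, hence so is your conclusion. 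Apart from this, the steps check out: the conjugation-invariance of $\U_k$, the bound $[G:C_G(\gamma_k(G))]\le m^m$, the passage to an open normal nilpotent $H$ of bounded index, the coset expansion with Lemma \ref{idea} absorbing every mixed term into $\U_k\cap H$, and the observation that the finite abstract verbal subgroup is closed.

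The paper obtains the $k$-uniform bound by a different mechanism, which avoids Theorem \ref{dadada} entirely. From the permutation action of $G$ on $\U_k(G)$ one gets $[G:C_G(\gamma_k(G))]\le m!$, so by Schur's theorem $\gamma_k(G)'$ has $m$-bounded order; one may thus assume $\gamma_k(G)$ abelian and argue by induction on $m$. If $G$ is pronilpotent, every $\gamma_k$-value lies in $\U_k(G)$, so $|G_{\gamma_k}|\le m$ and the word-independent conciseness theorem \cite[Theorem A]{fernandez-morigi} finishes. Otherwise there is a Sylow $p$-subgroup $P$ of $\gamma_k(G)$ and a $p'$-element $a$ with $[P,a]\ne 1$; by Lemma \ref{bul2}, $[P,a]\subseteq\U_k(G)$, so choosing $1\ne x\in[P,a]$ gives a normal subgroup $\langle x^G\rangle$ of order at most $m^m$, and passing to $G/\langle x^G\rangle$ strictly decreases the number of u$_k$-commutators. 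Every step costs a factor depending only on $m$, so the total bound is a function of $m$ alone. If all you need is finiteness of $\gamma_k(G)$ (as in the proof of Theorem \ref{strong}), your argument suffices; to prove the proposition as stated you need an argument of this uniform kind.
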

\begin{proof}  As $\gamma_k(G)$ is generated by the set $\U_k(G)$ on which $G$ acts by conjugation, it follows that $[G:C_G(\gamma_k(G)]\le m!$. Thus $Z(\gamma_k(G))$ has $m$-bounded index in $\gamma_k(G)$ and, by Schur Theorem, $\gamma_k(G)'$ has $m$-bounded order (see \cite[4.12]{robinson2}).          We may pass to the quotient $G/\gamma_k(G)'$ and, without loss of generality, assume that $\gamma_k(G)$ is abelian. Now we will prove that $|\gamma_k(G)|$ is finite and $m$-bounded by induction on $m$, the case $m=1$ being trivial. If $G$ is pronilpotent, then every $\gamma_k$-value is contained in $\U_k(G)$, so we can conclude by \cite[Theorem A]{fernandez-morigi}.
Suppose that $G$ is not pronilpotent. Then there exists a Sylow $p$-subgroup $P$ of $\gamma_k(G)$, for some prime $p$,  and a $p'$-element $a$ such that $[P,a]\ne 1$. 
 Since $P$ is abelian $[zy,a]=[y,a][z,a]$ for each $y,z\in P$, thus $[P,a]=\{[y,a]\mid y\in P\}$. It follows from Lemma \ref{bul2} that $[P,a]\subseteq \U_k(G)$ and  so it  has order at most $m$. Choose a nontrivial element $x\in [P,a]$. Then $x$ has order at most $m$ and $[G:C_G(x)]\le m$. Hence $|\langle x^G\rangle|$ has order at most $m^m$. Now we can pass to the quotient $G/\langle x^G\rangle$ and conclude by induction on $m$.
\end{proof}

As mentioned in the introduction 
 Theorem \ref{main} enables us to prove that if $G$ is a profinite group such that the cardinality of the set $\U_k(G)$ is less than $2^{\aleph_0}$, then $\gamma_k(G)$ is finite. 
Our proof relies on the fact that multilinear commutator words are strongly concise  \cite{dks}.  

The next lemma will be useful.

\begin{lemma}\cite[Lemma 2.2]{dks}\label{conjfinite} Let $G$ be a profinite group and let $x\in G$. If the conjugacy
class $x^G$ contains less than $2^{\aleph_0}$ elements, then it is finite.\end{lemma}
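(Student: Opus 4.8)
The plan is to prove Lemma \ref{conjfinite} by exploiting the compactness of $G$ together with the fact that the conjugacy class $x^G$ is the continuous image of $G$ under the conjugation map. First I would note that $x^G$ is in natural bijection with the coset space $G/C_G(x)$, since the map $g \mapsto x^g$ induces a bijection between the right cosets of $C_G(x)$ and the conjugates of $x$. Because $C_G(x)$ is a closed subgroup of the profinite group $G$, the quotient $G/C_G(x)$ is a profinite space (a compact, Hausdorff, totally disconnected topological space), and the conjugation map factors through a continuous bijection $G/C_G(x) \to x^G$; in fact $x^G$ inherits the structure of a profinite space of the same cardinality as $G/C_G(x)$.

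The key step is then a cardinality dichotomy for profinite spaces: a nonempty profinite space is either finite or has cardinality at least $2^{\aleph_0}$. The reason is that if $G/C_G(x)$ is infinite, then $C_G(x)$ has infinite index in $G$, so there is an infinite descending chain of open subgroups of finite index containing $C_G(x)$; equivalently, as an inverse limit of finite discrete coset spaces the space $G/C_G(x)$ maps onto arbitrarily large finite sets. From here one constructs, by the usual inverse-limit or tree argument, a continuous surjection onto the Cantor set $\{0,1\}^{\mathbb{N}}$, forcing $|G/C_G(x)| \ge 2^{\aleph_0}$. Concretely, one selects an infinite strictly decreasing chain of open normal subgroups and at each level splits a surviving coset into at least two, producing an embedded copy of $\{0,1\}^{\mathbb{N}}$.

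Combining the two observations yields the result directly: if $|x^G| < 2^{\aleph_0}$, then $|G/C_G(x)| < 2^{\aleph_0}$, so by the dichotomy $G/C_G(x)$ cannot be infinite, whence it is finite and therefore $x^G$ is finite as well. The main obstacle, and the only genuinely non-routine point, is the careful justification of the cardinality dichotomy for the profinite space $G/C_G(x)$ — that is, verifying that an infinite quotient of this form really does contain a copy of the Cantor set and hence has cardinality at least $2^{\aleph_0}$. Everything else reduces to the standard correspondence between a conjugacy class and a coset space and to elementary properties of closed subgroups of profinite groups.
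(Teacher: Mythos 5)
Your overall strategy --- identifying $x^G$ with the coset space $G/C_G(x)$ of the closed subgroup $C_G(x)$ and reducing to a cardinality statement about that space --- is the right one; note that the paper itself does not prove this lemma but quotes it from \cite{dks}, where the argument runs along exactly these lines. However, your treatment of the key step, which you yourself single out as the only non-routine point, has a genuine gap. The dichotomy as you state it is false: a nonempty profinite space need not be either finite or of cardinality at least $2^{\aleph_0}$. A convergent sequence $\{0\}\cup\{1/n \mid n\ge 1\}$ is compact, Hausdorff and totally disconnected, hence a profinite space, yet countably infinite. Correspondingly, your proposed justification breaks down: an inverse limit of finite sets that ``maps onto arbitrarily large finite sets'' need not surject onto the Cantor set. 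Take $X_n=\{0,1,\dots,n\}$ with bonding maps $k\mapsto\min(k,n)$: the levels are arbitrarily large, but at each level only \emph{one} point splits into two, and the limit is the countable space above. This is precisely the situation that your phrase ``splits a surviving coset into at least two'' permits, and it does not produce a copy of the Cantor set.

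What rescues the argument is the group structure, which you never invoke at this point. If $U\supseteq V\supseteq C_G(x)$ are open subgroups, then \emph{every} fibre of the natural map $G/V\to G/U$ has exactly $[U:V]\ge 2$ elements, since each coset of $U$ is a disjoint union of $[U:V]$ cosets of $V$; it is this uniform splitting along a strictly decreasing chain $U_1\supsetneq U_2\supsetneq\cdots\supseteq C_G(x)$ (such a chain exists because the open subgroups containing $C_G(x)$ form a family closed under finite intersections whose intersection is $C_G(x)$, so their indices are unbounded when $[G:C_G(x)]$ is infinite) that yields a full binary tree of cosets, whose $2^{\aleph_0}$ branches are all realized as points of $G/C_G(x)$ by compactness. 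Alternatively, and more cleanly, one can observe that $G/C_G(x)$ is a \emph{homogeneous} compact space: if it were infinite it would have no isolated points, and a nonempty perfect compact Hausdorff space has cardinality at least $2^{\aleph_0}$. Either repair closes the gap, and with it your proof becomes the standard argument behind the cited lemma; as written, though, the crucial step rests on a false general claim about profinite spaces.
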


\begin{proof}[Proof of Theorem \ref{strong}] 
It is enough to prove that if $G$ is a profinite group such that the cardinality of the set of {\rm u}$_k$-commutators in $G$ is less than $2^{\aleph_0}$ then $\gamma_k(G)$ is finite. Under this assumption,  the conjugacy class $x^G$  of every {\rm u}$_k$-commutator $x$ is finite, by Lemma \ref{conjfinite}. Thus all {\rm u}$_k$-commutators are $FC$-elements and  $G$ is virtually nilpotent,  by Proposition \ref{mainFC}. 
Let $N$ be an open nilpotent normal subgroup of $G$. 
If $g\in G$, Lemma \ref{conj} implies that the  cardinality of the set of $\gamma_k$-values  in $N\langle g\rangle$ is less than $2^{\aleph_0}$. 
Taking into account that  multilinear commutator words are strongly concise  we conclude that $\gamma_k(N\langle g\rangle)$ is finite.  
  Choose a transversal $g_1,\dots,g_s$ of $N$ in $G$. 
As each $\gamma_k(N\langle g_i\rangle)$ is normalized by $N$, its normal closure $N_i$ is finite. Thus we can pass to the quotient over the finite normal subgroup $N_1\cdots N_s$ and assume that $\gamma_k(N\langle g_i\rangle)=1$ for $i=1,\dots,s$. Now, if $x\in N$,  the commutator $[x,{}_{k-1} g]$  is trivial  for each $g\in G$, that is, $x$ is a right Engel element. Therefore for every finite quotient $\bar G$  of $G$ the image $\bar N$ of $N$ is contained in $Z_{\infty}(\bar G)$ (see for instance \cite[12.3.7]{rob}). It follows that $\gamma_\infty(\bar G)$ has $s$-bounded order. As this happens for every finite quotient  of $G$,  $\gamma_\infty(G)$  is finite. Without loss of generality, we can pass to the quotient over  $\gamma_{\infty}(G)$ and assume that $G$ is pronilpotent. In this case, every $\gamma_k$-value is a {\rm u}$_k$-commutator. Since  multilinear commutator words are strongly concise, it follows  
  that $\gamma_k(G)$ is finite. This concludes the proof.
\end{proof}


\end{document}